\def\ind{\operatorname{ind}}
\def\ord{\operatorname{ord}}
\def\vol{\operatorname{vol}}
\DeclareMathOperator\tr{Tr}
\DeclareMathOperator\Tr{Tr}
\def\dim{\operatorname{dim}}
\def\codim{\operatorname{codim}}
\def\crit{\operatorname{Crit}}
\def\supp{\operatorname{supp}}
\def\graph{\operatorname{graph}}
\def\Hess{\operatorname{Hess}}
\newtheorem{theorem}{Theorem}
\newtheorem{proposition}[theorem]{Proposition}
\newtheorem{corollary}[theorem]{Corollary}
\newtheorem{lemma}[theorem]{Lemma}
\theoremstyle{definition}
\newtheorem{remark}[theorem]{Remark}
\newtheorem{definition}[theorem]{Definition}
\title[Analytic and algebraic indices of elliptic operators]{Analytic and algebraic indices of elliptic operators associated with discrete groups of quantized canonical transformations}
\author{Anton Savin, Elmar Schrohe}
\date{}
\begin{document}
\begin{abstract} 
We consider elliptic operators associated with discrete groups of quantized canonical transformations. 
In order to be able to apply results from algebraic index theory, 
we define the localized algebraic index of the complete symbol of an elliptic operator.
With the help of a calculus of semiclassical quantized canonical transformations, a version of Egorov's theorem and a theorem on trace asymptotics for 
semiclassical Fourier integral operators we show that the localized analytic index and the localized algebraic index coincide.
As a corollary, we express the Fredholm index in terms of  the algebraic index for a wide class of groups, in particular, for finite extensions of Abelian groups.

\end{abstract}

\maketitle

\tableofcontents

\section{Introduction}

Given a representation of a group $G$ in a space of functions on a manifold $M$, there is an associated class of operators, called $G$-operators in the sequel,
generated by the operators in the representation and the pseudodifferential operators on $M$. This framework is quite broad and encompasses many interesting 
classes of operators. 
The case of shift operators, where the action of $G$ is induced by diffeomorphisms on $M$, 
is very well studied and goes back to the paper \cite{Carl1}  by  Carleman, where  he considers elliptic boundary value problems with nonlocal boundary conditions
associated with an involution of the boundary and reduces the problem to treating 
$G$-operators on the boundary. 
Later, the ellipticity and Fredholm property for  $G$-operators associated with general 
groups $G$ were studied by Antonevich and his coauthors, 
see \cite{AnLe1} and the references cited there.
Alain Connes' noncommutative geometry provided interesting classes of $G$-operators and efficient methods for studying them, 
see e.g. \cite{Con4,CoDu,CoLa,CoMo,DaSi,LaSu,Mos,Per1}). 
Index formulas were obtained for the actions of finite groups \cite{Ant4}, 
for isometric actions \cite{NaSaSt17,SSS1}, and for general nonisometric actions 
\cite{SaSt1,Per1}. Finally, we mention that $G$-operators have also been studied on manifolds with additional structures,
for instance,  on contact manifolds \cite{PeRo1}   and on manifolds with singularities   \cite{SaSt42}. 

In a previous joint work with Boris Sternin \cite{SSS2,SSS3} we introduced a class of 
operators associated with a representation of   $G$ by {\em quantized canonical transformations}. 
The main geometric difference between this situation and the case of shift operators is that 
here the operators are associated with an action of $G$ on $T_0^*M$, the cotangent bundle  
without the zero section, by homogeneous canonical transformations, whereas in the case of shift operators we had a group action on the base manifold.
This framework includes the theory for shift operators as a special case, but it also features 
new interesting examples: metaplectic operators,  operators associated with the wave group on 
Riemannian manifolds or boundary value problems for hyperbolic equations with conditions 
on the entire boundary, cf. \cite{BaSt1}.

In \cite{SSS2,SSS3}, we defined symbols for $G$-operators associated with  
finite-dimensional Lie groups and showed the Fredholm 
property for elliptic operators. 
For discrete groups, ellipticity requires the invertibility of the principal symbol in the 
maximal $C^*$-crossed product $C(S^*M)\rtimes G$.  
There remains the problem of computing the index of such 
operators. This is a rather difficult task; for instance, it 
contains as a special case the Atiyah--Weinstein problem of determining the 
index of quantized canonical transformations, which was solved
in \cite{EpMe1,LNT1}. 

We intend to tackle the index problem for operators associated with groups of quantized canonical transformations using the approach of algebraic 
index theory, see e.g. \cite{Fds4,Fds7,Fds16,NeTs1,NeTs2,NeTs3,PPT1,GKN1} 
and the references cited there. 
In this framework the problem of computing the Fredholm index is first reduced
to a problem of computing the so-called {\em algebraic index}, defined in terms of the complete symbols of the operators 
(in some sense this is a passage from the index in analysis to the index in algebra) and then
computing the algebraic index in terms of the principal symbol of the operator and the characteristic classes of the manifold (passage from the 
index in algebra to the index in topology). 

In this article we present a realization of the above described passage from the Fredholm index to the algebraic index. 
Let us mention the main problems we met.
First, unlike earlier applications of algebraic index theory to pseudodifferential operators, where
complete symbols are always considered on the entire cotangent bundle $T^*M$, when dealing with quantized canonical transformations, 
we have to exclude the zero section, since homogeneous canonical transformations in general have singularities at the zero section.
In the case of a single canonical transformation, Leichtnam, Nest and Tsygan 
\cite{LNT1} handled the problem by a gluing construction. With several transformations, 
this is no longer possible.  

Second, to formulate the problem in terms of complete semiclassical symbols, we also
need to convert quantized canonical transformations in our representation of the group $G$ to semiclassical quantized canonical transformations, 
while in some sense preserving
the main properties of the representation. 
We show that this is possible at least at the level of complete symbols. 
Let us remark that although there is a vast literature on semiclassical methods, see e.g.\cite{BlFo1,GuSt4,LS1,Mart1,MSS1,NOSS1,NScS12,Rob1,Zwo1},
we could not find there the  composition formulas or Egorov's theorem as needed in our situation.

We now describe the contents of the paper in more detail.
Sections \ref{sec2}--\ref{sec5}  are devoted  to the results in semiclassical analysis needed for applications  to $G$-operators. 
They might also be of independent interest.  We start Section~\ref{sec2} by recalling the main properties of semiclassical pseudodifferential operators.
Then we introduce quantized canonical transformations, define their semiclassical deformations and establish the main properties of such deformations 
(estimates in Sobolev spaces, invariance with respect to the choice of the phase function, composition formula). 
A semiclassical version of Egorov's theorem \cite{Ego1} with improved remainder estimates is obtained in Section~\ref{sec4}. In the next section, we obtain asymptotic expansions as $h\to 0$ of the operator traces of semiclassical quantized canonical transformations  under certain transversality conditions  (Theorem~\ref{lem3}). 
It yields as special cases  the expansions in \cite{StSh78,GuSt4}. In Section~\ref{sec6},  we apply the results of the previous sections to elliptic $G$-operators, where $G$ is a discrete group of quantized canonical transformations. 
To this end, we recall the definition of ellipticity for such operators and define, 
for each element $g\in G$, an analytic index localized at $g$. Then we use the results in Section~\ref{sec2} to define a semiclassical deformation of such operators.  Egorov's theorem shows that complete symbols of such operators are elements of the crossed product of the algebra  of semiclassical symbols and the group $G$. This enables us   to define traces localized at conjugacy classes in $G$ on the above mentioned  crossed product of semiclassical symbols in terms of the asymptotic expansion of the operator trace of the corresponding semiclassical operators. 
Given the crossed product algebra of  symbols and a trace on it, we define the algebraic index for elliptic (invertible) 
elements in the crossed product algebra (Definition~\ref{algind1}). Finally, we prove that the analytic index of an elliptic operator 
is equal to the algebraic index of its symbol (Theorem~\ref{th-1}), which is the main result of this paper. 
As a corollary, this result gives a formula for the Fredholm index as a sum of localized algebraic indices. 
Let us make two final remarks. First, we mention that we work throughout the paper with algebraic crossed products with $G$, 
since the main result  of this paper is mainly algebraic.
Second, using similar techniques, one can study  more general  analytic and algebraic indices associated
with cyclic cocycles over the group algebra of $G$ (cf.~\cite{Per1}).

The authors are grateful to M.~Doll, A.~Gorokhovsky, V.~Nazaikinskii, R.~Nest,  T.~Schick, 
and R.~Schulz  for useful discussions.
This work was partially supported by Deutsche Forschungsgemeinschaft, grant SCHR 319/8-1, RFBR, grant 16-01-00373a, and RUDN University  program 5-100.

\section{Semiclassical Quantized Canonical Transformations}\label{sec2}
\paragraph{\bf Semiclassical symbols.}

Let us first recall some facts regarding semiclassical symbols and operators (see \cite{MaFe1,BlFo1,NeTs3}); see also the books \cite{Mart1,GuSt4,Rob1,NScS12}.

Let $M$ be an $n$-dimensional 
closed manifold. 
We write $T^*_0M= T^*M\setminus \{0\}$ for the cotangent bundle of $M$ with the zero section removed; 
it carries the natural symplectic form $\omega = \sum_{j=1}^n dx^j\wedge d\xi_j$
for the standard coordinates $(x,\xi)$ of $T^*M$.
 
A  {\em semiclassical symbol} $a=a(x,\xi,h)$  of order $m$ in  a chart in $T^*M$ with coordinates $(x,\xi)$
is a smooth family of symbols  
with parameter $h\in  {\mathbb{R}}_+$, which has an asymptotic expansion 
\begin{equation}\label{eq-2}
a(x,\xi,h)\sim \sum_{j\ge 0} h^j a_j(x,\xi), \quad \text{as $h\to 0$},
\end{equation}
with $a_j(x,\xi)\in S^{m-j}$ in the sense  that, for all $N\ge 0$, we have
$$
 h^{-N}\Bigl(a(x,\xi,h)- \sum_{0\le j< N} h^j a_j(x,\xi)\Bigr) \longrightarrow 0 \quad \text{in }S^{m-N}\text{ as }h\to 0.
$$
The term $a_0\in S^{m}$ is called the {\em leading symbol} of $a$.

We shall identify semiclassical symbols with the same expansion \eqref{eq-2}.

Given two semiclassical symbols $a$ and $b$, their star product is defined by the formula 
\begin{equation}\label{eq-star1}
 a*b\sim\sum_{k,l,|\alpha|\ge 0} \frac{h^{k+l+|\alpha|}}{\alpha !} \partial^\alpha_x a_k(x,\xi) D^\alpha_\xi b_l(x,\xi),
 \quad \text{ where } D^\alpha_\xi=(-i)^{|\alpha|}\partial^\alpha_\xi.
\end{equation} 
This product is associative.

To define the semiclassical symbols on a manifold, we need  to describe how semiclassical symbols transform
under a change of coordinates.
Given another coordinate system $(y,\eta)$ and a change of coordinates 
$$
(y,\eta)=\partial f (x,\xi) \qquad \text{or}\qquad y=f(x),  \eta=\left(\left(\frac{ \partial f}{\partial x}\right)^t\right)^{-1}\xi,
$$
the semiclassical symbol \eqref{eq-2} in the new coordinate system is written as 
\begin{equation}\label{eq-semicl1}
 b(y,\eta,h)=(\partial f^{-1})^* (\mathcal{L} a(x,\xi,h)),
\end{equation}
where 
$$
\mathcal{L} a(x,\xi,h)=a+\sum_{|\alpha|\ge 2}\frac{(-ih)^{|\alpha|}}{\alpha!}
  \left(\frac{\partial^\alpha}{\partial t^\alpha}\exp\frac{i\langle\xi, (f'_x)^{-1}(f(x+t)-f(x)-f'_x t)\rangle}{h}
  \right)\Bigr|_{t=0}\frac{\partial^\alpha a}{\partial \xi^\alpha}.
$$
A {\em semiclassical symbol} on  $M$ is a collection of semiclassical symbols in local charts such that the  symbol 
$a$ in the chart with coordinates $(x,\xi)$ and the symbol $b$ in the chart with coordinates $(y,\eta)$ are related as in
Eq.~\eqref{eq-semicl1} on the intersection of the two charts.
Denote the space of semiclassical symbols of orders $\le 0$ on   $M$ by $\mathbb{A}$. The product \eqref{eq-star1} is compatible with
the change of variables formula \eqref{eq-semicl1} and therefore $\mathbb{A}$ is an associative algebra with respect to the star product.
This algebra is filtered by the ideals $\mathbb{A}_N\subset \mathbb{A}$, which consist of semiclassical symbols \eqref{eq-2}
such that the summation starts with $j=N$.

Let  $\mathbb{A}'\subset \mathbb{A}$ be the ideal of semiclassical symbols which vanish in a neighborhood of the zero section in $T^*M$. More precisely, a symbol
$a\in \mathbb{A}$  is in $\mathbb{A}'$, if, for each $N\ge 1$, 
there exists a neighborhood $U$  of the zero section such that the first $N$ 
components of $a$ in the expansion \eqref{eq-2} are equal to zero in $U$.
We again have a filtration by the ideals $\mathbb{A}'_N\subset \mathbb{A}'$, where the summation starts with $j=N$.

\paragraph{\bf Semiclassical pseudodifferential operators.}
The definition of the quantization mapping  $Op_h$, which  takes semiclassical  symbols to pseudodifferential operators, is the same
as in the classical theory of pseudodifferential operators. 
Choose a partition of unity $\sum_k \chi_k(x)\equiv 1$ subordinate to an atlas of charts on $M$ and cut-off functions $\psi_k(x)$
supported in these charts such that $\chi_k\psi_k=\chi_k$.
Given a semiclassical symbol $a\in\mathbb{A}$, the corresponding  semiclassical pseudodifferential operator is given by
\begin{equation}\label{eq-q1}
 Op_h(a)=\sum_k \psi_k A_k \chi_k, 
\end{equation}
where $A_k$  is the semiclassical quantization in local coordinates defined by the oscillatory integral
$$
 A_ku(x)=\frac{1}{(2\pi h)^n} \iint e^{\frac{i}{h}(x-y)\xi}a(x,\xi,h)u(y)dyd\xi.
$$
We identify two semiclassical pseudodifferential operators if their difference is  of order $-\infty$ in the scale of Sobolev spaces for all $h\in (0,1]$ and  its norm in any pair of Sobolev spaces tends to zero  faster than any power of $h$ as $h\to 0$. Denote by   $\Psi_h(M)$ the space of {\em semiclassical pseudodifferential operators} defined as the quotient of the range of this quantization mapping in the space of operator families with parameter $h\in (0,1]$ acting on $C^\infty(M)$  under the above identification. 

It can be shown that semiclassical pseudodifferential operators form an algebra under composition of operator families and the
quantization mapping induces an isomorphism
\begin{equation}\label{eq-iso1}
 Op_h:\mathbb{A}\longrightarrow \Psi_h(M)
\end{equation}
between the spaces of semiclassical symbols and  operators. This isomorphism is independent of the choice of  atlas, 
partition of unity and cut-off functions.

\paragraph{\bf Quantized Canonical Transformations.}

A homogeneous canonical transformation is a symplectomorphism 
$$C:T^*_0M\longrightarrow T^*_0M,
$$
which is homogeneous of degree one in the fiber. 
Alternatively, we may ask that $C$ preserves the canonical one-form $ \sum_{j=1}^n \xi_j dx^j$. 

The graph of $C$, i.e. the set 
$$\text{\rm graph}\, C = \{(x,\xi; x',\xi')\in T^*_0M\times T^*_0M\mid (x,\xi) = C(x',\xi')\}$$
is a Lagrangian submanifold of $ T^*_0M\times T^*_0M$.
Given a point $(x_0,\xi_0; x'_0,\xi'_0)\in \text{\rm graph}\,C$  we find neighborhoods 
$U_{x_0}$ of $x_0$ and $U_{x'_0}$ of $x'_0$ as well as an open conic subset 
$\Gamma\subset \mathbb R^d$ 
and a nondegenerate phase function%
\footnote{ i.e., $\varphi$ is real-valued, homogeneous of degree 1 in $\theta$,
$d_{(x,x',\theta)}\varphi\ne 0$ and the
differentials $d_{(x,x',\theta)}(\partial_{\theta_j}\varphi)$, $j=1,\ldots,d$,  are linearly independent whenever 
$\partial_\theta \varphi=0$. }
$\varphi: U_{x_0}\times U_{x'_0} \times \Gamma\longrightarrow \mathbb{R}$ such that the map
\begin{eqnarray}\label{eq-a1} 
\alpha: \text{\rm Crit}_\varphi \to \text{\rm graph}\, C; \quad 
\alpha (x,x',\theta) = (x,\partial_x\varphi(x,x',\theta), x', -\partial_{x'}\varphi(x,x',\theta))
\end{eqnarray}
is a diffeomorphism from the set 
\begin{equation}\label{eq-crit1}
 \crit_\varphi=\{(x,x',\theta)\;|\; \partial_\theta\varphi(x,x',\theta)=0\}
\end{equation} 
of critical points of $\varphi$ to a conical neighborhood of $(x_0,\xi_0;x'_0,\xi'_0)$ on $\text{\rm graph}\, C$.

An operator 
$$\Phi:C^\infty(M)\to C^\infty(M)
$$ 
is called a {\em quantized canonical transformation}, if 
microlocally,  in conical neighborhoods of the points $(x_0,\xi_0)$ and $(x'_0,\xi'_0) = C^{-1}(x_0,\xi_0)$, 
the operator $\Phi$  has an integral kernel that can be written as an oscillatory integral
\begin{equation}\label{ker1}
K^\Phi(x,x')=\int e^{i\varphi(x,x',\theta)} b(x,x',\theta) d\theta,
\end{equation}
modulo smooth kernels, where 
\begin{itemize}
\item $\varphi(x,x',\theta)$ is a nondegenerate phase function as described above.
\item  $b\in S^{(n-d)/2}$ is an amplitude function, which vanishes outside a small conical  neighborhood of $\crit_\varphi$ as well as  for small $|\theta|$ and has an asymptotic expansion 
$$
b(x,x',\theta)\sim \sum_{j\ge 0} b_j(x,x',\theta)  \text{ as $|\theta|\to \infty$},
$$
with $b_j(x,x',\theta)$ homogeneous of degree $(n-d)/2-j$ in $\theta$. 
\end{itemize}  

\begin{remark}\label{regularization}
The phase function $\varphi$ is assumed to be homogeneous of degree $1$ in
$\theta$ and therefore might be singular near $\theta=0$. However, since $b$ 
vanishes near $\theta=0$ we may as well assume $\varphi$ to be a classical symbol in 
$S^1$.    
\end{remark}

\paragraph{\bf Semiclassical quantized canonical transformations.}

Given  a quantized canonical transformation $\Phi$ with its Schwartz kernel microlocally written in the form \eqref{ker1}
and  $\varepsilon >0$, $N\in \mathbb{N}$, we define the semiclassical Fourier integral operator $\Phi_{h,\varepsilon,N}$ associated with $\Phi$ as the operator with the integral  kernel
\begin{equation}\label{ker2}
K^\Phi_{h,\varepsilon,N}(x,x') = h^{-d/2-n/2}\int e^{\frac i h \varphi(x,x',\theta)} \sum_{0\le j< N}h^j b_j(x,x',\theta) \chi (x,x',\theta) d\theta,
\end{equation}
where the smooth function $\chi$ is chosen such that, with 
$\alpha$ defined in \eqref{eq-a1},
\begin{eqnarray}\label{chi}
\mbox{\ \ \ \ \ \ \ } \chi(x,x',\theta)=
 \left\{
  \begin{array}{ll}
   1 & \text{in an open neighborhood of  }\alpha^{-1}(T^*_0M\times\{|\xi'|\ge\varepsilon\}),\\ 
  & \text{which is conic at infinity,} \vspace{2mm}\\
   0 & \text{in a small neighborhood of the zero section}.   
  \end{array}
 \right.
\end{eqnarray}
From now on we fix some Riemannian metric on $M$ which allows us to define $|\xi'|$. We call a subset $U$ of $M\times M\times \mathbb R^d$ conic at infinity, 
if  there exists an $R>0$ such that $(x,x',\lambda \theta)\in U $ whenever $(x,x',\theta) \in U$, $|\theta|\ge R$,  and $\lambda\ge 1$. 
By possibly shrinking the domain slightly, we can (and will) assume that $\chi\equiv 1$ for large $|\theta|$.

It follows from the homogeneity of the phase function and the $b_j$ that 
\begin{eqnarray*}
K^\Phi_{h,\varepsilon,N}(x,x') 
&=&h^{(d-n)/2}\int e^{i \varphi(x,x',\theta)} \sum_{0\le j< N}h^j b_j(x,x',h\theta) 
\chi (x,x',h\theta) d\theta\\
&=&\int e^{i \varphi(x,x',\theta)} \sum_{0\le j< N} b_j(x,x',\theta) 
\chi (x,x',h\theta) d\theta.
\end{eqnarray*}

\paragraph{\bf Boundedness results.}

We recall the semiclassical Sobolev spaces:
\begin{definition}
\label{Hsh}
The space $H_h^s(M)$ is the set of all distributions $u$ on $M$
such that
$$\|u\|_{H^s_h}= \|(h^2\Delta +1)^{s/2}u\|_{L^2}<\infty,$$ 
where $\Delta$ stands for the nonnegative Laplacian on $M$.
\end{definition}

\begin{lemma}\label{mapping}
Let $\varphi$ and $\chi$ be as before, and let $p(h;\cdot,\cdot, \cdot)$, $0<h\le 1$,  
be a bounded family of symbols in  $ S^{(n-d)/2-N}$ for some  $N\in \mathbb N_0$.  
Denote by $\Psi_h$ the operator given  by the Schwartz kernel  
\begin{eqnarray*}
K^\Psi_h (x,x') =  \int e^{ i\varphi(x,x',\theta)}
p(h;x, x',\theta) \chi(x,x',h\theta)\, d\theta.
\end{eqnarray*}
Then 
$$\Psi_h: H^s_h(M) \to H^{s+N}_h(M)$$
is bounded for every $s\in \mathbb R$, and
\begin{eqnarray}\label{h-estimate}
\|\Psi_h\|_{\mathcal B(H^{s}_h,H^{s+N}_h)} = O(h^N).
\end{eqnarray}
\end{lemma}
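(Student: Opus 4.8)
The plan is to reduce everything to a model estimate near the critical set and then exploit the oscillation in $\theta$ to gain powers of $h$. First I would use the cut-off $\chi(x,x',h\theta)$ to split the integration into two regions. In the region $|\theta|\lesssim 1/h$ the factor $\chi(x,x',h\theta)$ is harmless, but $p$ has order $(n-d)/2-N$, so the $\theta$-integral of $|p|$ over $|\theta|\lesssim 1/h$ contributes a factor of size $h^{-(n-d)/2+N-d}=h^{N-(n+d)/2}$ after accounting for the $d$-dimensional volume — which, after the usual $L^2\to L^2$ normalization for Fourier integral operators (the $h^{(d-n)/2}$ or equivalently the half-density weight), turns into a gain of $h^N$. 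In the region $|\theta|\gtrsim 1/h$ we have $\chi\equiv 1$, so there the kernel is exactly the semiclassical FIO kernel $h^{(d-n)/2}\int e^{\frac ih\varphi}p(h;x,x',h\theta)\,d\theta$ without cut-off, and the nondegeneracy of $\varphi$ together with the rapid decay coming from $\theta$ large (where $p$ decays like $|\theta|^{(n-d)/2-N}$) gives both boundedness and the $O(h^N)$ bound.

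The cleaner route, and the one I would actually carry out, is: conjugate $\Psi_h$ by the semiclassical Laplacian weights so that it suffices to prove $H^s_h\to H^{s+N}_h$ boundedness for $s=0$, i.e. $H^0_h=L^2\to H^N_h$; then observe that $(h^2\Delta+1)^{N/2}\Psi_h$ is again an operator of the same type but with symbol $p'(h;x,x',\theta)$ of order $(n-d)/2$ (we have lost the $N$ orders of decay by applying the elliptic factor), so it is enough to show that a $(n-d)/2$-order kernel of the form $\int e^{i\varphi}p(h)\chi(\cdot,\cdot,h\theta)\,d\theta$ defines an operator bounded on $L^2$ with norm $O(h^N)$ — wait, that loses the gain. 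So instead I keep the $N$ orders: I show directly that $\Psi_h:L^2\to L^2$ with norm $O(h^N)$ using that $p$ has order $(n-d)/2-N$, and separately that $(h^2\Delta+1)^{N/2}\Psi_h:L^2\to L^2$ is $O(1)$ (indeed $O(h^0)$, since it is a normalized semiclassical FIO of order $(n-d)/2$ with cut-off); interpolating/combining gives $\Psi_h:L^2\to H^N_h$ with norm $O(h^N)$, and then the general $s$ follows by composing with the (semiclassically bounded) elliptic operators $(h^2\Delta+1)^{\pm s/2}$ and using an Egorov-type argument, or more simply by noting the same proof applies verbatim after inserting these weights on both sides.

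The core lemma to be proven is therefore: if $q(h;\cdot)$ is a bounded family in $S^{(n-d)/2-N}$, then the operator with kernel $\int e^{i\varphi}q(h;x,x',\theta)\chi(x,x',h\theta)\,d\theta$ is bounded $L^2\to L^2$ with norm $O(h^N)$. For this I would use the standard Fourier-integral-operator $L^2$-boundedness machinery (Calderón–Vaillancourt / Asada–Fujiwara, or the Hörmander square argument $T^*T$), but tracking the $h$-dependence through the cut-off. The key computation is that on $\supp\chi(x,x',h\theta)$ we have $|\theta|\le R/h$, while $\chi\equiv 1$ forces no upper bound; splitting at $|\theta|\sim 1/h$: on $|\theta|\le 1/h$ one estimates the Schur test / the $TT^*$ kernel crudely using $|q|\lesssim \langle\theta\rangle^{(n-d)/2-N}$ and integrating, picking up exactly $h^N$ from the volume bound combined with the FIO normalization; on $|\theta|\ge 1/h$ the decay $\langle\theta\rangle^{-N}\le h^N$ pulls out $h^N$ outright and the remaining $S^{(n-d)/2}$ piece is handled by the classical (non-semiclassical) FIO $L^2$-bound. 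Rescaling $\theta\mapsto\theta/h$ to make the phase $\frac1h\varphi$ semiclassical (as the authors already do in the displayed identity) makes the nondegeneracy uniform and clarifies that the stationary phase / non-stationary phase estimates are $h$-uniform.

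The main obstacle I expect is bookkeeping the interaction between the cut-off $\chi(x,x',h\theta)$ — which is \emph{not} a classical symbol cut-off and which becomes singular in the semiclassical scaling — and the FIO normalization: one must check that near the boundary region $|h\theta|\sim 1$ (the transition zone of $\chi$), where derivatives of $\chi(x,x',h\theta)$ in $\theta$ cost powers of $h$ on the one hand but are compensated by being supported in $|\theta|\sim 1/h$ on the other, no loss of powers of $h$ occurs. A secondary technical point is that $\varphi$ is only a classical symbol in $S^1$ away from $\theta=0$ (Remark~\ref{regularization}), so the non-stationary phase integrations by parts in $\theta$ must be performed only on the support of $\chi$, which by \eqref{chi} stays away from the zero section and hence away from the singularity of $\varphi$; this is exactly why the cut-off $\chi$ was inserted, so it should go through cleanly.
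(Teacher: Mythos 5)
The decisive step of your argument — passing from ``$\Psi_h$ is $O(h^N)$ on $L^2$'' and ``$(h^2\Delta+1)^{N/2}\Psi_h$ is $O(1)$ on $L^2$'' to ``$\Psi_h:L^2\to H^N_h$ with norm $O(h^N)$'' by ``interpolating/combining'' — does not work. By Definition~\ref{Hsh}, $\|\Psi_h\|_{\mathcal B(L^2,H^N_h)}=\|(h^2\Delta+1)^{N/2}\Psi_h\|_{\mathcal B(L^2)}$, so the lemma requires the \emph{composed} operator to be $O(h^N)$; interpolation between your two bounds only yields $O(h^{(1-\theta)N})$ into $H^{\theta N}_h$, which degenerates to $O(1)$ exactly at the endpoint you need. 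The composed operator is in fact $O(h^N)$, but for a reason your text never assembles: each factor $hD_x$ applied to the kernel produces $h\,\partial_{x}\varphi\cdot p+\dots$, i.e.\ one explicit power of $h$ together with a one-unit \emph{increase} of the $\theta$-order, so after $k\le N$ such derivatives the amplitude is $h^k\tilde p_k$ with $\tilde p_k\in S^{(n-d)/2-N+k}$; since $\chi(x,x',h\theta)$ forces $|h\theta|\ge\delta$ on the support, the residual decay satisfies $\langle\theta\rangle^{-(N-k)}\le c_\delta h^{N-k}\langle h\theta\rangle^{-(N-k)}$ and supplies the missing $h^{N-k}$, leaving $h^N$ times a uniformly bounded amplitude of the critical order $(n-d)/2$, to which the classical $L^2$-continuity theorem for Fourier integral operators with graph canonical relations applies. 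This is precisely the bookkeeping in the paper's proof (the family $h^k\tilde p_k$ and the estimates $h^k\langle\theta\rangle^k\le\langle h\theta\rangle^k$, $\langle\theta\rangle^{-N}\le c'_\delta h^N\langle h\theta\rangle^{-N}$). Your proposal contains each half of the mechanism separately — the explicit $h$'s from semiclassical differentiation, and the conversion of $\theta$-decay into powers of $h$ on $\{|h\theta|\ge\delta\}$ — but never applies them jointly to the differentiated kernel, which is where the conclusion is actually decided.

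Two further points. First, your opening decomposition rests on a wrong picture of the support: since $\chi$ vanishes near the zero section by \eqref{chi}, the factor $\chi(x,x',h\theta)$ \emph{annihilates} the entire region $|\theta|\le\delta/h$ rather than being ``harmless'' there, so that region is empty and the volume count you perform in it is moot (and a pointwise $L^1_\theta$ bound on the kernel would not give $L^2\to L^2$ continuity for an FIO in any case). Second, for general $s$ you gesture at conjugation by $(h^2\Delta+1)^{\pm s/2}$ and ``an Egorov-type argument''; the paper instead reduces to $s\in\mathbb N_0$ by interpolation and duality and composes with $Op_h(a)$ for an elliptic $a\in S^{-s}$ vanishing near $\eta=0$. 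That composition requires a genuinely separate analysis of the resulting double oscillatory integral with phase $\varphi(x,y,\theta)+(y-x')\cdot\eta$: a homogeneous cut-off confines the main term to $\{|\theta|\sim|\eta|\}$, where the argument above applies, while on the complement integration by parts in $y$ against the nonstationary phase, combined with the lower bound $h(|\theta|+|\eta|)\ge 2\delta$ (Estimate \eqref{est.1}), yields $O(h^\infty)$. This component is absent from your proposal and cannot be replaced by the one-line conjugation you suggest.
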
 
We call an operator family satisfying \eqref{h-estimate} an $O(h^N)$-family.

\begin{proof}
By interpolation and duality, it is sufficient to establish the assertion for $s\in \mathbb N_0$. Choose an elliptic symbol $a=a(y,\eta)\in S^{-s}$, which vanishes for small $\eta$, 
and consider the composition $\Psi_h Op_h(a)$. 
It is then sufficient to show that for every multi-index $\alpha$ with $|\alpha|\le s+ N$, 
the norm of the composition $(hD_x)^\alpha \Psi_hOp_h(a)$ in $\mathcal B(L^2(M))$ is $O(h^N)$. 

In view of the fact that 
\begin{eqnarray*}\lefteqn{D_{x_j} \Psi_h(x,x') 
= \int e^{i\varphi(x,x',\theta)}}\\
&&\times \Big(\partial_{x_j} \varphi(x,x',\theta) p(h;x, x',\theta)\chi(x,x',h\theta)
+ D_{x_j} \big(p(h;x,x',\theta) \chi(x,x',h\theta)\big)\Big) \, d\theta
\end{eqnarray*}
and that derivatives of $\chi(x,x',h\theta)$ are in $S^0$, uniformly in $h$  we may reduce the task to the following: 
Let $ \tilde p_k(h;\cdot,\cdot,\cdot)$, $0<h\le 1$, be bounded in $S^{(n-d)/2-N+k}$ for $k=0,\ldots,s+N$, denote by 
$\Psi_h^{(k)}$ the operator defined by the Schwartz kernel 
$$\int e^{i\varphi(x,x',\theta)} h^k \tilde p_k(h;x,x',\theta) \chi(x,x',h\theta)\, d\theta$$ 
and show that $\|\Psi^{(k)}_hOp_h(a)\|_{\mathcal B(L^2(M))}=O(h^N)$.
Indeed, the composition  $\Psi^{(k)}_hOp_h(a)$ has the Schwartz kernel 
$$(2\pi)^{-n}  \int\!\!\!\iint e^{ i\psi(x,x',y,\theta,\eta)} c(h;x,y,\theta,\eta) \,  d\eta dy d\theta,$$
where $\psi(x,x',y,\theta,\eta) = \varphi(x,y,\theta) +(y-x')\cdot\eta$ and 
$$
c(h; x,y,\theta, \eta) = h^k \tilde p_k(h;x,y,\theta) \chi(x,y,h\theta) a(y,h\eta). 
$$
As the phase function  $\varphi$ parametrizes a neighborhood of a point on the graph of $C$, we can assume 
that $\partial_y \varphi(x,y,\theta)\not=0$ for all $(x,y,\theta)$. 
Following an idea from the proof of \cite[Theorem 25.2.3]{HIV} we show that there exist constants $c_1,c_2>0$
such that 
$$c_1|\eta| \le |\theta|\le c_2|\eta|, \ \text{ if } \partial_y\varphi(x,y,\theta) + \eta=0:$$
In fact, if $|\eta|=1$, then the $1$-homogeneity implies that $|\theta|$ is bounded from above and below, when $|\partial_y \varphi(x,y,\theta)| =1=|\eta|$. We next choose a smooth function $H=H(\theta, \eta)$, homogeneous of degree zero,  which is equal to 1, if 
$ c_1|\eta|/2 \le |\theta|\le 2c_2|\eta|$ and has its support in the conic set, where
$c_1|\eta|/3 \le |\theta|\le 3c_2|\eta|$.  We  define 
\begin{eqnarray*}
d(h;x,y,\theta,\eta) &=& c(h; x,y,\theta,\eta) H(\theta,\eta), \text{ and}\\
r(h;x,y,\theta,\eta) &=& c(h; x,y,\theta,\eta)(1- H(\theta,\eta)).
\end{eqnarray*}
Then there exists a constant $c_0$ such that 
$|\partial_y\varphi(x,y,\theta)+\eta|\ge c_0(|\theta|+|\eta|) $ on $\text{\rm supp }r$ and 
$c_1|\eta|/3 \le |\theta|\le 3c_2|\eta|$ on $\text{\rm supp }d$. 

Let us first consider 
$$R(x,x') = \int\!\!\iint e^{i \psi(x,x',y,\theta,\eta)} r(h;x,y,\theta,\eta) \, d\theta d\eta dy.
$$
We define the first order differential operator
\begin{eqnarray*}
L = L(x,y,\theta,\eta,D_y) 
= \frac{\partial_y\varphi(x,y,\theta) +\eta}{|\partial_y\varphi(x,y,\theta) +\eta|^2}D_y
\end{eqnarray*}
and observe that $Le^{i \psi(x,x',y,\theta,\eta)} = e^{i \psi(x,x',y,\theta,\eta)}$. 
For any $j\in \mathbb N$ we therefore have  
\begin{eqnarray}\label{eq-rr}
 R(x,x') = \int\!\!\iint e^{ i \psi(x,x',y,\theta,\eta)} (L^t)^j r(h;x,y,\theta,\eta) \, d\theta d\eta dy.
\end{eqnarray}
In view of the fact that there exists some $\delta >0$ such that $\chi(x,y,\theta)$ and 
$a(y,\eta)$ vanish for $|\theta|\le \delta$ and $|\eta|\le \delta$, respectively, we see that the 
amplitude vanishes unless $h( |\theta|+|\eta|)\ge 2\delta$. In this case, however, there 
exists a constant $c_\delta$ such that 
\begin{eqnarray}\label{est.1}
(1+|\theta|+|\eta|)^{-1} \le c_\delta h (1+ h|\theta| + h|\eta|)^{-1}.
\end{eqnarray}
Hence the amplitude in \eqref{eq-rr} as well as its derivatives can be estimated by 
$O(h^\ell (1+|\theta|+|\eta|)^{-\ell})$ 
for arbitrary $\ell$, if we choose $j$ sufficiently large.  
So we obtain the assertion in this case. 

Next consider the operator with the kernel  
$$D^h(x,x') = \int\!\!\iint e^{ i \psi(x,x',y,\theta,\eta)} d(h;x,y,\theta,\eta) \, d\theta d\eta dy.
$$ 
Since $\tilde p_k(h;\cdot, \cdot, \cdot)\in S^{(n-d)/2-N+k}$,  
uniformly in $h$, and $a\in S^{-s}$  we conclude that, with suitable constants  
$c_{\alpha, \beta,\gamma}$,
$$
|D^\alpha_\theta D^\beta_{x,y} D^\gamma_\eta d(h;x,y,\theta, \eta)|
\le \begin{cases} c_{\alpha, \beta,\gamma} h^k\langle\theta\rangle^{(n-d)/2-N+k-|\alpha|} h^{|\gamma|} \langle h\eta\rangle^{-s-|\gamma|} , &  h|\theta|\ge \delta\\0&\text{else}.
\end{cases}
$$
We note that $h^k\langle \theta\rangle^{k} \le \langle h\theta\rangle^k$, 
that $\langle \theta\rangle^{-N} \le c'_\delta h^N \langle h\theta\rangle^{-N}$ for $|h\theta|\ge \delta$ and some $c_\delta'$, and that $h^{|\gamma|}\langle h\eta\rangle^{-|\gamma|} \le \langle\eta\rangle^{-|\gamma|}$.
Since, moreover,  $c_1|\eta|/3 \le |\theta|\le 3c_2|\eta|$ on $\text{\rm supp }d$ we can estimate 
the last expression by $c_3 h^N(1+|\theta|+|\eta|)^{(n-d)/2-|\alpha|-|\gamma|}$, 
uniformly in $h$, for a suitable constant $c_3$. 
Hence $h^{-N} D^h$ is a conormal distribution with uniform bounds in 
$I^0(M\times M,C)$, and the associated operator is
bounded on $L^2(M)$, uniformly in $h$, by \cite[Corollary 25.3.2]{HIII}. 
This completes the argument. 
\end{proof}

\begin{lemma}\label{negligible}
Consider a semiclassical Fourier integral operator $\Psi$ given by a kernel of the form 
\begin{eqnarray*}
K^\Psi (x,x') = 
\int e^{\frac ih\varphi(x,x',\theta)} p(h;x,x',\theta)\, d\theta,
\end{eqnarray*}
where the phase function $\varphi$ is as before and $p(h; \cdot, \cdot, \cdot)$, $0<h\le 1$, 
is a uniformly bounded family in $S^{(n-d)/2}$. We assume that $p(h; x,x',\theta)$  
vanishes for small and large $|\theta|$, uniformly in $h$,  and, moreover, in an open
neighborhood of $\alpha^{-1}(T^*_0M\times \{|\xi'|\ge \varepsilon\})$, which is conic at infinity,
also independent of $h$.  
Furthermore, let $a=a(y,\eta)$ be a symbol which vanishes for $|\eta|\le  2\varepsilon$. 
Then $\Psi Op_h(a)$ is an $O(h^\infty)$-family. 
\end{lemma}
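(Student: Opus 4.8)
The plan is to compute the Schwartz kernel of $\Psi\,Op_h(a)$ explicitly as an oscillatory integral and to show that it, together with all its derivatives in $(x,x')$, is $O(h^\infty)$ uniformly on $M\times M$; it follows from the definitions that an operator family with a Schwartz kernel of this type is an $O(h^N)$-family for every $N$ (conjugating by suitable powers of $(h^2\Delta+1)$ preserves the property and reduces the claim to the elementary fact that an operator on $L^2(M)$ whose kernel is uniformly $O(h^\infty)$ has $\mathcal B(L^2)$-norm $O(h^\infty)$). We may therefore work in local coordinates. First note that for each $h$ the amplitude $p(h;\cdot,\cdot,\cdot)$ is compactly supported in $\theta$, so $K^\Psi$ is smooth; hence $\Psi$, and with it $\Psi\,Op_h(a)$, is smoothing and the composition is well defined. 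Localizing by the partition of unity occurring in $Op_h(a)$ and writing the quantization in coordinates, we are reduced to estimating, on a product of charts,
$$
K(x,x')=\frac{1}{(2\pi h)^{n}}\int\!\!\!\iint e^{\frac ih\bigl(\varphi(x,y,\theta)+(y-x')\cdot\eta\bigr)}\,p(h;x,y,\theta)\,a(y,\eta)\,d\theta\,dy\,d\eta
$$
(an oscillatory integral whose absolute convergence the integrations by parts below will also establish), where $(x,x',y)$ range over a compact set, $\theta$ over an annulus $0<\delta\le|\theta|\le R$ on $\supp p$, and $|\eta|\ge 2\varepsilon$ on $\supp a$ (say $a\in S^{m}$).

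The crucial point, which I expect to be the only nonroutine step, is a quantitative nonstationarity of the total phase in $(y,\theta)$ on the support of the amplitude $p(h;x,y,\theta)\,a(y,\eta)$: there is $c>0$ with
$$
\bigl|\bigl(\partial_y\varphi(x,y,\theta)+\eta,\ \partial_\theta\varphi(x,y,\theta)\bigr)\bigr|\ \ge\ c\,\langle\eta\rangle .
$$
For $|\eta|$ large this is clear, as $\partial_y\varphi$ is bounded on $\supp p$ and hence $|\partial_y\varphi+\eta|\ge|\eta|/2$. For $|\eta|$ bounded I argue by compactness: if the bound failed there would be a sequence in the support of the amplitude along which $\partial_\theta\varphi\to0$ and $\partial_y\varphi+\eta\to0$; since everything then lies in a compact set ($|\theta|$ being bounded away from $0$ and $\infty$, and $|\eta|$ bounded), we may pass to a limit $(x_\infty,y_\infty,\theta_\infty,\eta_\infty)$ with $(x_\infty,y_\infty,\theta_\infty)\in\crit_\varphi\cap\supp p$ and $\eta_\infty=-\partial_y\varphi(x_\infty,y_\infty,\theta_\infty)$, where $|\eta_\infty|\ge2\varepsilon$. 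By the definition \eqref{eq-a1} of $\alpha$ the image $\alpha(x_\infty,y_\infty,\theta_\infty)$ then has last coordinate $-\partial_y\varphi(x_\infty,y_\infty,\theta_\infty)=\eta_\infty$ of norm $\ge2\varepsilon$, so $(x_\infty,y_\infty,\theta_\infty)\in\alpha^{-1}\bigl(T^*_0M\times\{|\xi'|\ge\varepsilon\}\bigr)$; but $p$ vanishes in an open neighbourhood of this set by hypothesis, contradicting $(x_\infty,y_\infty,\theta_\infty)\in\supp p$. This is the step in which both support assumptions of the lemma are essentially used, together with the geometry of the canonical relation (the identity $\xi'=-\partial_{x'}\varphi$ on $\crit_\varphi$): the frequencies at which $a$ lives, $|\eta|\ge2\varepsilon$, cannot match the small values of $|\partial_{x'}\varphi|$ forced on $\crit_\varphi\cap\supp p$.

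Granted this lower bound, the remainder is a routine integration by parts in $(y,\theta)$. Writing $W=\bigl(\partial_y\varphi(x,y,\theta)+\eta,\ \partial_\theta\varphi(x,y,\theta)\bigr)$, the operator $L=\frac hi\,|W|^{-2}\,W\cdot\nabla_{(y,\theta)}$ reproduces the exponential, $L\,e^{\frac ih(\varphi(x,y,\theta)+(y-x')\cdot\eta)}=e^{\frac ih(\varphi(x,y,\theta)+(y-x')\cdot\eta)}$, so applying the transpose $L^{t}$ $R$ times transfers it to the amplitude. Since $\varphi$ is smooth on the compact $\theta$-annulus, all its derivatives are bounded there, whence $|W|\le C\langle\eta\rangle$ and $|W|^{-2}W$ together with all its $(y,\theta)$-derivatives is $O(\langle\eta\rangle^{-1})$, while differentiating $a\in S^{m}$ in $y$ preserves its order. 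Hence each application of $L^{t}$ gains a factor $O(h\langle\eta\rangle^{-1})$, and after $R$ steps the amplitude is bounded by $C_R\,h^{R}\langle\eta\rangle^{m-R}$ on the same support. Integrating in $(\theta,y,\eta)$ and accounting for the prefactor $h^{-n}$ gives $|K(x,x')|\le C'_R\,h^{R-n}$ once $R>m+n$, which is $O(h^\infty)$. The $(x,x')$-derivatives of $K$ are handled identically: $\partial_x$ brings down $\frac ih\partial_x\varphi$ (bounded) and $\partial_{x'}$ brings down $-\frac ih\eta$, so $|\alpha|+|\beta|$ such derivatives cost at most $h^{-|\alpha|-|\beta|}\langle\eta\rangle^{|\beta|}$, which is absorbed by enlarging $R$. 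Thus the kernel of $\Psi\,Op_h(a)$ is $O(h^\infty)$ in $C^\infty(M\times M)$, and $\Psi\,Op_h(a)$ is an $O(h^\infty)$-family.
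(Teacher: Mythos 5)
Your proof is correct, and the pivotal observation is the same one the paper's proof turns on: by \eqref{eq-a1}, at a stationary point of the total phase in $(y,\theta)$ the variable $\eta$ must equal the $\xi'$-component of $\alpha(x,y,\theta)$, so the two support hypotheses (on $p$ near $\alpha^{-1}(T^*_0M\times\{|\xi'|\ge\varepsilon\})$ and on $a$ for $|\eta|\le 2\varepsilon$) are incompatible with stationarity. Where you genuinely differ is in the organization of the estimates. The paper first inserts a cut-off $H(\theta,\eta)$ separating the region $|\theta|\sim|\eta|$ from its complement, kills the complement by the $y$-integration by parts already used in Lemma \ref{mapping} (extracting powers of $h$ from the fact that the amplitude lives where $h(|\theta|+|\eta|)\ge 2\delta$), and only then runs a nonstationary-phase argument on the remaining, now compactly supported, piece with an operator $\tilde L$ differentiating in all of $(\theta,\eta,y)$. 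You instead exploit from the outset that $\supp p$ lies in a fixed compact $\theta$-annulus, so $\partial_y\varphi$ and $\partial_\theta\varphi$ are bounded there; this makes the single uniform bound $\bigl|(\partial_y\varphi+\eta,\partial_\theta\varphi)\bigr|\ge c\langle\eta\rangle$ available (trivially for large $|\eta|$, by compactness plus the support hypotheses for bounded $|\eta|$), and one integration by parts in $(y,\theta)$ alone then delivers the powers of $h$ and the $\eta$-decay simultaneously, with no splitting. This is a legitimate simplification, bought by the hypothesis that $p$ vanishes for large $|\theta|$ uniformly in $h$ -- a hypothesis the paper also uses, but only later, to give its $H$-localized piece compact support. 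Two points deserve slightly more explicit wording: in the compactness step the limit point should be placed in the fixed closed set containing $\supp p(h;\cdot)$ for \emph{all} $h$, which by the ``independent of $h$'' clauses avoids a fixed open neighborhood of $\alpha^{-1}(T^*_0M\times\{|\xi'|\ge\varepsilon\})$ (your sentence ``contradicting $(x_\infty,y_\infty,\theta_\infty)\in\supp p$'' glosses over the $h$-dependence of the supports); and the passage from a kernel that is $O(h^\infty)$ in $C^\infty(M\times M)$ to an $O(h^\infty)$-family on the scale $H^s_h$ is indeed elementary but worth a line, e.g.\ via $\|u\|_{H^s}\le\|u\|_{H^s_h}$ for $s\le 0$ and $\|v\|_{H^{s}_h}\le\|v\|_{H^{\max(s,0)}}$ for $0<h\le1$.
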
 

\begin{proof} The Schwartz kernel of $\Psi Op_h(a)$ is  given by  
the oscillatory integral
\begin{eqnarray*}\lefteqn{
(2\pi)^{-n}  h^{ -n} \int\!\!\!\iint e^{\frac ih \psi(x,x',y,\theta,\eta)} p(h;x,y,\theta)a(y,\eta) \,  d\eta dy d\theta}\\
&=& (2\pi)^{-n} h^d \int\!\!\!\iint e^{i \psi(x,x',y,\theta,\eta)} c(h;x,y,\theta,\eta) \,  d\eta dy d\theta,
\end{eqnarray*}
where $\psi(x,x',y,\theta,\eta) = \varphi(x,y,\theta) +(y-x')\cdot\eta$ and 
$$
c(h; x,y,\theta, \eta) = p(h;x,y,h\theta) a(y,h\eta). 
$$
We are now in a situation similar to that in the proof of Lemma \ref{mapping} and let
\begin{eqnarray*}
d(h;x,y,\theta,\eta) &=& c(h; x,y,\theta,\eta) H(\theta,\eta), \text{ and}\\
r(h;x,y,\theta,\eta) &=& c(h; x,y,\theta,\eta)(1- H(\theta,\eta)).
\end{eqnarray*}
with $H$ as before, recalling that  there exist positive constants $c_0$, $c_1$, $c_2$, 
such that 
$|\partial_y\varphi(x,y,\theta)+\eta|\ge c_0(|\theta|+|\eta|) $ on $\text{\rm supp }r$ and 
$c_1|\eta|/3 \le |\theta|\le 3c_2|\eta|$ on $\text{\rm supp }d$. 
As in the proof of Lemma \ref{mapping} we see that the Schwartz kernel 
$$R^h(x,x') = \int\!\!\iint e^{ i \psi(x,x',y,\theta,\eta)} r(h;x,y,\theta,\eta) \, d\theta d\eta dy,
$$
defines an $O(h^\infty)$-family. 

We shall show that the same is true for the operator with the kernel  
\begin{eqnarray}\label{eq-dh1}
 D^h(x,x') = 
\int\!\!\iint e^{\frac ih \psi(x,x',y,\theta,\eta)} d\left(h;x,y,\frac \theta h,\frac \eta h\right) \, d\theta d\eta dy.
\end{eqnarray}
Here, the amplitude $d$ can be assumed to have compact support, since 
$p$ vanishes for large $|\theta|$ and since $|\eta|$ and $|\theta|$ are comparable on the  support of $d$. 
   
The phase $\psi$ is stationary, if and only if 
$$\partial_\theta\varphi(x,y,\theta)=0, \quad y=x', \quad \eta=-\partial_y\varphi(x,y,\theta) .
$$
In this case,  $(x,x',\theta)$ belongs to the critical set of $\varphi$ and $\eta$ equals the 
$\xi'$-component of $\alpha(x,x',\theta)$. By assumption, however,  
$p$ vanishes in an open 
neighborhood of $\alpha^{-1}(T^*_0M\times \{ |\xi'|\ge \varepsilon\})$, while $a(y,\eta)$ 
vanishes for $|\eta|\le 2\varepsilon$.  
As a result, the gradient of the phase function is bounded away  from zero on the 
support of the amplitude. 
Integration by parts in \eqref{eq-dh1} with the operator 
$$\tilde L=
\frac{\partial_\theta\varphi(x,y,\theta)\partial_\theta+ (y-x')\partial_\eta + (\eta+\partial_y\varphi(x,y,\theta))\partial_y}{|\partial_\theta\varphi(x,y,\theta)|^2+|y-x'|^2+|\eta+\partial_y\varphi(x,y,\theta)|^2}
$$
in connection with Estimate \eqref{est.1}
then shows that the integrand together with its derivatives is 
$O(h^\ell (1+|\theta|+|\eta|)^{-\ell})$ for arbitrary $\ell$ in $\mathbb N$.  
Hence \eqref{eq-dh1} defines an $O(h^\infty)$-family.  
\end{proof}

\paragraph{\bf Correctness of the definition.}

Consider the Schwartz kernels \eqref{ker2} for two choices 
$\chi$ and $\tilde \chi$ of excision functions with the properties in \eqref{chi}. 
Denote, for the moment, the associated kernels by $K^\Phi_{h,\varepsilon, N}$ and
$\tilde K^\Phi_{h,\varepsilon, N}$. 
From the fact that $\chi-\tilde \chi$ vanishes in  an open neighborhood of 
$\alpha^{-1} (T^*_0M\times \{|\xi'|\ge \varepsilon\})$, 
which is conic near infinity, and  Lemma \ref{negligible}
we conclude: 

\begin{corollary}\label{different_chi}
If $a$ is a pseudodifferential symbol with $a(y,\eta)=0$ for $|\eta|\le 2\varepsilon$ 
and $\Phi_{h,\varepsilon,N}$ and $\tilde \Phi_{h,\varepsilon,N}$ are the operators associated with the Schwartz kernels $K^\Phi_{h,\varepsilon, N}$ and $\tilde K^\Phi_{h,\varepsilon, N}$, then 
\begin{eqnarray}\label{Diff}
( \Phi_{h,\varepsilon,N}-\tilde \Phi_{h,\varepsilon,N})Op_h(a) \text{ is an }O(h^\infty)\text{-family.}
\end{eqnarray}
\end{corollary}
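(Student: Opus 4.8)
The plan is to reduce the statement directly to Lemma~\ref{negligible}, as already hinted at in the paragraph preceding the corollary. First I would write out the difference of the two Schwartz kernels. Since $K^\Phi_{h,\varepsilon,N}$ and $\tilde K^\Phi_{h,\varepsilon,N}$ are built from the \emph{same} phase function $\varphi$ and the \emph{same} amplitudes $b_j$ via formula~\eqref{ker2}, with only the excision factor changed from $\chi$ to $\tilde\chi$, their difference is
$$
(K^\Phi_{h,\varepsilon,N}-\tilde K^\Phi_{h,\varepsilon,N})(x,x')
= h^{-(d+n)/2}\sum_{0\le j<N} h^j \int e^{\frac ih\varphi(x,x',\theta)}\,b_j(x,x',\theta)\bigl(\chi(x,x',\theta)-\tilde\chi(x,x',\theta)\bigr)\,d\theta .
$$
Writing $\Psi_j$ for the operator with Schwartz kernel $\int e^{\frac ih\varphi(x,x',\theta)}\, b_j(x,x',\theta)\,(\chi-\tilde\chi)(x,x',\theta)\,d\theta$, this says $\Phi_{h,\varepsilon,N}-\tilde\Phi_{h,\varepsilon,N}=h^{-(d+n)/2}\sum_{0\le j<N}h^j\Psi_j$, a finite sum, so it suffices to handle each $\Psi_j Op_h(a)$ separately.

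Next I would verify that each $\Psi_j$ satisfies the hypotheses of Lemma~\ref{negligible} with the ($h$-independent) amplitude $p=b_j(\chi-\tilde\chi)$. Since $b_j\in S^{(n-d)/2-j}\subset S^{(n-d)/2}$ and $\chi-\tilde\chi\in S^0$, the product lies in $S^{(n-d)/2}$ and, being independent of $h$, trivially forms a uniformly bounded family there. It vanishes for small $|\theta|$ because each $b_j$ does (by the definition of the amplitude $b$ in~\eqref{ker1}), and it vanishes for large $|\theta|$ because $\chi\equiv\tilde\chi\equiv1$ there; both uniformly in $h$. Finally, as recalled just before the statement, $\chi-\tilde\chi$ vanishes in an open neighborhood of $\alpha^{-1}(T^*_0M\times\{|\xi'|\ge\varepsilon\})$ which is conic at infinity and does not depend on $h$, hence so does $p=b_j(\chi-\tilde\chi)$. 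Thus all conditions of Lemma~\ref{negligible} are in place, and since $a(y,\eta)=0$ for $|\eta|\le 2\varepsilon$, that lemma gives that $\Psi_j Op_h(a)$ is an $O(h^\infty)$-family for every $j<N$.

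It then remains only to reassemble: $(\Phi_{h,\varepsilon,N}-\tilde\Phi_{h,\varepsilon,N})Op_h(a)=h^{-(d+n)/2}\sum_{0\le j<N}h^j\,\Psi_j Op_h(a)$ is a finite linear combination, with coefficients that are $O(h^{-(d+n)/2})$, of $O(h^\infty)$-families, and hence is itself an $O(h^\infty)$-family, which is precisely~\eqref{Diff}. I do not expect any real obstacle here: the analytic content has already been discharged in Lemma~\ref{negligible}, and the only point requiring a moment's care is the elementary observation that the polynomially growing prefactor $h^{-(d+n)/2}$ is harmless against the faster-than-any-power decay provided by the $O(h^\infty)$-estimate.
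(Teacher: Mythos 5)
Your proof is correct and follows essentially the same route as the paper: the difference of the two kernels has amplitude $\sum_j h^j b_j\,(\chi-\tilde\chi)$, which vanishes near $\alpha^{-1}(T^*_0M\times\{|\xi'|\ge\varepsilon\})$ as well as for small and large $|\theta|$, so Lemma~\ref{negligible} applies. Your splitting into the individual operators $\Psi_j$ (to meet the uniform-boundedness hypothesis literally and to absorb the prefactor $h^{-(d+n)/2}$) is a useful fleshing-out of a step the paper leaves implicit, but it is not a different argument.
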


\paragraph{\bf Changing phase and amplitude}
The kernel $K^\Phi$ in \eqref{ker1} can be represented by various choices of a phase 
function $\varphi$ and an amplitude $b$. 
To what extent does the kernel $K^\Phi_{h,\varepsilon,N}$ in 
\eqref{ker2} depend on these choices?
In Proposition \ref{fio1}, below, we will show the following result: 
The operator family $\Phi_{h,\varepsilon,N}$ is independent 
of the choice modulo $O(h^{N})$-operator families and families whose Schwartz kernel has the properties in Lemma 
\ref{negligible}.
 
It is known from the work of H\"ormander \cite[Section 3]{Hoe1}  that the changes in phase and amplitude can be reduced to two cases, namely those, 
where the new phase and the new amplitude arise from the previous ones by either 
a change of coordinates or an increase, respectively  a reduction of $\theta$-variables. 
We consider the two cases separately.

\paragraph{\em Change of variables.}  

Let $U$ be a conic open set in $\mathbb R^n\times \mathbb R^n\times \mathbb R^d$ 
which maps to a neighborhood of the point $(x_0,\xi_0; C^{-1}(x_0,\xi_0))$
under the diffeomorphism $\alpha$ in \eqref{eq-a1}.
Suppose there exist another  conic open set $\tilde U\subseteq \mathbb R^n\times \mathbb R^n\times \mathbb R^d$ and a map 
$s:\tilde U\to \mathbb R^d$, positively homogeneous of degree one in the $\mathbb R^d$-variable,  such that the map $S: \tilde U \to U$ given by 
$S(x,x',\eta) = (x,x',s(x,x',\eta))$
is a diffeomorphism of open sets. 

This leads to another representation of the kernel $K^\Phi$ in \eqref{ker1}: Define $\tilde \varphi:\tilde U\to \mathbb R$, $\tilde b:\tilde U\to \mathbb C$ by 
\begin{eqnarray*}
\tilde \varphi(x,x',\eta) &=& \varphi(x,x',s(x,x',\eta))\\
\tilde b(x,x',\eta) &=& b(x,x',s(x,x',\eta)) J_s(x,x',\eta),
\end{eqnarray*}  
where 
$J_s$ is the Jacobian, defined by 
$J_s(x,x',\eta) =\big|\det \partial_\eta s (x,x',\eta)\big|$, and write 
\begin{eqnarray}\label{ker1a}
K^\Phi(x,x') = \int e^{i\widetilde\varphi(x,x',\eta)} \tilde b(x,x',\eta) \, d\eta.
\end{eqnarray}

\begin{lemma}\label{Lemma5} Let $K^\Phi_{h,\varepsilon,N}$ and 
$\tilde K^\Phi_{h,\varepsilon,N}$ be the kernels defined from the representations 
\eqref{ker1} and \eqref{ker1a}, respectively, of $K^\Phi$ in the sense of \eqref{ker2}. 
Then the difference between both kernels has the
properties in Lemma \ref{negligible}.
\end{lemma}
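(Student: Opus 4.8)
The plan is to compare the two semiclassical kernels $K^\Phi_{h,\varepsilon,N}$ and $\tilde K^\Phi_{h,\varepsilon,N}$ directly by performing the change of variables $\theta = s(x,x',\eta)$ in the oscillatory integral \eqref{ker2} built from $\varphi$ and $b$, and checking that what comes out differs from the one built from $\tilde\varphi$ and $\tilde b$ only by a kernel of the type described in Lemma \ref{negligible}. Writing the unscaled form
$$
K^\Phi_{h,\varepsilon,N}(x,x') = \int e^{i\varphi(x,x',\theta)} \sum_{0\le j< N} b_j(x,x',\theta)\,\chi(x,x',h\theta)\,d\theta,
$$
the substitution $\theta = s(x,x',\eta)$ produces the Jacobian factor $J_s(x,x',\eta) = |\det\partial_\eta s(x,x',\eta)|$, turning $e^{i\varphi}b_j$ into $e^{i\tilde\varphi}\tilde b_j$ exactly as in \eqref{ker1a}, while the excision factor becomes $\chi(x,x',h\,s(x,x',\eta))$ instead of the desired $\tilde\chi(x,x',h\eta)$. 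So after the substitution the kernel is
$$
\int e^{i\tilde\varphi(x,x',\eta)} \sum_{0\le j< N}\tilde b_j(x,x',\eta)\,\chi\bigl(x,x',h\,s(x,x',\eta)\bigr)\,d\eta,
$$
and the task reduces to showing that replacing $\chi(x,x',h\,s(x,x',\eta))$ by any admissible excision function $\tilde\chi(x,x',h\eta)$ changes the operator only by a term of the type in Lemma \ref{negligible}.

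The first step is a bookkeeping point: the diffeomorphism $S$ is only defined on the conic set $\tilde U$, so one must insert, once and for all, a fixed homogeneous cutoff supported in a slightly smaller conic neighbourhood inside $\tilde U$ on which both representations of $K^\Phi$ are valid; outside that neighbourhood the amplitudes $b$ and $\tilde b$ already vanish (they are supported near $\crit_\varphi$, resp.\ $\crit_{\tilde\varphi}$), so the substitution is legitimate and no boundary terms appear. The second and main step is to analyze the difference of the two excision factors. Since $s$ is homogeneous of degree one in $\eta$, the function $(x,x',\eta)\mapsto h\,s(x,x',\eta)$ and the function $(x,x',\eta)\mapsto h\eta$ have bounded derivatives uniformly in $h$ when regarded as functions of the rescaled variable, and more importantly the set where $\chi(\cdot,\cdot,h\,s)$ and $\tilde\chi(\cdot,\cdot,h\eta)$ differ is, by the defining property \eqref{chi}, contained in the region where the leading symbol's contribution is cut away, i.e.\ outside an open neighbourhood of $\alpha^{-1}(T^*_0M\times\{|\xi'|\ge\varepsilon\})$ that is conic at infinity. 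The point is that this property is preserved under the substitution: $\alpha$ and the analogous diffeomorphism $\tilde\alpha$ for $\tilde\varphi$ are intertwined by $S$ (both parametrize the same piece of $\graph C$), so the excision region transforms correctly and $\chi(x,x',h\,s(x,x',\eta)) - \tilde\chi(x,x',h\eta)$ vanishes on an open, conic-at-infinity neighbourhood of $\tilde\alpha^{-1}(T^*_0M\times\{|\xi'|\ge\varepsilon\})$, while the whole amplitude vanishes for small and large $|\eta|$ uniformly in $h$. Hence the difference kernel has exactly the structure required in the hypothesis of Lemma \ref{negligible}, and we conclude.

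The main obstacle I anticipate is the geometric verification that $S$ genuinely intertwines $\alpha$ and $\tilde\alpha$, i.e.\ that $\alpha\circ S = \tilde\alpha$ on $\crit_{\tilde\varphi}$: this is what guarantees that the image of the "bad'' set $\{\chi(\cdot,\cdot,h\,s)\ne\tilde\chi(\cdot,\cdot,h\eta)\}$ under $\tilde\alpha$ stays away from $T^*_0M\times\{|\xi'|\ge\varepsilon\}$. This is implicit in Hörmander's equivalence-of-phase-functions machinery \cite{Hoe1}, but it needs to be spelled out here because the excision functions were defined in \eqref{chi} through $\alpha$, and one must be careful that $s$ being homogeneous of degree one is precisely what makes $h\,s(x,x',\eta)$ scale like $\eta$ so that "conic at infinity'' is respected. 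A secondary, purely technical point is that one should check $\tilde b_j$ as produced by the substitution agrees, modulo lower-order homogeneous terms (which are absorbed into the same finite sum up to an $O(h^N)$ error, harmless here), with the $\tilde b_j$ appearing in \eqref{ker1a}; but since the statement only asks for the difference to be of Lemma \ref{negligible} type — not for equality on the nose — this reduces to the excision-function comparison above.
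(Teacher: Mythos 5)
Your proposal is correct and follows essentially the same route as the paper: both reduce the lemma to the change of variables $\theta=s(x,x',\eta)$ (which carries $e^{i\varphi}b_j$ exactly into $e^{i\tilde\varphi}\tilde b_j$ by the homogeneity of $s$ and $J_s$) plus the observation that an admissible excision function for one phase transports to an admissible one for the other, so the two kernels differ only in the choice of excision function, which is covered by Lemma~\ref{negligible}. The only cosmetic difference is direction — you push $\chi$ forward via $s$, while the paper pulls $\tilde\chi$ back via $s^{-1}$ — and the intertwining $\alpha\circ S=\tilde\alpha$ that you flag as the main point to verify is exactly the computation the paper carries out (the extra term $(\partial_\theta\tilde\varphi)\,\partial_{x'}s^{-1}$ in the chain rule vanishes on the critical set).
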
 

\begin{proof}Note that $S$ provides a diffeomorphism between  $\text{Crit}_{\tilde\varphi}$ and 
$\text{Crit}_\varphi$, since 
$$\partial_\eta \tilde \varphi(x,x',\eta) = \partial_\theta \varphi(x,x',s(x,x',\eta)) 
\partial_\eta  s(x,x',\eta),$$
and $\partial_\eta  s(x,x',\eta)$ is invertible. 
Moreover, we observe that the asymptotic expansion of $\tilde b$ is given by 
$\tilde b\sim\sum \tilde b_j$, where $\tilde b_j(x,x',\eta) = b_j(x,x',s(x,x',\eta)) J_s(x,x',\eta)$ 
in view of the fact that $s$ is homogeneous of degree $1$ and $J_s$  is homogeneous 
of degree $0$. 

Next choose an excision function $\tilde \chi: \tilde U \to \mathbb R$ such that 
$\tilde \chi$ vanishes for small $|\eta|$ and $\tilde \chi\equiv 1$ in an open 
neighborhood of $\alpha^{-1} (T_0^*M\times \{|\xi'|\ge \varepsilon\})$,
conic at infinity.   

Write $\chi(x,x',\theta) = \tilde \chi(x,x',s^{-1}(x,x',\theta))$ 
with the inverse function $s^{-1}$ to $s$. Then
\begin{eqnarray*}
\lefteqn{\tilde K^\Phi_{h,\varepsilon,N}(x,x')  
= \int e^{i\tilde\varphi(x,x';\eta)}
\sum_{j< N}\tilde b_j (x,x',\eta)\tilde \chi(x,x',h\eta) \, d\eta }\\
&=& \int e^{i\varphi(x,x';\theta)}
\sum_{j< N} b_j (x,x',\theta) \chi(x,x',h\theta) \, d\theta.  
\end{eqnarray*}
It remains to show that $\chi(x,x'.\theta)$ is an excision function with the properties 
in \eqref{chi}.  
For $(x,x',\theta)\in \text{Crit}_{ \varphi}$ we have  
\begin{eqnarray*}\lefteqn{\partial_{x'} \varphi(x,x',\theta)}\\
&=&
 (\partial_{x'}\tilde \varphi)(x,x',s^{-1}(x,x',\theta))+ (\partial_{\theta}\tilde \varphi)(x,x',s^{-1}(x,x',\theta))\partial_{x'}s^{-1}(x,x',\theta),
\end{eqnarray*}
where the second summand vanishes, since $(x,x',s^{-1}(x,x',\theta))\in \text{Crit}_{\tilde\varphi}$. 
Therefore, if $(x,x',\theta) \in \text{\rm Crit}_{\varphi}$ and $|\partial_{x'}  \varphi(x,x',\theta)|\ge \varepsilon$, then $(x,x',s^{-1}(x,x',\theta))\in \text{Crit}_{\tilde \varphi}$ and 
$| (\partial_{x'}\tilde \varphi)(x,x',s^{-1}(x,x',\theta))|\ge \varepsilon$. Since $\tilde \chi$ 
is assumed to vanish in a neighborhood of these latter points which is conic at infinity, 
$\chi$ will vanish in a neighborhood of the former points, which is conic at infinity.  
Moreover, $\chi(x,x',\theta)$ also vanishes, if $|\theta|$ is small.
Hence $K^\Phi_{h,\varepsilon,N}$ and $\tilde K^\Phi_{h,\varepsilon,N}$ differ at most in the 
choice of the excision function. 
\end{proof}

\paragraph{\em  Reduction of $\theta$-variables.}
In this case one chooses another phase function describing $C$ with a smaller number of 
$\theta$-variables. We follow the set-up for the proof of Theorem 2.3.4 in Duistermaat \cite{Dui}.  
Assume that there exists  a conic neighborhood of a point $(x_0,x'_0,\theta_0)$ and,
after a linear transformation of coordinates, 
a splitting  $\theta = (\theta',\theta'')$ 
with $\theta'=(\theta_1,\ldots, \theta_{k})$,
$\theta''=(\theta_{k+1},\ldots, \theta_{d})$,   $0<k<d$, such that
$\partial^2_{\theta',\theta'}\varphi(x_0,x'_0,\theta_0)=0$, 
$\partial^2_{\theta',\theta''}\varphi(x_0,x'_0,\theta_0)=0$
and $\partial^2_{\theta'',\theta''}\varphi(x_0,x'_0,\theta_0)$ is non-degenerate. Without loss of generality 
we assume that $\theta_0''=0$. The implicit function theorem 
implies that there exists a function $s=s(x,x',\theta')$, defined in a conic neighborhood of 
$(x_0,x_0',\theta_0')$, such that the (unique) solution to $\partial_{\theta''}\varphi(x,x', \theta) = 0$
near $(x_0,x'_0,\theta_0)$ is given by $(x,x',\theta',s(x,x',\theta'))$ with 
$s(x_0,x'_0,\theta_0') = \theta_0''=0$. 

One defines $\tilde \varphi(x,x',\theta')= \varphi(x,x',\theta',s(x,x',\theta'))$ and writes 
$$\varphi(x,x',\theta',\theta'') = \tilde \varphi(x,x', \theta') + \psi(x,x',\theta',\theta'').$$

Then the kernel in \eqref{ker1} takes the form
\begin{eqnarray}\label{K_red}
K^\Phi (x,x') =  
\int  e^{i\tilde\varphi(x,x',\theta')}\int e^{i\psi(x,x',\theta',\theta'')}  b(x,x',\theta',\theta'')\, d\theta'' \, d\theta',
\end{eqnarray}
so that $K^\Phi$ has the alternative representation with the phase $\tilde \varphi$ and
the amplitude 
\begin{eqnarray}\label{tilde_b}
\tilde b(x,x',\theta') = \int e^{i\psi(x,x',\theta)} b(x,x',\theta) \, d\theta''. 
\end{eqnarray}
It can be shown that $\tilde b$ is a classical symbol in $S^{(n-k)/2}$; for a full proof see \cite[p.~144]{Hoe1}. We write $\tilde b \sim\sum \tilde b_j$ 
for the corresponding asymptotic expansion. Note that 
\begin{eqnarray}\label{eq.diff}
\int e^{i\psi(x,x',\theta)} \sum_{j< N} b_j(x,x',\theta) \, d\theta''-\sum_{j< N} \tilde b_j(x,x',\theta')=:r(x,x',\theta')
\end{eqnarray} 
is an element of $S^{(n-k)/2-N}$.

\begin{lemma}\label{Lemma7}
The two representations for the kernel $K^\Phi$ given by  
\eqref{ker1} and  \eqref{K_red}/\eqref{tilde_b} lead to 
kernels $K_{h,\varepsilon,N}^\Phi $ and $\tilde K_{h,\varepsilon,N}^\Phi$. The associated 
operators differ only by  $O(h^{N})$-families and operators whose Schwartz kernels 
have the properties described in Lemma \ref{negligible}. 
\end{lemma}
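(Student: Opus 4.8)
The plan is to carry out the fibre integration over the $\theta''$-variables inside the oscillatory integral \eqref{ker2} for $K^\Phi_{h,\varepsilon,N}$, in the spirit of the passage from \eqref{K_red} to \eqref{tilde_b}, but now keeping track of the semiclassical excision function $\chi(x,x',h\theta)$, and then to compare with $\tilde K^\Phi_{h,\varepsilon,N}$. Writing $\varphi = \tilde\varphi + \psi$ as above and using the rescaled form of \eqref{ker2}, one has
\[
K^\Phi_{h,\varepsilon,N}(x,x') = \int e^{i\tilde\varphi(x,x',\theta')}\Bigl(\int e^{i\psi(x,x',\theta)}\sum_{j<N}b_j(x,x',\theta)\,\chi(x,x',h\theta)\,d\theta''\Bigr)d\theta'.
\]
Here one uses that the homogeneous components $b_j$, like $b$, vanish outside a small conic neighbourhood of $\crit_\varphi$, hence in the inner integral $\theta''$ stays in a conic neighbourhood of the critical fibre $\{\theta'' = s(x,x',\theta')\}$, on which $s$ is defined and $\partial^2_{\theta''\theta''}\varphi$ is non-degenerate; in particular all integrals below are well defined and stationary phase in $\theta''$ is available.

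The crucial point is to understand $\chi(x,x',h\theta)$ near the critical fibre. Since $s$ is positively homogeneous of degree one, $s(x,x',h\theta') = h\,s(x,x',\theta')$, so a Taylor expansion of $\chi$ in its last block of variables about $h\theta'' = s(x,x',h\theta')$ yields
\[
\chi(x,x',h\theta) = \sum_{|\beta|<N}\frac{h^{|\beta|}}{\beta!}\,(\partial^\beta_{\theta''}\chi)\bigl(x,x',h\theta',s(x,x',h\theta')\bigr)\,\bigl(\theta''-s(x,x',\theta')\bigr)^\beta + h^N R_N(h;x,x',\theta),
\]
where $R_N$ carries a factor $(\theta''-s(x,x',\theta'))^N$, and where the coefficients $\partial^\beta_{\theta''}\chi$ with $|\beta|\ge1$, as well as $R_N$, are bounded and supported where $h|\theta|$ lies in a fixed compact subset of $(0,\infty)$ and, in addition, away from $\alpha^{-1}(T^*_0M\times\{|\xi'|\ge\varepsilon\})$ (conic at infinity), because $\chi\equiv1$ there. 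The $|\beta|=0$ term is $\chi\bigl(x,x',h\theta',s(x,x',h\theta')\bigr)$, which depends only on $(x,x',\theta')$; I would denote it $\tilde\chi(x,x',h\theta')$ and check that it is a legitimate excision function for $\tilde\varphi$ in the sense of \eqref{chi}: it vanishes for small $|\eta'|$ by homogeneity of $s$, and it equals $1$ on a conic-at-infinity neighbourhood of $\tilde\alpha^{-1}(T^*_0M\times\{|\xi'|\ge\varepsilon\})$, since on $\crit_{\tilde\varphi}$ one has $\partial_x\tilde\varphi = \partial_x\varphi$, $\partial_{x'}\tilde\varphi = \partial_{x'}\varphi$ at $\theta''=s$, so that $\tilde\alpha$ factors as $(x,x',\eta')\mapsto(x,x',\eta',s(x,x',\eta'))$ followed by $\alpha$. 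By Corollary~\ref{different_chi} applied to $\tilde\varphi$, I may assume this $\tilde\chi$ is the excision function used in $\tilde K^\Phi_{h,\varepsilon,N}$.

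Inserting this expansion, the $|\beta|=0$ contribution to $K^\Phi_{h,\varepsilon,N}$ equals, since $\tilde\chi(x,x',h\theta')$ is independent of $\theta''$ and then by \eqref{eq.diff},
\[
\int e^{i\tilde\varphi}\,\tilde\chi(x,x',h\theta')\Bigl(\int e^{i\psi}\sum_{j<N}b_j\,d\theta''\Bigr)d\theta' = \tilde K^\Phi_{h,\varepsilon,N}(x,x') + \int e^{i\tilde\varphi}\,\tilde\chi(x,x',h\theta')\,r(x,x',\theta')\,d\theta',
\]
and the last term, with $r\in S^{(n-k)/2-N}$, is an $O(h^N)$-family by Lemma~\ref{mapping} (with $n-d$ replaced by $n-k$). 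For each term with $1\le|\beta|<N$ and for the remainder $h^N R_N$, I would factor the $\theta''$-independent quantity $(\partial^\beta_{\theta''}\chi)(x,x',h\theta',s(x,x',h\theta'))$ out of the inner integral and apply stationary phase in $\theta''$ to $\int e^{i\psi}\sum_{j<N}b_j\,(\theta''-s)^\beta\,d\theta''$; the vanishing of $(\theta''-s)^\beta$ on the critical fibre lowers the order of the resulting $\theta'$-symbol, and combined with the explicit gain $h^{|\beta|}$ (resp.\ $h^N$) and the support properties of $\partial^\beta_{\theta''}\chi$ recorded above, each such term defines an $O(h^N)$-family or a family whose Schwartz kernel has the properties of Lemma~\ref{negligible}; in the latter case this is verified as in the proof of Lemma~\ref{negligible}, using the vanishing of the derivatives of $\chi$ near $\alpha^{-1}(T^*_0M\times\{|\xi'|\ge\varepsilon\})$ together with the fact that, upon composition with $Op_h(a)$ for $a$ vanishing for $|\eta|\le2\varepsilon$, the phase becomes non-stationary on the support of the amplitude. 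Summing the contributions gives the assertion.

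The step I expect to be the main obstacle is the one just described: controlling the rescaled cut-off $\chi(x,x',h\theta)$ under the fibre integration. The homogeneity of $s$, which makes the critical fibration compatible with the rescaling $\theta\mapsto h\theta$, and the estimate \eqref{est.1}, which trades differentiations of $\chi(x,x',h\theta)$ for powers of $h$, are what make the bookkeeping possible; keeping the error terms simultaneously inside the two admissible classes — genuine $O(h^N)$-families and families of the type in Lemma~\ref{negligible} — is the delicate part.
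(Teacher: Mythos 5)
Your construction of the reduced excision function $\tilde\chi(x,x',\theta')=\chi(x,x',\theta',s(x,x',\theta'))$, the verification that it is admissible for $\tilde\alpha$ (and, viewed as a $\theta''$-independent function of $(x,x',\theta)$, for $\alpha$ as well), and the treatment of the $|\beta|=0$ term via \eqref{eq.diff} and Lemma \ref{mapping} coincide with the paper's proof. Where you diverge is in handling the discrepancy between $\chi(x,x',h\theta)$ and $\tilde\chi(x,x',h\theta')$: you Taylor-expand $\chi$ in $\theta''$ about the critical fibre and analyse the terms one by one, whereas the paper simply observes that $\chi(x,x',h\theta)-\tilde\chi(x,x',h\theta')$ is the difference of two excision functions both satisfying \eqref{chi} for $\alpha$, hence vanishes for small $|\theta|$ and on a conic-at-infinity neighbourhood of $\alpha^{-1}(T^*_0M\times\{|\xi'|\ge\varepsilon\})$, so that the whole correction term is of Lemma \ref{negligible} type in one stroke.

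The detour is not only unnecessary; its $h$-bookkeeping is wrong. Lemma \ref{negligible} and the boundedness calculus are formulated for kernels in the normalisation $\int e^{\frac ih\varphi}p(h;x,x',\theta)\,d\theta$, i.e.\ after the substitution $\sigma=h\theta$. Under that substitution the factor $h^{|\beta|}\bigl(\theta''-s(x,x',\theta')\bigr)^\beta$ becomes $\bigl(\sigma''-s(x,x',\sigma')\bigr)^\beta$, which is $h$-independent: the advertised gain of $h^{|\beta|}$ (and, for the integral remainder, of $h^N$) is exactly cancelled by the growth of $(\theta''-s)^\beta$ on the support of the coefficients, where $|\theta'|\asymp h^{-1}$ because $\chi$ is locally constant near $0$ and near $\infty$. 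Consequently none of the $|\beta|\ge 1$ terms, and in particular not the Taylor remainder, is an $O(h^N)$-family; stationary phase in $\theta''$ cannot rescue this, since the effective large parameter is $|\sigma|/h\asymp h^{-1}$ and yields at best $h^{|\beta|/2}$. What actually makes these terms admissible is solely their support: each coefficient vanishes for $|\sigma|$ small and large, uniformly in $h$, and on a conic-at-infinity neighbourhood of $\alpha^{-1}(T^*_0M\times\{|\xi'|\ge\varepsilon\})$ --- though for the integral remainder, whose coefficient is evaluated at intermediate points of the segment from $(\sigma',s(\sigma'))$ to $\sigma$, you still need to shrink that neighbourhood so that the entire segment stays inside the set where $\chi\equiv 1$, a point your argument passes over. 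Once the terms are justified by support rather than by powers of $h$, the expansion can be re-summed, and one lands exactly on the paper's shorter argument.
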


\begin{proof}
Choose an excision function $\chi$ with the properties in \eqref{chi}. 
We claim that also the function $\tilde \chi (x,x',\theta')= \chi (x,x',\theta',s(x,x',\theta'))$
is admissible in the sense of \eqref{chi} for the corresponding 
map $\tilde\alpha: \text{\rm Crit}_{\tilde\varphi} \to \text{\rm graph }C$ given by 
$$\tilde \alpha (x,x',\theta' ) =  
(x, \partial_x\tilde \varphi(x,x',\theta'); x',-\partial_{x'} \tilde \varphi (x,x',\theta')).
$$
By construction,  $s$ is one-homogeneous in $\theta'$, so $\tilde \chi$ vanishes for small $|\theta'|$.
Suppose that $(x,x',\theta')\in \tilde\alpha^{-1}(T_0^*M\times\{|\xi'|\ge \varepsilon\})$, 
i.e.~$(x,x',\theta')\in \text{Crit}_{\tilde\varphi}$ with $|\partial_{x'} \tilde\varphi (x,x',\theta')|\ge \varepsilon$. The definition of $s$ implies that  
\begin{eqnarray*}
\partial_{\theta'}\tilde \varphi(x,x',\theta') &=&
(\partial_{\theta'}\varphi)(x,x',\theta',s(x,x',\theta')) \text{ and}
\\
\partial_{x'}\tilde \varphi(x,x',\theta') &=& (\partial_{x'}\varphi)(x,x',\theta', s(x,x',\theta') ) 
\end{eqnarray*}
which implies that  $(x,x',\theta',s(x,x',\theta'))\in \alpha^{-1}(T^*_0M\times\{|\xi'|\ge \varepsilon\})$. 
As $\chi$ equals $ 1$ in a neighborhood of these points, $\tilde \chi \equiv 1$ near 
$(x,x',\theta')$.  This neighborhood is conic near infinity due to the $1$-homogeneity of $s$, 
which proves the claim.

The Schwartz kernel associated with the representation 
\eqref{K_red}/\eqref{tilde_b} of $K^\Phi$ and the choice of $\tilde \chi$ as an excision function 
is given by
$${\tilde K}^\Phi_{h,\varepsilon, N}(x,x')  = \int e^{i\tilde \varphi (x,x',\theta')} \sum_{j<N}
\tilde b_j(x,x',\theta') \tilde \chi(x,x',h\theta')\, d\theta';$$
it differs from $K^{\Phi}_{h,\varepsilon,N}(x,x')$ by 
\begin{eqnarray*}\lefteqn{
(K^{\Phi}_{h,\varepsilon,N}-{\tilde K}^\Phi_{h,\varepsilon, N})(x,x') 
= \int e^{i\tilde\varphi(x,x',\theta')}r(x,x',\theta') \tilde\chi(x,x',h\theta') \, d\theta' }\\
&&+
\int e^{i\varphi(x,x',\theta)} \sum_{j<N} b_j(x,x',\theta) (\chi(x,x',h\theta)- \tilde \chi(x,x',h\theta') )\, d\theta.   
\end{eqnarray*}
The first integral on the right hand side furnishes an $O(h^N)$-family by Lemma 
\ref{mapping}. 
In order to see that the second term has the properties in Lemma \ref{negligible}, let us check 
that $\tilde \chi$ also satisfies the properties in \eqref{chi} for $\alpha$. 
Indeed, $\tilde \chi$ vanishes for small $|\theta|$. Moreover, 
suppose that $(x,x',\theta) \in \alpha^{-1}(T^*_0M\times \{|\xi'|\ge \varepsilon\})$. 
Then $\partial_{\theta}\varphi(x,x',\theta) =0$, so that in particular 
$\partial_{\theta''}\varphi(x,x',\theta) =0$, which implies that $\theta'' = s(x,x',\theta')$. 
Hence $\tilde \chi\equiv 1$ in a neighborhood of $(x,x',\theta)$, which is 
conic at infinity due to the $1$-homogeneity of $s$.
This concludes the proof.
\end{proof}

We can summarize the previous discussion in the following proposition. 

\begin{proposition}\label{fio1}
$($Correctness of the definition$)$  The operator family $\Phi_{h,\varepsilon,N}$ with 
the Schwartz kernel \eqref{ker2} is independent of the choice of the representation 
\eqref{ker1} and the function $\chi$  modulo sums of $O(h^{N})$ operator families 
and operators whose Schwartz kernels 
have the properties described in Lemma \ref{negligible}. 
\end{proposition}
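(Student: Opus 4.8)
The plan is to reduce the statement to the three auxiliary results already at our disposal (Lemmas~\ref{mapping}, \ref{Lemma5}, \ref{Lemma7} and Corollary~\ref{different_chi}) together with H\"ormander's description of the non-uniqueness of an oscillatory-integral representation of a Fourier integral operator. There are two independent sources of ambiguity in the construction \eqref{ker2}: the choice of the representation \eqref{ker1} of $K^\Phi$, i.e.\ of the pair consisting of the phase function $\varphi$ and the amplitude $b$; and the choice of the excision function $\chi$ subject to \eqref{chi}. The ambiguity in $\chi$ is already under control: if $\chi$ and $\tilde\chi$ both satisfy \eqref{chi}, then $\chi-\tilde\chi$ vanishes near the zero section, for large $|\theta|$, and in a conic-at-infinity neighborhood of $\alpha^{-1}(T^*_0M\times\{|\xi'|\ge\varepsilon\})$, so that, after rescaling, the corresponding difference of the kernels \eqref{ker2} is of the form treated in Lemma~\ref{negligible}, which is precisely Corollary~\ref{different_chi}. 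Thus it remains to treat the change of the representation \eqref{ker1} for a compatibly chosen excision function.

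To this end I would invoke \cite[Section~3]{Hoe1}: any two nondegenerate phase functions parametrizing a conic neighborhood of the same point of $\graph C$, together with their amplitudes, are connected by a finite chain of elementary equivalences, each of which is either (i) a fibre-preserving, positively homogeneous change of the $\theta$-variables, or (ii) the addition, respectively deletion, of $\theta$-variables in which $\varphi$ carries a nondegenerate quadratic Hessian. These are exactly the situations analysed in Lemmas~\ref{Lemma5} and~\ref{Lemma7}: in case (i) the two kernels \eqref{ker2} differ by an operator whose Schwartz kernel has the properties in Lemma~\ref{negligible}, and in case (ii) they differ by a sum of an $O(h^{N})$-family and such an operator, the $O(h^{N})$-term arising from truncating the asymptotic expansion of the reduced amplitude at order $N$ (cf.\ \eqref{eq.diff}). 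At each link of the chain one equips the new representation with the excision function prescribed in the proofs of those lemmas; any residual mismatch with a previously fixed excision function falls, by the first paragraph, again into the allowed error class. Concatenating the chain, and using that the class of $O(h^{N})$-families and the class of operators with kernels as in Lemma~\ref{negligible} are each stable under finite sums, one concludes that two operators $\Phi_{h,\varepsilon,N}$ built from different admissible data differ by a sum of the two allowed types. This establishes the proposition microlocally near a single point of $\graph C$.

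To pass to the global statement I would cover $\graph C$ by finitely many conic neighborhoods of the above kind, fix a subordinate partition of unity consisting of classical symbols of order zero, and define $\Phi_{h,\varepsilon,N}$ as the resulting finite sum; two such covers are compared through a common refinement, and on each piece the local statement applies, the accumulated error remaining in the allowed class since it is a finite sum. Finally, the freedom in choosing the $C^\infty$-representative of $K^\Phi$ in \eqref{ker1} alters the amplitude $b$ only by a term which, after multiplication by $\chi$ and hence localisation away from the zero section, is of order $-\infty$; by Lemma~\ref{mapping} applied with $N$ arbitrarily large, the induced change of $\Phi_{h,\varepsilon,N}$ is an $O(h^{\infty})$-family, in particular negligible.

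The bookkeeping of error terms and the localisation are routine. The one point that genuinely requires care---and which is in fact already carried out inside the proofs of Lemmas~\ref{Lemma5} and~\ref{Lemma7}---is the verification that an excision function satisfying \eqref{chi} for one representation induces, via the change $s$, an excision function satisfying \eqref{chi} for the other. This rests on the identities expressing $\partial_{x'}\tilde\varphi$ on $\crit_{\tilde\varphi}$ in terms of $\partial_{x'}\varphi$ on $\crit_\varphi$, together with the $1$-homogeneity of $s$, which together guarantee that the distinguished conic-at-infinity neighborhood of $\alpha^{-1}(T^*_0M\times\{|\xi'|\ge\varepsilon\})$ is respected. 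Assembling these checks along a chain of elementary moves is then mechanical.
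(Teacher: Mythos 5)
Your proposal is correct and follows essentially the same route as the paper: the paper proves Proposition~\ref{fio1} precisely by combining Corollary~\ref{different_chi} (change of excision function) with Lemmas~\ref{Lemma5} and~\ref{Lemma7}, which treat the two elementary moves into which H\"ormander \cite[Section 3]{Hoe1} decomposes any change of phase and amplitude. Your additional remarks on globalisation via a conic cover and on the smoothing ambiguity in \eqref{ker1} are correct but routine, and the paper leaves them implicit.
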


\paragraph{\bf Compositions.} 
We next study the behavior of the semiclassical
operators associated with a quantized canonical transformation under compositions. 
Suppose that $\Phi_1$ and $\Phi_2$ are quantized canonical transformations and 
$\Phi=\Phi_1\Phi_2$. 
For a semiclassical symbol $a\in \mathbb A'/\mathbb A'_N$ we study the difference 
\begin{equation*}
 \Phi_{h,\varepsilon,N}Op_h(a)- \Phi_{1,h,\varepsilon,N} \Phi_{2,h,\varepsilon,N}Op_h(a) .
\end{equation*} 
Denote by $C_1$ and $C_2$ the canonical relations associated with $\Phi_1$ and
$\Phi_2$. 
We recall that there exist changes of coordinates  on $M$ 
such that $C_1$ and $C_2$, respectively, 
can be written with phase functions of the form 
\begin{eqnarray*}
\varphi_1(x,y,\theta) &=& S(x,\theta) -y\cdot \theta; \\
\varphi_2(y,x',\tau) &=& y\cdot\tau- T(x',\tau),
\end{eqnarray*}
with suitable functions $S$ and $T$, homogeneous of degree 1 in $\theta$ and $\tau$,
respectively (note that this implies that $d=n$), see \cite[Proposition 25.3.3]{HIV}.
We also know from Proposition \ref{fio1} that this transition changes the associated semiclassical operators at most by   $O(h^{N})$-families 
and families with the properties in Lemma \ref{negligible} for the corresponding canonical 
relations. So we consider the representations  
\begin{eqnarray}\label{kPhi1}
K^{ \Phi_1}(x,y) &=& \int e^{i(S(x,\theta) - y\cdot \theta)}  b^1(x,y,\theta) \, d\theta,\\
K^{ \Phi_2}(y,x') &=& \int e^{i(y\cdot \tau - T(x',\tau) )}  b^2(y,x',\tau) \, d\tau\label{kPhi2}.
\end{eqnarray}
The kernel $K^{\Phi}$ of the composition $\Phi= \Phi_1 \Phi_2$ then is given by 
\begin{eqnarray}\nonumber
K^{\Phi}(x,x') &=&   \int K^{\Phi_1}(x,y) K^{\Phi_2}(y,x')\, dy \\
&=&\int\!\!\iint e^{i(S(x,\theta)-T(x',\tau) + y\cdot(\tau- \theta))}b^1(x,y,\theta)
 b^2(y,x',\tau) \, d\tau d\theta dy.\label{KPhi}
\end{eqnarray}

\begin{proposition}\label{fio2}
{\rm (Composition formula)} Given quantized canonical transformations $\Phi_1,\Phi_2$ associated with the canonical transformations $C_1$ and $C_2$, and $\Phi=\Phi_1\Phi_2$, we have
\begin{equation*}
 \Phi_{h,\varepsilon,N}Op_h(a)=\Phi_{1,h,\varepsilon,N} \Phi_{2,h,\varepsilon,N}Op_h(a) \mod O(h^{N})\text{-families}
\end{equation*} 
for any semiclassical symbol $a\in\mathbb{A}'/\mathbb{A}'_N$, provided that $\varepsilon>0$ in \eqref{chi} is chosen such that
$a$ vanishes on the subsets 
$\{|\xi|<2\varepsilon\},{C_2}^{-1}\{|\xi|<2\varepsilon\}\subset T^*_0M$.
A corresponding statement  is valid, if we take the product with $Op_h(a)$ on the left and choose 
$\varepsilon$ appropriately. 
\end{proposition}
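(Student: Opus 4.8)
The plan is to put $\Phi_1$, $\Phi_2$ and $\Phi=\Phi_1\Phi_2$ into the normal forms of \eqref{kPhi1}--\eqref{KPhi}, to carry out the classical Fourier integral operator composition — which amounts to eliminating $2n$ phase variables — while carrying the semiclassical excision functions through it, and finally to recognize the outcome as $\Phi_{h,\varepsilon,N}$ by Proposition~\ref{fio1}. So, by Proposition~\ref{fio1} and \cite[Proposition 25.3.3]{HIV}, I may assume $\varphi_1(x,y,\theta)=S(x,\theta)-y\cdot\theta$ and $\varphi_2(y,x',\tau)=y\cdot\tau-T(x',\tau)$ (hence $d=n$), with kernels as in \eqref{kPhi1}, \eqref{kPhi2}, and I represent $\Phi$ by the phase $\tilde\varphi(x,x',\theta)=S(x,\theta)-T(x',\theta)$. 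Writing $\varphi_1+\varphi_2=\tilde\varphi+\psi$ with $\psi(x,x',\theta,y,\tau)=y\cdot(\tau-\theta)-T(x',\tau)+T(x',\theta)$, the transition from \eqref{KPhi} to the $\tilde\varphi$-representation is a reduction of the $2n$ variables $(y,\tau)$ of exactly the type treated in Lemma~\ref{Lemma7}: the map $(y,\tau)\mapsto\psi$ has the single nondegenerate critical point $\tau=\theta$, $y=\partial_\tau T(x',\theta)$, with Hessian there the $2n\times 2n$ block matrix having vanishing $(y,y)$-block, identity off-diagonal blocks and $-\partial^2_\tau T$ in the $(\tau,\tau)$-corner, of determinant $(-1)^n$ and signature $0$; the associated amplitude is the classical one of \cite[p.~144]{Hoe1}, producing $\Phi$'s symbol $\tilde b\sim\sum_j\tilde b_j$ and a remainder in $S^{-N}$ as in \eqref{eq.diff}.

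\textbf{Step 2 (composition and reduction).} Next I write out the Schwartz kernel of $\Phi_{1,h,\varepsilon,N}\Phi_{2,h,\varepsilon,N}$: in the renormalized form it is the oscillatory integral in $(y,\theta,\tau)$ with phase $S(x,\theta)+y\cdot(\tau-\theta)-T(x',\tau)$ and amplitude $\big(\sum_{j<N}b^1_j(x,y,\theta)\big)\big(\sum_{k<N}b^2_k(y,x',\tau)\big)\chi_1(x,y,h\theta)\chi_2(y,x',h\tau)$. Away from the stationary set of the $(y,\tau)$-phase I integrate by parts in $(y,\tau)$; using \eqref{est.1} and the support restrictions furnished by $Op_h(a)$, this part yields an $O(h^\infty)$-family after multiplication by $Op_h(a)$, exactly as in the proofs of Lemmas~\ref{mapping} and~\ref{negligible}. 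Near the stationary set I apply the method of stationary phase in the $2n$ variables $(y,\tau)$ with $\theta$ (equivalently $|\theta|$) as large parameter; this is the reduction of variables from Step~1, now carrying the extra cutoffs. Since $\partial_\tau T$ is homogeneous of degree $0$, the value of the cutoffs at the critical point is $\chi_1\big(x,\partial_\tau T(x',\theta),h\theta\big)\chi_2\big(\partial_\tau T(x',\theta),x',h\theta\big)=\tilde\chi(x,x',h\theta)$, where $\tilde\chi(x,x',\zeta):=\chi_1(x,\partial_\tau T(x',\zeta),\zeta)\,\chi_2(\partial_\tau T(x',\zeta),x',\zeta)$; the outcome is an integral $\int e^{i\tilde\varphi(x,x',\theta)}\big(\sum_{j<N}\tilde b_j(x,x',\theta)\tilde\chi(x,x',h\theta)+r(h;x,x',\theta)\big)\,d\theta$, in which $r$ is the sum of a symbol in $S^{-N}$ from the amplitude reduction and of terms where at least one $\partial_{y,\tau}$ has hit a cutoff $\chi_i(\cdot,h\theta)$ and which therefore carry an extra power of $h$; by Lemma~\ref{mapping} the $r$-part is an $O(h^N)$-family.

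\textbf{Step 3 (matching the cutoff, conclusion).} Finally I compare $\tilde\chi$ with an excision function $\chi^\Phi$ admissible for $C$ in the sense of \eqref{chi}. On $\crit_{\tilde\varphi}$ one has $\partial_\tau T(x',\theta)=\partial_\theta S(x,\theta)$, so $(x,\partial_\theta S(x,\theta),\theta)\in\crit_{\varphi_1}$ with intermediate covariable $\theta$, while $C_2$ carries $(x',\partial_{x'}T(x',\theta))$ — whose norm is comparable to $|\theta|$ — to that intermediate point, and the output covariable of $\tilde\varphi$ at $x'$ is $-\partial_{x'}\tilde\varphi=\partial_{x'}T(x',\theta)$. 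Here the hypothesis on $\varepsilon$ enters: after multiplication by $Op_h(a)$, with $a$ vanishing on $\{|\xi|<2\varepsilon\}$ and on $C_2^{-1}\{|\xi|<2\varepsilon\}$, the surviving support lies where both $|\partial_{x'}T(x',h\theta)|\ge 2\varepsilon$ and $|h\theta|\ge 2\varepsilon$, so that $\chi_1$, $\chi_2$, $\tilde\chi$ and $\chi^\Phi$ are all identically $1$ there; $\tilde\chi-\chi^\Phi$ is then supported where the output covariable or the intermediate covariable is $<\varepsilon$, hence with a gap from $\supp a$, so $\big(\int e^{i\tilde\varphi}\sum_{j<N}\tilde b_j(\tilde\chi-\chi^\Phi)(x,x',h\theta)\,d\theta\big)Op_h(a)$ is negligible by Lemma~\ref{negligible} — and if $\tilde\chi$ should fail the full requirement of \eqref{chi} near $\tilde\alpha^{-1}(T^*_0M\times\{|\xi'|\ge\varepsilon\})$, one first enlarges $\chi_1$ correspondingly using Corollary~\ref{different_chi}. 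Thus $\Phi_{1,h,\varepsilon,N}\Phi_{2,h,\varepsilon,N}Op_h(a)$ equals $K^\Phi_{h,\varepsilon,N}$ built from $\tilde\chi$ and composed with $Op_h(a)$, modulo $O(h^N)$-families; by Proposition~\ref{fio1} the former equals $\Phi_{h,\varepsilon,N}$ modulo $O(h^N)$-families and families as in Lemma~\ref{negligible}, the latter annihilated by $Op_h(a)$ via Lemma~\ref{negligible}. The left-composition statement follows symmetrically, interchanging $S$ with $T$ and the two frequency constraints.

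I expect the main obstacle to be Step~3, specifically the bookkeeping of the $h$-dependent excision functions through the reduction: proving that the corrections generated when a $(y,\tau)$-derivative falls on some $\chi_i(\cdot,h\theta)$ are genuinely $O(h^N)$ rather than merely $O(h)$, and verifying that the reduced cutoff $\tilde\chi$ agrees — on the support surviving multiplication by $Op_h(a)$ — with a genuine excision function for the composed canonical relation $C$. This is precisely where the hypotheses on $\varepsilon$ and on the vanishing locus of $a$, in particular the appearance of $C_2^{-1}\{|\xi|<2\varepsilon\}$, are indispensable.
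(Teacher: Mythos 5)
Your overall strategy — normal forms via Proposition~\ref{fio1}, splitting off the non-stationary region, and using the support hypotheses on $a$ to neutralize the mismatch of cutoffs near the critical set — is sound, and your Step~3 correctly identifies where the condition $a=0$ on $C_2^{-1}\{|\xi|<2\varepsilon\}$ enters. But your route differs from the paper's in a way that creates a genuine gap. The paper never performs the stationary-phase reduction in $(y,\tau)$: it keeps the $3n$-variable phase $S(x,w_2)-T(x',w_3)+\frac{w_1}{|w_2,w_3|}(w_3-w_2)$ as a parametrization of $C_1\circ C_2$, so that $\sum_{j+k<N}b^1_jb^2_k$ (after the $w$-substitution) \emph{is} already the truncated amplitude of $\Phi$ in that representation, and the whole comparison reduces to $\chi_3-\chi_1\chi_2$, killed by non-stationary phase near the critical set. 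Your Step~2 instead pushes the $h$-dependent cutoffs through a stationary-phase expansion, and the claim that the correction terms in which a $(y,\tau)$-derivative hits a cutoff are ``$O(h^N)$ by Lemma~\ref{mapping}'' is false as stated. A $\tau$-derivative falling on $\chi_2(y,x',h\tau)$ contributes exactly one factor of $h$, and a $y$-derivative falling on $\chi_1(x,y,h\theta)$ contributes \emph{no} explicit factor of $h$ and only one unit of symbol decay from the stationary-phase prefactor $|\theta|^{-1}$; Lemma~\ref{mapping} then yields only $O(h)$, not $O(h^N)$. These terms are in fact negligible, but for the reason you use in Step~3, not the one you give in Step~2: they are supported in the transition regions of $\chi_1,\chi_2$, which on the critical set of the full phase (after composing with $Op_h(a)$) correspond to $|h\theta|\lesssim\varepsilon$ or $|h\,\partial_{x'}T(x',\theta)|\approx\varepsilon$, both excluded by the support of $a$; one then concludes by non-stationary phase. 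As written, Step~2 does not establish this.

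The second gap is structural and recurs at several points: whenever you modify $\Phi_1$ — passing to the normal form $\varphi_1=S(x,\theta)-y\cdot\theta$ at the start of Step~1, or ``enlarging $\chi_1$ using Corollary~\ref{different_chi}'' in Step~3 — the resulting error operator $E_1$ is composed on the right with $\Phi_{2,h,\varepsilon,N}Op_h(a)$, not with $Op_h(a)$. Lemma~\ref{negligible}, Corollary~\ref{different_chi} and Proposition~\ref{fio1} control $E_1\,Op_h(b)$ for a symbol $b$ vanishing on $\{|\eta|\le 2\varepsilon\}$; they say nothing about $E_1\Phi_{2,h,\varepsilon,N}Op_h(a)$. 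This is precisely why the paper's proof has a separate second part: it writes out the combined phase $\tilde\varphi(x,y,\theta)+y\cdot\tau-T(z,\tau)+(z-x')\cdot\eta$, observes that on its critical set $(x',\eta)=C_2^{-1}(y,-\partial_y\tilde\varphi(x,y,\theta))$, and uses $a=0$ on $C_2^{-1}\{|\xi|\le 2\varepsilon\}$ to show the amplitude vanishes near the critical set, so that the composition is $O(h^\infty)$. Without this argument (or an equivalent one), your reduction to normal forms in Step~1 and your cutoff adjustments in Step~3 are not justified. Both gaps are fixable with the ingredients you already have in hand, but as written the proof is incomplete at exactly the two places you flag as delicate.
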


\begin{proof}
Going over to adjoints and noting that these have the inverse canonical relations, we see that it is sufficient to study the case, where $Op_h(a)$ acts from the right. 

1. Let us first assume that the kernels of $\Phi_{1,h,\varepsilon,N}$ and $\Phi_{2,h,\varepsilon,N}$ are derived from the representations   \eqref{kPhi1} and \eqref{kPhi2},
continuing the discussion above. According to Lemma \ref{mapping},  neglecting terms of order $\le -N$ in the asymptotic expansion of the amplitude results in errors which are $O(h^N)$-families. 
Composition with $Op_h(a)$ then also furnishes $O(h^N)$-families.  

So let us fix excision functions $\chi_1$ and $\chi_2$ satisfying condition \eqref{chi} for 
$C_1$ and $C_2$, respectively. 
As pointed out after \eqref{chi}, we may assume that $\chi_1(x,y,\theta) \equiv 1$ 
and $\chi_2(y,x',\tau)\equiv 1$ for large $|\theta|$ and $|\tau|$, respectively. 
The kernel of $\Phi_{1,h,\varepsilon,N}\Phi_{2,h,\varepsilon,N}$ is 
\begin{align*}
K(x,x') =\ &\ \int\!\!\iint e^{\frac ih (S(x,\theta) -T(x',\tau) + y(\tau-\theta))} 
\sum_{j,k<N}h^{j+k-2n} b^1_j(x,y,\theta)  b^2_k(y,x' ,\tau) \\
&\times \chi_1(x,y,\theta)\chi_2(y,x',\tau) \, dyd\tau d\theta
\end{align*}
with symbols $b^1_j$ and $b^2_k$, positively homogeneous of degrees $-j$ and $-k$,
respectively.
 
As in the proof of Lemma  \ref{mapping}, we choose a function $H=H(\theta, \tau)$,
positively homogeneous of degree zero,  
such that $H(\theta, \tau) = 1$ when $|\tau|/2\le |\theta|\le 2|\tau|$ and $H$ has 
support in $\{ |\tau|/3\le |\theta|\le 3|\tau|\}$,  
We split the amplitude into the two terms 
\begin{align*}
c(h;x,x',y,\tau, \theta) =& \sum_{j,k<N}h^{j+k-2n} b^1_j(x,y,\theta)  b^2_k(y,x' ,\tau)  \chi_1(x,y,\theta)\chi_2(y,x',\tau)H(\theta,\tau)\\
r(h;x,x',y,\tau, \theta) =& \sum_{j,k<N} h^{j+k-2n}b^1_j(x,y,\theta)  b^2_k(y,x' ,\tau)  \chi_1(x,y,\theta)\chi_2(y,x',\tau)(1-H(\theta,\tau)).
\end{align*}
Again, the term associated with $r$ induces 
an $O(h^\infty)$-family via an integration by parts with respect to $y$. 
The terms in the amplitude $c$ with $j+k\ge N$ furnish $O(h^N)$-families, so 
they can be ignored in the sequel and we can restrict the summation to $j+k<N$.
In order to bring the integral to a standard form, we moreover make the coordinate transform 
$$w=(w_1,w_2,w_3) = (y|\theta,\tau|, \theta,\tau)\in \mathbb R^{3n}$$ suggested by H\"ormander in the proof of Theorem 25.2.3 in \cite{HIV} and express all items in terms 
of  $w$. This makes the phase $1$-homogeneous in $w$. 
In this context we note that $dy= |w_2,w_3|^{-n}dw_1$.

Next choose an excision function $\chi_3$ satisfying the assumptions \eqref{chi}
for the phase $\varphi$ parametrizing  $C_1\circ C_2$
\begin{eqnarray*}
\varphi(x,x',w) = S(x,w_2) - T(x',w_3) +\frac{w_1}{|w_2,w_3|}(w_3-w_2).
\end{eqnarray*}
Without loss of generality we also assume that $\chi_3(x,x',w)\equiv 1$ for large $|w|$. We then study the composition 
$$((\Phi_1\Phi_2)_{h,\varepsilon,N}-\Phi_{1,h,\varepsilon,N}\Phi_{2,h,\varepsilon,N})Op_h(a).
$$
It has the integral kernel
$$(2\pi)^{-n} \int\!\!\iint e^{\frac ih (\varphi(x,z,w) +(z-x')\cdot\eta)}\tilde d(x,z,w) a(z,\eta) 
\, dwdzd\eta.$$
Here, 
\begin{eqnarray*}\lefteqn{
\tilde d(x,z,w) = \Big(\sum_{j+k<N}h^{j+k-3n}
b^1_j\Big(x,\frac{w_1}{|w_2,w_3|},w_2\Big)  
b^2_k\Big(\frac{w_1}{|w_2,w_3|},z ,w_3\Big)\Big)}\\
&&\times \Big(\chi_3(x,z,w)-\chi_1\Big(x,\frac{w_1}{|w_2,w_3|},w_2\Big)
\chi_2\Big(\frac{w_1}{|w_2,w_3|},z,w_3\Big)\Big)\\
&&\times H(w_2,w_3) |w_2,w_3|^{-n}.
\end{eqnarray*}
Formally, the amplitude does not belong to one of the H\"ormander symbol classes. We can fix this problem  as before by introducing a function $\tilde H= H(w,\eta)$, 
homogeneous of degree zero,  such that $\tilde H(w,\eta) = 1$, when
$\frac{c_1}2 |\eta|\le |w|\le 2c_2|\eta|$
for suitable positive constants $c_1$ and $c_2$,  and $\tilde H$ has support in  
$\{\frac{c_1}3 |\eta|\le |w|\le 3c_2|\eta|\}$. 
Multiplying by $\tilde H$ we obtain an
amplitude in $S^0$, at the expense of 
changing the expression by an $O(h^\infty)$-family, which does not affect the result.

We note that the amplitude $\tilde d\tilde H$  now has compact support that lies 
outside a neighborhood of $w=\eta=0$. 
In fact, 
$\chi_1(x,w_1/|w_2,w_3|,w_2)$, $\chi_2(w_1/|w_2,w_3|,z,w_3)$ and $\chi_3(x,z,w)$ vanish for small $|w|$.
For large values of $|w|$,  
$$H(w_2,w_3)\bigl(\chi_3(x,z,w) -\chi_1(x,w_1/|w_2,w_3|,w_2)\chi_2(w_1/|w_2,w_3|,z,w_3)\bigr)=0$$  
by assumption. 
Moreover, for large values of $|\eta|$ also $|\tau|=|w_3|$ will 
be large, hence the amplitude also vanishes.  

The critical points of the phase function
$$\varphi(x,z,w) + (z-x')\cdot \eta= S(x,w_2) - T(z,w_3) +\frac{w_1}{|w_2,w_3|}\cdot(w_3-w_2)
+ (z-x')\cdot \eta$$ 
are given by 
\begin{eqnarray*}
w_2&=&w_3\\
\partial_\theta S(x,w_2)&=& \frac{w_1}{|w_2,w_3|} =\partial_\tau T(z,w_3)\\
\partial_z T(z,w_3) &=&\eta\\
z&=&x'.
\end{eqnarray*}
We claim that the amplitude $\tilde d\tilde H a$ vanishes in a neighborhood of these.
To this end we recall that  $\chi_1(x,y,\theta) = 1$ in a neighborhood of all points $(x,y,\theta)$
with $\partial_\theta S(x,\theta) = y$ and $|\theta|\ge \varepsilon$ 
and that $\chi_2(y,z,\tau)=1$ near all points $(y,z,\tau)$, where $\partial_\tau T(z,\tau)=y$ and 
$|\partial_z T(z,\tau)|\ge \varepsilon$. Finally, $\chi_3(x,z,w) = 1$ near all $(x,z,w)$,
where $\partial_\theta S(x,w_2) =\frac{w_1}{|w_2,w_3|} =   \partial_\tau T(z,w_3) $, 
$w_2=w_3$ and $|\partial_z T(z,w_2) |\ge \varepsilon$.
As a consequence 
$$\chi_3(x,z,w) - \chi_1\Big(x,\frac{w_1}{|w_2,w_3|},w_2\Big)
\chi_2\Big(\frac{w_1}{|w_2,w_3|},z,w_3\Big)$$ 
vanishes near all those
critical points of $\varphi(x,z,w)$, where  $|w_2|\ge\varepsilon$ and 
$|\partial_z T(z,w_3)|\ge \varepsilon$. 
By assumption,  $a(z,\eta)$  vanishes on $\{|\xi|\le 2\varepsilon\}$ and $C_2^{-1}(\{|\xi|<2\varepsilon\})\subset T^*_0M$. 
Now, given $(y,z,\tau) \in \text{Crit}_{\varphi_2}$,  $C_2$ maps $(z,\partial_z T(z,\tau))$ to 
$(y,\tau)$. 
On the critical set of the phase, $(z,\eta)=(z, \partial_z T(z,w_3))= C_2^{-1}(z,w_3)$,
so that $a(z,\eta)$ vanishes whenever $|w_3|<2\varepsilon$ or  $|\partial_z T(z,w_3)|\le
2\varepsilon$. Hence the amplitude indeed vanishes near the critical set, so that the 
integral defines an $O(h^\infty)$- family.  
This shows the proposition for the case of kernels with the representations 
\eqref{kPhi1} and \eqref{kPhi2}.

2. In order to treat the general case, denote by 
$K^{\Phi_1}_{h,\varepsilon,N}$ and $K^{\Phi_2}_{h,\varepsilon,N}$ the Schwartz kernels
derived from the original representations of $K^{\Phi_1}$ and $K^{\Phi_2}$, respectively,
and by $\tilde K^{\Phi_1}_{h,\varepsilon,N}$ and $\tilde K^{\Phi_2}_{h,\varepsilon,N}$ 
those derived from the representations  \eqref{kPhi1} and \eqref{kPhi2}. 
Let $\Phi_{j,h,\varepsilon,N}$ and $\tilde \Phi_{j,h,\varepsilon,N}$, $j=1,2$,  
be the associated operators. 

We already know from Proposition  \ref{fio1} that the difference of the Schwartz kernels
$K^{\Phi_j}_{h,\varepsilon,N}-\tilde K^{\Phi_j}_{h,\varepsilon,N}$,  $j=1,2$, 
has the properties in Lemma \ref{negligible} for the corresponding canonical relations
modulo an $O(h^N)$-family.  
We conclude from Lemma \ref{negligible} and the fact that  $\Phi_{1,h,\varepsilon,N}$ and 
$Op_h(a)$ are  $O(1)$-families by Lemma \ref{mapping} that 
$$\Phi_{1,h,\varepsilon,N}(\Phi_{2,h,\varepsilon,N}-\tilde \Phi_{2,h,\varepsilon,N})Op_h(a),$$
is an $O(h^N)$-family.

It remains to study $(\Phi_{1,h,\varepsilon,N}-\tilde \Phi_{1,h,\varepsilon,N}) \tilde \Phi_{2,h,\varepsilon,N}Op_h(a)$.
The Schwartz kernel of $\tilde \Phi_{2,h,\varepsilon, N} Op_h(a)$  is 
\begin{eqnarray*}\lefteqn{K(y,x') =(2\pi )^{-n} h^{-2n} }\\
&&\times \int\!\!\iint e^{ \frac ih \psi(y,x',z,\eta,\tau)} \ 
\sum_{k<N}h^k b^2_{k}(y,z,\tau)\chi_2(y,z,\tau) 
a(z,\eta) \,
dz d\eta d\tau.
\end{eqnarray*}
Here, the phase is $\psi(y,x',z,\eta,\tau) =  y\cdot \tau - T(z,\tau)+(z-x')\cdot\eta$. It has critical 
points precisely, when $y=\partial_\tau T(x',\tau)$, $z=x'$ and $\eta= \partial_{x'}T(x',\tau)$. 
After multiplication by a cut-off function $H=H(\tau,\eta)$ as above, $|\tau|\sim |\eta|$ on 
${\rm supp}\,H$, and the amplitude belongs to $S^0$, uniformly in $h$. 
We moreover find that 
\begin{itemize}
\item $\chi_2$ vanishes for small $\tau$, hence so does the full amplitude; 
\item  the amplitude vanishes for $|\partial_{x'}T(x',\tau)|<2\varepsilon$, since $\eta=\partial_{x'}T(x',\tau)$.
\end{itemize}

We know that $\Phi_{1,h,\varepsilon,N}-\tilde \Phi_{1,h,\varepsilon,N}$ 
is the sum of an $O(h^{N})$-family and an operator with the properties in 
Lemma \ref{negligible}. When composing it from the right with $\tilde \Phi_{2,h,\varepsilon,N}Op_{h}(a)$, the $O(h^{N})$-family produces an
$O(h^N)$-family. So let us consider the composition with the operator having the properties 
in Lemma \ref{negligible}. We can write it with a kernel of the form 
$$h^{-(d+n)/2}\ \int e^{\frac ih \tilde \varphi(x,y,\theta)} \tilde c(x,y,\theta)\, d\theta,$$
where $\tilde c $ vanishes for small and large $|\theta|$ and furthermore  
vanishes in a neighborhood of all points in ${\rm Crit}_{\tilde \varphi}$, 
for which $|\partial_y\tilde \varphi(x,y,\theta)|\ge \varepsilon$. 
As a consequence, the Schwartz kernel of the composition has the form 
\begin{eqnarray*}
\lefteqn{\tilde K(x,x') = (2\pi)^{-n} h^{-2n-(n+d)/2}}\\
&&\times
\int\!\!\iint\!\!\iint e^{\frac ih \tilde \psi(x,x',y,z,\theta,\tau,\eta)}d(h;x,x',y,z,\theta,\tau,\eta)
\, dyd\theta dz d\tau d\eta,
\end{eqnarray*}
where 
$$\tilde \psi(x,x',y,z,\theta,\tau,\eta) = \tilde \varphi(x,y,\theta) +y\cdot \tau -T(z,\tau)
+(z-x' )\cdot\eta$$
and 
$$d(h;x,x',y,z,\theta,\tau,\eta)) = \tilde c(x,y,\theta) \sum h^kb^2_{k} (y,z,\tau) 
\chi_2(y,z,\tau) a(z,\eta).$$
For the critical points of this phase, we obtain the additional conditions 
$$\tau =- \partial_{y} \tilde \varphi(x,y,\theta), \quad \partial_\theta \tilde \varphi(x,y,\theta)=0.$$
In particular, whenever we are on the critical set of $\tilde \psi$, $(x,y,\theta)$ lies on the 
critical set of $\tilde\varphi$, and so,  $\tilde c (x,y,\theta)=0$, whenever 
$|\partial_y\tilde \varphi(x,y,\theta)|>\varepsilon$. 
On the critical set we moreover have  
$(x',\eta) = (x',\partial_{x'}T(x',\tau)) 
= C_2^{-1}(y, \tau) = C^{-1}_2(y,-\partial_y\tilde \varphi(x,y,\theta))$. 
As  $a(y,\eta) = 0$ on $C_2^{-1}(\{|\xi|\le 2\varepsilon\})$, 
the amplitude vanishes in a neighborhood of all critical points, so that
the composition is an $O(h^\infty)$-family. 
This concludes the proof. 
\end{proof}

\section{Egorov's Theorem for Semiclassical Operators}\label{sec4}

\begin{theorem}\label{Egorov}
{\rm (Semiclassical Egorov theorem)}
Given a semiclassical symbol $a\in\mathbb{A}'/\mathbb{A}'_N$, 
the composition
\begin{equation}\label{op-3}
  \Phi_{h,\varepsilon,N} Op_h(a) \Phi^{-1}_{h,\varepsilon,N},
\end{equation}
where $\Phi^{-1}_{h,\varepsilon,N}$ is a semiclassical quantized canonical transformation associated with $\Phi^{-1},$
is a semiclassical pseudodifferential operator with symbol 
\begin{equation}\label{eq-symb4}
\sigma(\Phi_{h,\varepsilon,N} Op_h(a) \Phi^{-1}_{h,\varepsilon,N})
\equiv \Bigl[1+\sum_{\substack{1\le k< N,\\ 0<|\alpha|+|\beta|\le 2k}}h^k \mu_{k,\alpha,\beta} D^\alpha_x D^\beta_\xi \Bigr](C^{-1})^*a 
\end{equation}
modulo symbols which induce $O(h^N)$-families.
 
Here $\varepsilon$ is chosen such that $a$ vanishes on the sets $\{|\xi|<2\varepsilon\}, C^{-1}\{|\xi|<2\varepsilon\}\subset T^*M$. Moreover, the coefficients $\mu_{k,\alpha,\beta}(x,\xi)$ are homogeneous functions in $\xi$ of  degree $|\beta|-k$,
and are expressed in terms
of the amplitudes and phase functions of $\Phi$ and $\Phi^{-1}$; they do not depend on the 
choice of the cut-off functions $\chi$, $\varepsilon$, etc. 
\end{theorem}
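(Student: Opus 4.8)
The plan is to reduce the assertion to a microlocal model computation, carry it out by two successive stationary-phase reductions, and recognize the outcome as a semiclassical pseudodifferential kernel. First I would cover $\graph C$ by conic charts in which $\Phi$ has a kernel of the form \eqref{ker1}. By Proposition~\ref{fio1} the operator $\Phi_{h,\varepsilon,N}$ is independent of the chosen representation modulo $O(h^{N})$-families and families with the properties of Lemma~\ref{negligible}, and the latter, composed with $Op_h(a)$ on the appropriate side, are $O(h^\infty)$ because $a$ vanishes for $|\xi|<2\varepsilon$ (Lemma~\ref{negligible} and its adjoint); the same remark applies to the choice of $\Phi^{-1}_{h,\varepsilon,N}$. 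Hence, exactly as in the discussion preceding Proposition~\ref{fio2}, I may assume that $\Phi$ and $\Phi^{-1}$ are represented by phase functions $\varphi(x,y,\theta)=S(x,\theta)-y\cdot\theta$ and $\varphi^{-1}(y,x',\tau)=y\cdot\tau-S(x',\tau)$, the second one being admissible because $C^{-1}\circ C=\mathrm{id}$ forces this generating function. A partition of unity on $\graph C$ reduces everything to this model, with model amplitudes $b\sim\sum_j h^jb_j$ for $\Phi$ and $c\sim\sum_k h^kc_k$ for $\Phi^{-1}$.

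Writing out the Schwartz kernel of $\Phi_{h,\varepsilon,N}Op_h(a)\Phi^{-1}_{h,\varepsilon,N}$ produces a five-fold oscillatory integral in $(\theta,y,\eta,z,\tau)$ with phase $\Psi=S(x,\theta)-y\cdot\theta+(y-z)\cdot\eta+z\cdot\tau-S(x',\tau)$. As in the proof of Proposition~\ref{fio2}, I would insert homogeneous cut-off functions $H$ localizing to $|\theta|\sim|\eta|\sim|\tau|$; outside this region an integration by parts in $y$, resp.\ $z$, together with \eqref{est.1} yields an $O(h^\infty)$-family, and on the localized region all amplitudes lie in standard symbol classes with support away from the zero section. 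First I integrate in $(y,\eta)$: the $(y,\eta)$-Hessian of $\Psi$ is $\left(\begin{smallmatrix}0&I\\I&0\end{smallmatrix}\right)$, so this is the exact composition of $\Phi$ with $Op_h(a)$ and yields a kernel $h^{-n}\int e^{\frac ih(S(x,\theta)-z\cdot\theta)}q(x,z,\theta,h)\,d\theta$ with $q\sim\sum_{j<N}h^jq_j$, $q_0(x,z,\theta)=b_0(x,z,\theta)a_0(z,\theta)$ and the lower-order terms obtained from $b$ and $a$ by the stationary-phase correction operators; the omitted tails of the amplitude are $O(h^N)$-families by Lemma~\ref{mapping}. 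Next I integrate in $(z,\tau)$: the $(z,\tau)$-Hessian of $\Psi$ equals $\left(\begin{smallmatrix}0&I\\I&-\partial^2_\tau S\end{smallmatrix}\right)$, which is nondegenerate, has vanishing signature and $|\det|=1$; the stationary point is $\tau=\theta$, $z=\partial_\theta S(x',\theta)$, at which $\Psi$ collapses to $S(x,\theta)-S(x',\theta)$. Stationary phase therefore gives, modulo $O(h^N)$-families, $K(x,x')=h^{-n}\int e^{\frac ih(S(x,\theta)-S(x',\theta))}\,e(x,x',\theta,h)\,d\theta$ with $e\sim\sum_{j<N}h^je_j$.

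Now writing $S(x,\theta)-S(x',\theta)=(x-x')\cdot\Xi(x,x',\theta)$ with $\Xi(x,x',\theta)=\int_0^1\partial_xS(x'+t(x-x'),\theta)\,dt$ and $\Xi(x,x,\theta)=\partial_xS(x,\theta)$, and using that $\varphi$ parametrizes the graph of a diffeomorphism, so that $\partial^2_{x\theta}S$ is invertible, the map $\theta\mapsto\xi=\Xi(x,x',\theta)$ is a diffeomorphism for $x$ near $x'$; changing variables turns $K$ into $h^{-n}\int e^{\frac ih(x-x')\cdot\xi}\sigma(x,x',\xi,h)\,d\xi$, and the standard $\Psi$DO calculus reduces the $(x,x')$-amplitude to a left symbol in $\mathbb A$, which vanishes near the zero section since $a$ does. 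On the diagonal, $\xi=\partial_xS(x,\theta)$ is equivalent to $C^{-1}(x,\xi)=(\partial_\theta S(x,\theta),\theta)$, whence the leading symbol equals $b_0c_0|_{\crit_\varphi}\cdot(C^{-1})^*a_0=(C^{-1})^*a_0$, using that $b_0c_0\equiv1$ on $\crit_\varphi$ — which is forced by $\Phi\Phi^{-1}=\mathrm{id}$ via Proposition~\ref{fio2}. For the full expansion, each application of stationary phase contributes at order $h^k$ a differential operator of order $\le 2k$ whose coefficients are built only from $S$ and the $b_j,c_k$ (not from $a$), acting on $(C^{-1})^*a$; rewriting the $\theta$-derivatives at the stationary point as $(x,\xi)$-derivatives of $(C^{-1})^*a$ via the chain rule — again with $a$-independent coefficients — and absorbing the Jacobian puts $\sigma$ in the form \eqref{eq-symb4}. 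That there is no pure multiplication term $h^k m_k(x,\xi)$ with $k\ge1$ follows from $\Phi_{h,\varepsilon,N}\,\mathrm{id}\,\Phi^{-1}_{h,\varepsilon,N}\equiv\mathrm{id}$: applying the formula to a symbol $a$ that is identically $1$ on a region forces $m_k\equiv0$ there, hence everywhere by homogeneity. Finally, the homogeneity of $\mu_{k,\alpha,\beta}$ in $\xi$ of degree $|\beta|-k$ is equivalent to the statement that, for $a$ homogeneous of degree $\delta$, the $h^k$-component of $\sigma$ is homogeneous of degree $\delta-k$, which holds by equivariance under the fibrewise $\mathbb R_+$-action, since $S$ is homogeneous of degree $1$ and the $b_j,c_k$ of degrees $-j,-k$; independence of $\chi$, $\varepsilon$ and the partition of unity follows from Proposition~\ref{fio1} and the fact that all cut-offs equal $1$ where $a$, hence $\sigma$, is supported.

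I expect the main obstacle to be the rigorous control of the remainders: showing that the discarded tails of the two stationary-phase expansions and the contributions from the region where $|\theta|,|\eta|,|\tau|$ are not comparable are genuine $O(h^N)$-families in the semiclassical Sobolev scale, in the presence of non-compact $\theta$-integration and the cut-off functions. This is precisely the type of estimate established in Lemmas~\ref{mapping} and~\ref{negligible} and in the integration-by-parts argument of Proposition~\ref{fio2}, which I would invoke repeatedly. A secondary, purely bookkeeping difficulty is extracting the precise form \eqref{eq-symb4} and the homogeneities; I would obtain these from the scaling and $T(1)=1$ arguments above rather than by computing stationary-phase coefficients explicitly.
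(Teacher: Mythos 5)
Your argument is correct in outline but follows a genuinely different route from the paper's. You reduce to the model phases $S(x,\theta)-y\cdot\theta$ and $y\cdot\tau-S(x',\tau)$ and then perform two successive stationary-phase evaluations (exact in $(y,\eta)$, then in $(z,\tau)$), finishing with the Kuranishi substitution $S(x,\theta)-S(x',\theta)=(x-x')\cdot\Xi$ and the change of variables $\theta\mapsto\xi=\Xi(x,x',\theta)$ to exhibit a pseudodifferential kernel; the structural claims (absence of pure multiplication terms $h^k m_k$, homogeneity of $\mu_{k,\alpha,\beta}$) you obtain by soft arguments ($\Phi\Phi^{-1}=\mathrm{id}$ via Proposition~\ref{fio2}, and fibrewise $\mathbb R_+$-equivariance). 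The paper instead works with the dual model \eqref{K1}--\eqref{K2}, writes the full amplitude $b(h;x,x',\xi)$ of the conjugated operator as a single oscillatory integral, brings the phase to the standard form $(z-z')\cdot\sigma+(x'-z')\cdot(\xi-\xi')$ by the Kuranishi trick applied to $S(y,\xi)-S(y',\xi')$ together with the substitution $y=M^{-1}_{\xi,\xi'}(z)$, and then extracts the expansion by iterated Taylor expansions with explicit integral remainders controlled via \cite[Lemma II.2.4]{K}. Your route is the more transparent, textbook-Egorov one and gets the qualitative structure of \eqref{eq-symb4} cheaply; the paper's route avoids large-parameter stationary phase altogether and is what yields the explicit bookkeeping that each $\sigma$-derivative produces both a power of $h$ and a drop in symbol order, i.e.\ the precise statement $\mu_{k,\alpha,\beta}\in S^{|\beta|-k}$, as well as the explicit verification that $\chi_p a=a\chi_q=a$ on the critical set.

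Two points in your proposal carry real weight and are only deferred, not done. First, your second stationary phase in $(z,\tau)$ is performed with $\theta$ ranging over a noncompact cone, simultaneously with the semiclassical limit; Lemmas~\ref{mapping} and~\ref{negligible} give boundedness, not asymptotic expansions, so you would still need the rescaling $\tau=|\theta|\tau'$ with effective parameter $h/|\theta|$ (as in the proof of Theorem~\ref{lem3}) to make the expansion and its remainder uniform — this is exactly the step the paper's Taylor-remainder argument is designed to replace. Second, your ``$T(1)=1$'' argument for the vanishing of the $|\alpha|=|\beta|=0$ terms requires identifying $\Phi_{h,\varepsilon,N}\,Op_h(a_0)\,\Phi^{-1}_{h,\varepsilon,N}$, for a classical $a_0$ equal to $1$ on a cone, with the semiclassical deformation of the classical conjugate $\Phi\,Op(a_0)\,\Phi^{-1}$; this is legitimate via Proposition~\ref{fio2} and the classical Egorov theorem, but it should be stated, since a symbol identically $1$ is not itself in $\mathbb A'$. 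With these two steps filled in, your proof goes through.
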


The proof refines (and relies on) the argument given by 
Martinez  \cite[Proof of Proposition 5.5.4]{Mart1} in that we are working with classical symbols and control both the expansion in $h$ and the orders of the symbols.

\paragraph{\bf Preliminaries} For the canonical transformation $C$ consider a point 
$(x_0,\xi_0) = C(y_0,\eta_0)$ in $T^*_0 M $.
Possibly after a change of coordinates, we may assume that 
locally near $(x_0,\xi_0; y_0,\eta_0)$,  the operators $\Phi$ and $\Phi^{-1}$ are given by  
Schwartz kernels $K^\Phi$ and $K^{\Phi^{-1}}$ of the form 
\begin{eqnarray}
\label{K1}
K^{\Phi} (x,y) &=& \int e^{i(x\cdot \eta-S(y,\eta))} p(x,y,\eta)\, d\eta, \text{ and }\\
\label{K2}
K^{\Phi^{-1}}(x,y) &=& \int e^{i(S(x,\eta)- y\cdot\eta)} q(x,y,\eta)\, d\eta
\end{eqnarray}
with amplitudes $p\sim\sum p_j$ and $q\sim\sum q_j$ of order zero (since now $d=n$).
Note that in contrast to the notation used before, $S$ here is a generating function 
for $C^{-1}$. We also note that $p(x,y,\eta)$ in \eqref{K1} can be assumed to vanish outside a conical neighborhood 
of $(x_0,y_0,\eta_0)$. Similarly, $q(x,y,\eta)$ in \eqref{K2} vanishes outside a conical neighborhood of $(y_0,x_0,\xi_0)$. 

We write $\tilde \Phi_{h,\varepsilon,N}$ and $\tilde \Phi^{-1}_{h,\varepsilon,N}$
for the operators with the Schwartz kernels 
\begin{eqnarray*}
\tilde K_{h,\varepsilon,N}^{\Phi} (x,y) &=&h^{-n} \int e^{\frac ih(x\cdot \eta-S(y,\eta))}\sum_{j<N}h^jp_j(x,y,\eta)\chi_p(x,y,\eta)\, d\eta\\
&=& \int e^{ i(x\cdot \eta-S(y,\eta))} p(h;x,y,\eta)\, d\eta
\text{ and }\\
\tilde K_{h,\varepsilon,N}^{\Phi^{-1}}(x,y) &=&
h^{-n}\int e^{\frac ih(S(x,\eta)-y\cdot \eta)}\sum_{j<N} h^jq_j(x,y,\eta)\chi_q(x,y,\eta)\, d\eta\\
&=& \int e^{i(S(x,\eta)-y\cdot \eta)} q(h;x,y,\eta)\, d\eta,
\end{eqnarray*}
where $\chi_p$ and $\chi_q$ are excision functions for $\Phi$ and $\Phi^{-1}$, 
respectively, with the corresponding properties in \eqref{chi}  and 
\begin{eqnarray*}
p(h;x, y,\eta) &=& \sum_{j<N} p_j(x,y,\eta)\chi_p(x,y,h\eta),\\ 
q(h;x, y,\eta) &=& \sum_{j<N} q_j(x,y,\eta)\chi_q(x,y,h\eta).
\end{eqnarray*}
We know by Proposition~\ref{fio1} that  
$\tilde \Phi_{h,\varepsilon,N}-\Phi_{h,\varepsilon,N}$ 
and $\tilde \Phi^{-1}_{h,\varepsilon,N}-\Phi^{-1}_{h,\varepsilon,N}$ are sums of 
$O(h^N)$-families and operators whose Schwartz kernels 
have the properties in Lemma \ref{negligible} for $C$ and $C^{-1}$, respectively. 
Since $a$ is assumed to vanish on the sets $\{|\xi|<2\varepsilon\}$, and $C^{-1}\{|\xi|<2\varepsilon\}\subset T_0^*M$, 
composition with $Op_h(a)$ furnishes  $O(h^N)$-families. 
As the statement only concerns powers of $h$ up to order $N-1$, it will be 
sufficient to work in the sequel with $\tilde \Phi_{h,\varepsilon,N}$
and $\tilde \Phi^{-1}_{h,\varepsilon,N}$.  

\paragraph{\bf The symbol of  $\tilde \Phi_{h,\varepsilon,N} Op_h(a)\tilde \Phi^{-1}_{h,\varepsilon,N}$} 
We suppose that $a(y,\eta)$ is supported in a sufficiently small conic neighborhood of $(y_0,\eta_0)$. 
Then 
\begin{eqnarray*}\lefteqn{Op_h(a) {\tilde \Phi}^{-1}_{h,\varepsilon,N}u(y) }\\
&=& (2\pi)^{-n}  \iint\!\!\iint e^{i(y-y')\cdot \eta}e^{i (S(y',\xi') -x'\cdot \xi')} 
a(y,h\eta) q(h;y',x',\xi') u(x')\, dx'd\xi'dy'd\eta
\end{eqnarray*}
and 
\begin{eqnarray*}
\tilde \Phi_{h,\varepsilon,N} Op_h(a) {\tilde\Phi}_{h,\varepsilon,N}^{-1} u(x)
&=&(2\pi)^{-n} \iint e^{i(x-x')\cdot \xi} b(h;x,x',\xi) u(x') \, dx'd\xi,
\end{eqnarray*}
where
\begin{eqnarray*}
b(h;x,x',\xi) &=& \iint\!\!\iint e^{i(y-y')\cdot \eta}e^{i(S(y',\xi') -S(y,\xi))} 
e^{ix'\cdot(\xi-\xi')}\\
&&\ \ \times p(h;x,y,\xi) 
a(y,h\eta) q(h;y',x',\xi')  dydy'd\eta d\xi'.
\end{eqnarray*}

Write 
$$S(y,\xi)-S(y',\xi') =M_1(y,y',\xi)(y-y') + M_2(y',\xi,\xi')(\xi-\xi')$$
with 
\begin{eqnarray*}
M_1(y,y',\xi) &=& \int_0^1 \partial_yS((1-s)y'+sy,\xi) \, ds;\\
M_2(y',\xi,\xi') &=& \int_0^1 \partial_\xi S (y',(1-s)\xi'+s\xi) \, ds.
\end{eqnarray*}
From the corresponding properties of $S$ we deduce that $M_1$ is one-homogeneous in $\xi$ and $M_2$ is zero-homogeneous 
in $(\xi,\xi')$. 
Since $p$ and $q$ vanish for small $|\xi|$ and $|\xi'|$, respectively, we can assume 
$S$ to be an element of the symbol class 
$S^1$, and $M_1$ and $M_2$ can be assumed to be elements of $S^1$ and $S^0$,
respectively, for $\xi,\xi'$ in small conic neighborhoods of $\xi_0$. 

With the translation $\eta\mapsto \eta+M_1$ we can rewrite $b$ in the form 
\begin{eqnarray*}
b(h;x,x',\xi) &=&\iint\!\!\iint e^{i(y-y')\cdot \eta}
e^{i(x'-M_2(y',\xi,\xi'))\cdot(\xi-\xi')}\\
&&\ \ \times p(h;x,y,\xi) 
a(y,h(\eta+M_1(y,y',\xi))) q(h;y',x',\xi')  dydy'd\eta d\xi'.
\end{eqnarray*}
Since $S$ is a generating function for $C^{-1}$, $M_1(y,y',\xi)$ is close to $\eta_0$  
for $y, y'$ close to $y_0$, and $\xi$ close to $\xi_0$.
We conclude that $(y,y',\eta,\xi) \mapsto a(y,\eta+M_1(y,y',\xi))$ then 
is a symbol of order zero.

Next we observe that, for $\xi,\xi'$ in the small conic neighborhood of $\xi_0$ and
$y'$ close to $y_0$, the map 
\begin{eqnarray}\label{M2}
y'\mapsto M_2(y',\xi,\xi') 
\end{eqnarray}
is invertible. Indeed, this follows from the fact that $y\mapsto \partial_\xi S(y,\xi_0)$ 
is a diffeomorphism from a neighborhood of $y_0$ to a neighborhood of $x_0$ and
the fact that, due to the $0$-homogeneity of $\partial_\xi S$, 
$$   \partial_\xi S(y,\xi)-  \partial_\xi S (y,\xi_0)=
 \partial_\xi S (y,\xi/|\xi|)- \partial_\xi S (y,\xi_0/|\xi_0|)$$
can be made arbitrarily small by choosing the conic neighborhood of $\xi_0$ small. 
We denote the inverse of \eqref{M2} by $M_{\xi,\xi'}^{-1}$. 
From the above consideration we see that $(z, \xi, \xi')\mapsto M_{\xi,\xi'}^{-1}(z)$
is homogeneous of degree zero in $(\xi,\xi')$; moreover, $(z,\xi,\xi')\mapsto M^{-1}_{\xi,\xi'}(z)$ can be regarded as an element in $S^0$. 
Letting $y'=M_{\xi,\xi'}^{-1}(z')$ and $y=M_{\xi,\xi'}^{-1}(z)$, the expression for $b$ becomes
\begin{eqnarray*}
b(h;x,x',\xi) &=& \iint\!\!\iint e^{i(M_{\xi,\xi'}^{-1}(z)-M^{-1}_{\xi,\xi'}(z'))\cdot\eta}
e^{i(x'-z')\cdot(\xi-\xi')}p(h;x,M_{\xi,\xi'}^{-1}(z),\xi)\\
&&\ \times \  
a(M_{\xi,\xi'}^{-1}(z),h(\eta+M_1(M_{\xi,\xi'}^{-1}(z),M_{\xi,\xi'}^{-1}(z'),\xi)))q(h;M_{\xi,\xi'}^{-1}(z'),x',\xi')\\
&&\  \times 
 \ |J_{M^{-1}_{\xi,\xi'}}(z')| \ |J_{M^{-1}_{\xi,\xi'}}(z)|\ dzdz'd\eta d\xi'.
\end{eqnarray*}
where $J_\bullet$ denotes the corresponding Jacobians. For 
$$V(z,z',\xi,\xi') = \int_0^1 \partial_x M^{-1}_{\xi,\xi'}(sz+(1-s)z')\, ds$$
we have 
$$ M^{-1}_{\xi,\xi'}(z) -  M^{-1}_{\xi,\xi'}(z') = 
V(z,z',\xi,\xi')(z-z').$$
Then $V$ is zero-homogeneous in $(\xi,\xi')$ and actually can be considered a symbol in 
$S^0$.  Write 
\begin{eqnarray}\lefteqn{\mbox{\ \ }
b(h;x,x',\xi) = 
\iint\!\!\iint e^{iV(z,z',\xi,\xi')(z-z')\cdot\eta+i(x'-z')\cdot(\xi-\xi')}p(h;x,M_{\xi,\xi'}^{-1}(z),\xi)\label{b}}\\
&&\ \times \  
a(M_{\xi,\xi'}^{-1}(z),h(\eta+M_1(M^{-1}_{\xi,\xi'}(z),M^{-1}_{\xi,\xi'}(z'),\xi)))\ q(h;M_{\xi,\xi'}^{-1}(z'),x',\xi')
\nonumber\\
&&\  \times 
 \ |J_{M^{-1}_{\xi,\xi'}}(z')| \ |J_{M^{-1}_{\xi,\xi'}}(z)|\ 
 dzdz'd\eta d\xi'
 \nonumber\\
 &=& \iint\!\!\iint e^{i(z-z')\cdot\sigma+i(x'-z')\cdot(\xi-\xi')}\tilde p(h;x,z,\xi,\xi')
 \nonumber\\
&&\ \times \  
\tilde a(h;z,z',\sigma,\xi,\xi')\tilde q(h;z',x',\xi,\xi')
\ \tilde J(z,z',\xi,\xi')\ 
dzdz'd\sigma d\xi',
\nonumber 
\end{eqnarray}
where 
\begin{eqnarray*}
\tilde p(h;x,z,\xi,\xi')&=& p(h;x,M_{\xi,\xi'}^{-1}(z),\xi), \\
\tilde a(h;z,z',\sigma,\xi,\xi')&=&a(M_{\xi,\xi'}^{-1}(z),
 h(V^t(z,z',\xi,\xi')^{-1}\sigma+M_1(M_{\xi,\xi'}^{-1}(z),M_{\xi,\xi'}^{-1}(z'),\xi))),\\
\tilde q(h;z',x',\xi,\xi')&=&q(h;M_{\xi,\xi'}^{-1}(z'),x',\xi'),\\
\tilde J(z,z',\xi,\xi')&=&|J_{M^{-1}_{\xi,\xi'}}(z')| \ |J_{M^{-1}_{\xi,\xi'}}(z)|\ 
|\det V(z,z',\xi,\xi')|^{-1}.
\end{eqnarray*}

\paragraph{\bf The asymptotic expansion of the symbol}  
We shall see that this indeed furnishes the desired expansion \eqref{eq-symb4}. 
In order to sketch the idea let 
$$c(h;z,z',x,x',\sigma, \xi,\xi')= \tilde p(h;x,z,\xi,\xi') 
\tilde a(h;z,z',\sigma,\xi,\xi')\tilde q(h;z',x',\xi,\xi')
\ \tilde J(z,z',\xi,\xi').$$ 
In a first step, we apply a Taylor expansion in $z$ at $z=z'$:
\begin{eqnarray*}
c(h;z,z',x,x',\sigma, \xi,\xi') 
&=& \sum_{|\alpha|<N}
\frac{1}{\alpha!} \partial_z^\alpha c(h;z,z',x,x',\sigma, \xi,\xi')_{|z=z'}(z-z')^\alpha, \\
&&+ r_N(h;z,z',x,x',\sigma, \xi,\xi')
\end{eqnarray*}
with 
\begin{eqnarray}\lefteqn{\mbox{\ \ \ \ }
\label{rN}
r_N(h;z,z',x,x',\sigma, \xi,\xi')}\\
&=& N\ \sum_{|\gamma|=N}
\frac{(z-z')^\gamma}{\gamma!} 
\int_0^1 (1-s)^{N-1}\partial^\gamma_zc(h;z'+s(z-z'),z',x,x',\sigma,\xi,\xi')\, ds.
\nonumber\end{eqnarray}
We can then decompose $c=c_1+c_2$, where $c_1$ contains the terms 
from the expansion and $c_2$ those from the remainder. 

Let us first consider $c_1$. 
Integration by  parts together with the evaluation of the oscillatory integral over $z$ and 
$\sigma$ shows that 
\begin{eqnarray}
\lefteqn{\nonumber
 \iint\!\!\iint e^{i(z-z')\cdot\sigma+i(z'-x')\cdot(\xi'-\xi)}
\partial^\alpha_z c(h;z,z',x,x',\sigma, \xi,\xi')_{|z=z'}(z-z')^\alpha\ 
 dzdz'd\sigma d\xi'}
\nonumber\\
 &=& \!\!\iint e^{i(z'-x')\cdot(\xi'-\xi)}
\partial^\alpha_z D^\alpha_\sigma c(h;z,z',x,x',\sigma, \xi,\xi')_{|z=z',\sigma=0}
 \,dz'd\xi'.
 \nonumber 
\end{eqnarray}
Assuming for the moment that the expansion makes sense, we can iterate the procedure 
by applying a Taylor expansion up to order $N-1$ in $z'$ at $z'=x'$. 
This yields an expansion for $b$ of the form 
\begin{eqnarray*}
b(h;x,x',\xi)\sim\sum_{|\beta|<N} \frac1{\beta!} \partial_{z'}^\beta D^\beta_{\xi'}
\Big(\sum_{|\alpha|<N}\frac1{\alpha!}    
\partial^\alpha_z D^\alpha_\sigma c(h;z,z',x,x',\sigma, \xi,\xi')_{|z=z',\sigma=0}
\Big)_{|z'=x', \xi'=\xi} 
\end{eqnarray*}  
up to the corresponding remainder terms.
Assuming also that $b$ is an amplitude in $S^0$, we find a corresponding symbol 
$\tilde b =\tilde b(h;x,\xi)$; it has the expansion 
$$\tilde b(h;x,\xi) \sim 
\sum_{\gamma }
\frac1{\gamma!}\partial^\gamma_{x'} D^\gamma _\xi b(h;x,x',\xi)_{|x=x'}.$$
Summing up we expect that  
\begin{eqnarray}\lefteqn{\label{expansion}\mbox{\ \ \ }
\sigma(\Phi_{h,\varepsilon,N} Op_h(a) \Phi^{-1}_{h,\varepsilon,N})\sim\sum_{|\alpha|,|\beta|,|\gamma|<N}\frac 1{\alpha!\beta!\gamma!} }\\
&&
\times D^\gamma_{\xi}\partial^\gamma_{x'} 
\Big( D^\beta_{\xi'}\partial^\beta_{z'}
\Big( D^\alpha_{\sigma}\partial^\alpha_{z} 
\Big(c(h; z,z',x,x',\sigma, \xi,\xi')\Big)_{|z=z',\sigma=0}
\Big)_{|z'=x', \xi'=\xi}\Big)_{|x'=x}
\nonumber
\end{eqnarray}
modulo symbols which induce $O(h^N)$-families.

It now remains to check two facts: 
\begin{itemize}
\item The terms in this expansion are actually of the form in \eqref{eq-symb4}
\item  The remainder terms give $O(h^N)$-families. 
\end{itemize} 

\paragraph{\bf The terms  in the expansion.}
We first note that 
\begin{eqnarray*}\lefteqn{
D_{\sigma} \tilde a (h;z,z',\sigma, \xi,\xi')_{|\sigma=0}}\\
&=&h\  D_{\eta} a\bigl(M_{\xi,\xi'}^{-1}(z), 
h M_1(M_{\xi,\xi'}^{-1}(z),M_{\xi,\xi'}^{-1}(z'),\xi)\bigr) (V^t(z,z',\xi,\xi'))^{-1}.
\end{eqnarray*}
Corresponding formulae hold for other derivatives with respect to the various variables.
Recalling that $(y,y',\xi)\mapsto M_1(y,y',\xi)$ and $(z,\xi,\xi') \mapsto M^{-1}_{\xi ,\xi'}(z) $
are actually symbols in $S^1$ and $S^0$, respectively, and that we can write, for example, 
$$\partial_\xi (M_{\xi,\xi'}^{-1} (z))= 
h  \partial_{\tilde \xi} (M_{\tilde \xi,\tilde \xi'}^{-1}(z))_{|\tilde \xi=h\xi, 
\tilde\xi' = h\xi'}$$
we find by induction that 
\begin{eqnarray*}
D^{\alpha}_{\xi}D^{\alpha'}_{\xi'}D^{\beta}_{z}D^{\beta'}_{z'}D^{\gamma}_{\sigma}
\tilde a (h;z,z',\sigma, \xi,\xi')_{|\sigma=0}=  
\breve a(z,z', h\xi, h\xi') h^{|\alpha|+|\alpha'| + |\gamma|},
\end{eqnarray*}
where  $\breve a$ is a symbol of order $-|\alpha|-|\alpha'|-|\gamma|$. 

Next consider derivatives of  
\begin{eqnarray}\label{tilde_p}
\tilde p(h; x,z',\xi,\xi') = \sum_{j<N}p_j(x,M^{-1}_{\xi,\xi'}(z'),\xi)\chi_p(x,M^{-1}_{\xi,\xi'}(z'),h\xi)
\end{eqnarray}
In view of the above observations, 
$$(x,z',\xi,\xi')\mapsto \sum_{j<N}p_j(x,M^{-1}_{\xi,\xi'}(z'), \xi)$$
defines a symbol of order zero off the zero section.

A corresponding argument applies to   $\tilde q$, and also 
$(z,z',\xi,\xi') \mapsto \tilde J(z,z',\xi,\xi')$ is seen to be a symbol of order zero smooth off the zero section. 

This shows that the expansion \eqref{expansion} contains the symbols of decaying order and increasing powers 
of $h$. 
We notice additionally that derivatives with respect to 
$\sigma$ always produce powers of $h$ together with decay; 
derivatives with respect to $\xi$ and $\xi'$ lower the order, but produce powers of $h$ 
only if they fall on $\tilde a$.  

As a consequence, we also obtain a corresponding expansion for the terms on the 
right hand side of \eqref{expansion}.
More is true:  For $\xi=\xi'$, we have $M_2(y',\xi,\xi) =  \partial_\xi S(y',\xi)$. 
Since $S$ is by construction a generating function for $C^{-1}$,  $M^{-1}_{\xi,\xi}(x)$ 
is the base point component of $C^{-1}(x,\xi)$ and thus 
$M_1(M^{-1}_{\xi,\xi}(x), M^{-1}_{\xi,\xi}(x ), \xi)$ 
is the component in the fiber, so that $a$ and its  derivatives in \eqref{expansion} are evaluated at 
$C^{-1}(x,h\xi)$.

We claim that in \eqref{expansion} after the substitutions $z=z'=x=x', \xi=\xi',\sigma=0$
we have $\chi_p= \chi_q\equiv 1$ on the support of $a$ so that all terms with derivatives of 
$\chi_p$ and $\chi_q$ are equal to zero
and in the remaining terms these functions can be replaced by $1$. 
Indeed, $a$ and its derivatives  in \eqref{expansion} 
are evaluated at the point $C^{-1}(x,h\xi) = (y,\partial_y S(y,h\xi))$, where $y$ is the solution of $\partial_\xi S(y,\xi)=x$.
Thus, $a$ vanishes, whenever we have 
\begin{eqnarray}\label{eq-f1}
 |\partial_y S(y,h\xi)|\le 2\varepsilon 
\end{eqnarray}
by the assumption in our theorem. 
Moreover, $\chi_p$ and its derivatives in \eqref{expansion} are evaluated 
at $(x,y,h\xi)$ and it is identically equal to $1$ in a neighborhood of the set 
$$
\{(x,y,\xi)\;|\; \alpha(x,y,h\xi) \text{ satisfies }|\xi'|\ge \varepsilon\},
$$
see \eqref{chi}. 
But in this case we have the phase function $\varphi(x,y,\xi)=x\cdot \xi-S(y,\xi)$.
Hence 
$$
\alpha(x,y,h\xi)=(x,\partial_x\varphi, y,-\partial_y\varphi) = (x,\xi,y,\partial_yS(y,\xi)).
$$
Thus, $\chi_p$ is identically equal to $1$, whenever we have 
\begin{eqnarray}\label{eq-f2}
 |\partial_y S(y,h\xi)|\ge \varepsilon. 
\end{eqnarray}
Thus, from \eqref{eq-f1} and \eqref{eq-f2} we obtain the desired statement that $\chi_pa=a$ in \eqref{expansion}. 
The proof of the identity $a\chi_q=a$ is similar. 

Recall now that, in order to obtain the symbol of the composition as a 
{\em semiclassical} symbol,
we have to undo the scaling of the covariable by $h$ and replace $h\xi$ by $\xi$. 
Hence the derivatives of $a$ are evaluated at $C^{-1}(x,\xi)$, those of $p$ and $q$ 
at $C^{-1}(x,\xi/h)$.  
As $p_j(x,y,\xi/h) = h^{j}p_j(x,y,\xi)$ with corresponding relations for the derivatives, 
each derivative of $p$ with respect to $\xi$ or $\xi'$ will contribute a factor $h$
after rescaling. 

It is well-known that the leading term in the expansion is $a(C^{-1}(x,\xi))$.
It  can be determined by considering  
the contribution for $|\alpha|=|\beta|=|\gamma|=0$. 
In view of the fact that $p$ and $q$ are the amplitudes in the Schwartz kernels 
\eqref{K1} and \eqref{K2} of $\Phi$ and $\Phi^{-1}$, we have 
$$p(x,M_{\xi,\xi}^{-1}(z),\theta)q(M_{\xi,\xi}^{-1}(z),x,\theta)
|\det \partial_z M_{\xi,\xi}^{-1}(z)|=1. 
$$
This shows the assertion.

\paragraph{\bf The remainder terms.}
In order to understand the contribution of the remainder term, consider first one of the terms
in the summation on the right hand side of \eqref{rN}. After an integration by parts, 
its contribution to the amplitude $b$ is given by 
\begin{eqnarray*}\lefteqn{
\frac N{\gamma!} 
\iint \iint\int_0^1 e^{i(z-z')\sigma + i(z'-x')(\xi-\xi')} 
(1-s)^{N-1}}\\
&&D^\gamma_\sigma\partial^\gamma_zc(h;z'+s(z-z'),z',x,x',\sigma,\xi,\xi')\, ds
dz d\sigma dz'd\xi'.
\end{eqnarray*}
Now we see from (an analog of)  \cite[Lemma II.2.4]{K} that the inner three integrals 
furnish an amplitude whose symbol seminorms can be estimated as before by those for 
$\tilde p$, $\tilde a$, and $\tilde q$. In particular, since we take $N$ derivatives with respect 
to $\sigma$, we obtain $N$ powers of $h$ and symbol order $-N$. 
The corresponding consideration holds for the Taylor expansion in the next step. 
Hence the remainder terms preserve the asymptotic expansion found above. 

The proof of Egorov's theorem is now complete.

\section{Trace Asymptotics for Fourier Integral Operators}\label{sec5}

Consider a semiclassical Fourier integral operator $\Phi_h$ associated with a homogeneous Lagrangian 
manifold $L\subset T^*_0(M\times M)$ with a Schwartz kernel equal to 
\begin{equation}\label{eq-fio1}
\Phi_h(x,x')=h^{-d/2-n/2}\int e^{\frac i h\varphi(x,x',\theta)}a(x,x',\theta)d\theta, \quad \theta\in \mathbb{R}^d, 
\end{equation} 
where we suppose that the support of the amplitude $a$ is sufficiently small such that in a neighborhood of this
support $\varphi$ is a nondegenerate phase function, which parametrizes $L$. In particular,  the critical set
\eqref{eq-crit1}
is a smooth submanifold and we have  a local diffeomorphism $\alpha$ as in \eqref{eq-a1},
defined in a small conical neighborhood of a point in $L$.

\begin{theorem}\label{lem3}
Suppose  that $L$ and the diagonal $\Delta=\{(x,p;x,p)\}\subset T^*_0(M\times M)$ intersect cleanly
and $a\in S^{m+(n-d)/2}$, where $m+n/2+d/2<0$. Then the trace $\tr \Phi_h$ exists and
admits an asymptotic expansion as $h\to 0$
\begin{equation}\label{eq-5}
 \tr(\Phi_{h})\sim h^{-\dim (L\cap\Delta)/2}\sum_{j\ge 0} \alpha_jh^j, \quad \text{ where } \alpha_j=\int_{L\cap\Delta}m_j .
\end{equation}
Here the   smooth densities $m_j$ are identically equal to zero in a small neighborhood of the
zero section and the integral converges absolutely at infinity. Moreover,  locally $m_j$ can be expressed in terms of a finite number of derivatives of the phase and the amplitude.  
\end{theorem}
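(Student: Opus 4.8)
The plan is to reduce the trace to a single oscillatory integral over a model neighborhood and then apply the stationary phase method in the normal directions to the clean intersection $L\cap\Delta$. First I would note that, since $a\in S^{m+(n-d)/2}$ with $m+n/2+d/2<0$, the Schwartz kernel \eqref{eq-fio1} is continuous near the diagonal and decays at infinity along the fiber fast enough that $\Phi_h$ is trace class with $\tr\Phi_h=\int_M \Phi_h(x,x)\,dx$; the integral converges absolutely because the amplitude vanishes near the zero section and decays in $\theta$. Restricting to $x=x'$ gives, after a partition of unity subordinate to a cover by coordinate charts and localizing the amplitude to small conical pieces (using that traces are additive and that off-diagonal or non-stationary contributions are $O(h^\infty)$ by nonstationary phase, exactly as in the integration-by-parts arguments of Lemmas~\ref{mapping} and \ref{negligible}),
$$
\tr\Phi_h = h^{-d/2-n/2}\int\!\!\int e^{\frac ih\varphi(x,x,\theta)}a(x,x,\theta)\,d\theta\,dx + O(h^\infty).
$$

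Next I would analyze the phase $\Psi(x,\theta):=\varphi(x,x,\theta)$ on the combined variable $(x,\theta)$. Its critical set is exactly $\{\partial_\theta\varphi=0,\ \partial_x\varphi(x,x,\theta)+\partial_{x'}\varphi(x,x,\theta)=0\}$, which under the diffeomorphism $\alpha$ of \eqref{eq-a1} corresponds to the set where $\xi=\xi'$, i.e.\ to $\alpha^{-1}(L\cap\Delta)$. The clean intersection hypothesis says precisely that this critical set is a smooth submanifold of the expected dimension and that the Hessian of $\Psi$ is nondegenerate transverse to it; this is the input that makes the clean stationary phase formula (see e.g.\ \cite[Theorem 7.7.5]{HIV} or the version in \cite{GuSt4}) applicable. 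Applying it, with $\dim$ of the critical manifold equal to $\dim(L\cap\Delta)$ once one accounts for the $d$ fiber variables $\theta$ and the homogeneity, produces an asymptotic expansion
$$
\tr\Phi_h \sim h^{-d/2-n/2}\, h^{(d+n-\dim(L\cap\Delta))/2}\sum_{j\ge0}\alpha_j h^j
= h^{-\dim(L\cap\Delta)/2}\sum_{j\ge0}\alpha_j h^j,
$$
where $\alpha_j$ is the integral over the critical manifold of a density built from $a$, the transverse Hessian of $\Psi$, and finitely many derivatives of $\varphi$ and $a$ — so $\alpha_j=\int_{L\cap\Delta}m_j$ with $m_j$ supported away from the zero section, as claimed.

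The two points that need care are, first, the bookkeeping of homogeneity and powers of $h$: the phase is $1$-homogeneous in $\theta$, so the $\theta$-integral is genuinely noncompact and one must combine the stationary phase in the transverse fiber directions with the scaling $\theta\mapsto\theta/h$ (as already used just after \eqref{ker2}) to see that the $\theta$-integration contributes the absolutely convergent integral over the conical critical manifold rather than a divergent one; the hypothesis $m+n/2+d/2<0$ is exactly what guarantees convergence at infinity of the leading and all subsequent densities. Second — and this is the real obstacle — one must verify that the clean intersection of $L$ with $\Delta$ in $T_0^*(M\times M)$ translates into the clean-critical-manifold condition for $\Psi(x,\theta)$ on the level of the phase function, uniformly as one moves along the (noncompact) intersection, and that the resulting density is globally well defined and independent of the choices of phase function and partition of unity. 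Coordinate-invariance of each $m_j$ follows because $\tr\Phi_h$ is intrinsically defined, so any two local computations must agree; I would spell this out and then remark, as the theorem states, that specializing to the identity relation ($L=\Delta$) or to a graph recovers the expansions of \cite{StSh78,GuSt4}.
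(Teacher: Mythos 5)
Your overall strategy coincides with the paper's: write $\tr\Phi_h$ as the diagonal integral \eqref{eq-tr3}, identify the stationary set of $\psi(x,\theta)=\varphi(x,x,\theta)$ with $\alpha^{-1}(L\cap\Delta)$, deduce transverse nondegeneracy of the Hessian from the clean intersection (the paper proves this as a separate lemma, computing $T(L\cap\Delta)=TL\cap T\Delta$ and pulling back through $\alpha_*$ — a short computation you assert but should carry out), and then apply stationary phase. The exponent bookkeeping you give is also consistent with the paper's.

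However, there is a genuine gap at exactly the point you yourself flag as ``the real obstacle'': the critical manifold and the support of the amplitude are \emph{conic and noncompact} in $\theta$, so the clean stationary phase theorem you invoke (which requires compactly supported amplitudes) does not apply as stated, and you never supply the mechanism that makes it work. The substitution $\theta\mapsto\theta/h$ that you point to does not help here — it removes $h$ from the phase altogether and is the device used for the kernel representation \eqref{ker2}, not a way to set up a stationary phase with a compact critical set. The paper's actual resolution is to pass to polar coordinates $\theta=r\omega$, so that
$\tr\Phi_h=h^{-d/2-n/2}\int_0^\infty r^{d-1}I(r,h')\,dr$ with $h'=h/r$, where $I(r,h')$ is an oscillatory integral over $(x,\omega)$ whose critical set $\mathcal C_0=\mathcal C\cap\{|\theta|=1\}$ is compact and independent of $r$; one then applies stationary phase to $I(r,h')$ \emph{uniformly in the auxiliary parameter} $r$, verifies that each coefficient $r^{d-1-j-\codim\mathcal C_0/2}A_{2j}a(x,x,r\omega)$ is $O(r^{d-1+m+(n-d)/2})$ so that the $r$-integral converges absolutely precisely under the hypothesis $m+n/2+d/2<0$, and controls the remainder via the explicit stationary-phase error bound before integrating in $r$. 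Without this radial--angular splitting (or an equivalent device) your argument does not produce the expansion \eqref{eq-5}, and the uniformity in $r$ of both the expansion and the remainder is the substantive technical content of the proof, not a routine verification.
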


\begin{remark}
This result is  close to Theorem 2 in~\cite{StSh78}, where a formula for the  leading term of the asymptotic is stated,
and a sketch of the proof is given. For the algebraic indices, we need the existence of a full asymptotic expansion 
and not just the leading term. Moreover, it turns out that special   estimates at infinity in
$L\cap \Delta$ are necessary, since the critical set and the support of the amplitude 
function are noncompact so that  we can not apply  the stationary phase method directly.
\end{remark}
\begin{proof}
  
1.  
The integral in \eqref{eq-fio1} is absolutely convergent for any given $h$
by our assumption on the order of the amplitude (since $m+(n-d)/2<-d$), and is an oscillatory integral as $h\to 0$. 
Since the integral in \eqref{eq-fio1} is absolutely convergent and depends continuously on $x,x',$ it follows that
$\Phi_h$ is of trace class and its trace is equal to
\begin{equation}\label{eq-tr3}
 \tr\Phi_{h}=h^{-d/2-n/2}\iint e^{\frac i h \varphi(x,x,\theta)}a(x,x,\theta) dxd\theta. 
\end{equation}
Unfortunately, we can not compute this integral directly by the
stationary phase method, since  the support of the amplitude is noncompact and so is 
the set of stationary points.

2.  Let us compute the stationary point set in \eqref{eq-tr3}. Denote the phase  by 
$\psi(x,\theta)=\varphi(x,x,\theta)$.  We identify the
set of stationary points of $\psi$ and the set
\begin{equation}\label{eq-cc1}
 \mathcal{C}=\{(x,x,\theta)\;|\; \partial_\theta \psi(x,\theta) =0\}\subset \crit_\varphi
\end{equation}
using the mapping  $(x,\theta)\mapsto (x,x,\theta)$.
Since locally we have equality of the sets $\alpha(\mathcal{C})=L\cap\Delta$ 
(this follows from \eqref{eq-a1} and \eqref{eq-cc1}) and the intersection of $L$ and $\Delta$ is clean, we obtain that $\mathcal{C}$ is a submanifold.

3. Let us now show that the phase in \eqref{eq-tr3} is nondegenerate in all directions transverse to $\mathcal{C}$. This follows from the following lemma.
\begin{lemma}
Given a point $(x_0,\theta_0)\in\mathcal{C}$, we have 
$$
T_{x_0,\theta_0}\mathcal{C}=\ker \Hess_{x_0,\theta_0}\psi,
$$
where $\Hess_{x_0,\theta_0} \psi$ is the Hessian of the function $\psi$ evaluated at the  point $(x_0,\theta_0).$
\end{lemma}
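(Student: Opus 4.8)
The plan is to turn the statement into a piece of pointwise linear algebra that links $\Hess_{x_0,\theta_0}\psi$ to the tangent spaces of $L$ and $\Delta$, and then to invoke cleanness of the intersection $L\cap\Delta$. Work in a coordinate chart near $(x_0,\theta_0)$, write $W=(x,x')$, and let $j\colon(x,\theta)\mapsto(x,x,\theta)$ be the diagonal embedding, so that $\psi=\varphi\circ j$ near $p_0:=(x_0,x_0,\theta_0)$, a point of $\crit_\varphi$. Since $j$ is linear, the chain rule gives $\Hess_{x_0,\theta_0}\psi=J^{t}\,(\Hess_{p_0}\varphi)\,J$ with $J=dj$, and expanding the blocks of $\Hess_{p_0}\varphi$ (written $\varphi_{xx},\varphi_{xx'},\varphi_{x'x'},\varphi_{x\theta},\varphi_{x'\theta},\varphi_{\theta\theta}$) one gets $\psi_{xx}=\varphi_{xx}+\varphi_{xx'}+\varphi_{x'x}+\varphi_{x'x'}$, $\psi_{x\theta}=\varphi_{x\theta}+\varphi_{x'\theta}$ and $\psi_{\theta\theta}=\varphi_{\theta\theta}$. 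Hence $(\delta x,\delta\theta)\in\ker\Hess_{x_0,\theta_0}\psi$ amounts to two groups of equations: the \emph{$\theta$-group} $(\varphi_{\theta x}+\varphi_{\theta x'})\delta x+\varphi_{\theta\theta}\delta\theta=0$, and the \emph{$x$-group} $(\varphi_{xx}+\varphi_{xx'}+\varphi_{x'x}+\varphi_{x'x'})\delta x+(\varphi_{x\theta}+\varphi_{x'\theta})\delta\theta=0$.

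Next I would identify these two groups geometrically. By nondegeneracy of $\varphi$, the set $\crit_\varphi=\{\partial_\theta\varphi=0\}$ is a smooth manifold with $T_{p_0}\crit_\varphi=\ker[\,\partial_\theta\partial_W\varphi\ \ \varphi_{\theta\theta}\,]$, and since $\partial_\theta\partial_W\varphi$ evaluated on $\delta W=(\delta x,\delta x)$ equals $(\varphi_{\theta x}+\varphi_{\theta x'})\delta x$, the $\theta$-group says exactly that the lift $u:=J(\delta x,\delta\theta)=(\delta x,\delta x,\delta\theta)$ lies in $T_{p_0}\crit_\varphi$. Granting this, set $p:=\alpha(p_0)\in L\cap\Delta$ and $v:=d\alpha_{p_0}(u)\in T_pL$; recall $\alpha$ is a local diffeomorphism of $\crit_\varphi$ onto an open subset of $L$, so $d\alpha_{p_0}$ is injective on $T_{p_0}\crit_\varphi$. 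Because $\alpha=(x,\partial_x\varphi,x',-\partial_{x'}\varphi)$ acts as the identity on the base variables $W$, the base components of $v$ are automatically equal; and a direct computation of the fibre components of $d\alpha_{p_0}(u)$ shows that the remaining requirement $\delta\xi=\delta\xi'$ for $v\in T_p\Delta$ is precisely the $x$-group of equations (here is where the sign in the $-\partial_{x'}\varphi$ slot must be tracked carefully). Consequently $d\alpha_{p_0}\circ J$ maps $\ker\Hess_{x_0,\theta_0}\psi$ isomorphically onto $T_pL\cap T_p\Delta$.

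The hypotheses now finish the argument. Cleanness of $L\cap\Delta$ gives $T_pL\cap T_p\Delta=T_p(L\cap\Delta)$; on the other hand $\alpha\circ j$ maps $\mathcal C$ diffeomorphically onto $L\cap\Delta$ near $p$ (this is the identification $\alpha(\mathcal C)=L\cap\Delta$ used just before the lemma), so $d(\alpha\circ j)_{x_0,\theta_0}$ carries $T_{x_0,\theta_0}\mathcal C$ isomorphically onto $T_p(L\cap\Delta)$. Comparing, $\dim\ker\Hess_{x_0,\theta_0}\psi=\dim (L\cap\Delta)=\dim T_{x_0,\theta_0}\mathcal C$. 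Finally the inclusion $T_{x_0,\theta_0}\mathcal C\subseteq\ker\Hess_{x_0,\theta_0}\psi$ is immediate, since $d\psi$ vanishes identically along the critical manifold $\mathcal C$: differentiating $\nabla\psi$ along a curve in $\mathcal C$ through $(x_0,\theta_0)$ with velocity $v$ yields $\Hess_{x_0,\theta_0}\psi\cdot v=0$. An inclusion of subspaces of equal dimension is an equality, which is the claim.

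The main obstacle is not conceptual but organizational: all the substance is supplied by nondegeneracy of $\varphi$, cleanness of $L\cap\Delta$, and the identification $\alpha(\mathcal C)=L\cap\Delta$. The one place that genuinely needs care is the middle step — expanding $\Hess\psi=J^{t}\Hess\varphi\,J$ in blocks, keeping the sign convention in $\alpha$ straight, and checking that the $x$-group of kernel equations matches the fibre-equality condition defining $T_p\Delta$ (and not some twisted variant). Once the blocks are written out, each matching is a routine verification.
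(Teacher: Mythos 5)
Your proof is correct and uses the same ingredients as the paper's: the identification $T\crit_\varphi=\ker d(\partial_\theta\varphi)$ from nondegeneracy of $\varphi$, the explicit form of $d\alpha$, and cleanness in the form $T_p(L\cap\Delta)=T_pL\cap T_p\Delta$, with the block computation of $\Hess\psi=J^{t}(\Hess\varphi)J$ matching the paper's equations $d(\partial_\theta\psi)=0$, $d(\partial_x\psi)=0$. The only difference is organizational — the paper writes $T\mathcal{C}=(\alpha_*)^{-1}T(L\cap\Delta)$ as a direct chain of equalities, whereas you close the argument with the easy inclusion $T\mathcal{C}\subseteq\ker\Hess\psi$ plus a dimension count — and both are valid.
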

\begin{proof} 
In this proof, we deal with tangent spaces at the points $(x_0,x_0,\theta)\in\mathcal{C}$ and 
$\alpha(x_0,x_0,\theta)\in L$. For brevity, we omit these points in the notation.

Note that $TC_\varphi=\ker d(\partial_\theta\varphi)$, since $\varphi$ is a nondegenerate phase function.  Hence, 
$$  
TL=\alpha_*(TC_\varphi) 
=\Bigl\{(X,d(\partial_{x}\varphi)(X,X',\Theta), X',-d(\partial_{x'}\varphi)(X,X',\Theta))\;|\; (X,X',\Theta)\in TC_\varphi\Bigr\}.
$$
Since $L$ and $\Delta$ intersect cleanly,  we have $T(L\cap \Delta)=TL\cap T\Delta$, and we get
$$
T(L\cap \Delta)=
\left\{\bigl(X,d(\partial_{x}\varphi)(X,X,\Theta), X,d(\partial_{x}\varphi)(X,X,\Theta)\bigr)\;
\left|\; 
\begin{array}{c}d(\partial_\theta\varphi)(X,X,\Theta)=0\text{ and}\\
d(\partial_{x}\varphi+\partial_{x'}\varphi)(X,X,\Theta)=0
\end{array}
 \right.
 \right\}.
$$
Hence, $T\mathcal{C}=(\alpha_*)^{-1}T(L\cap \Delta)$ is equal to 
$$
 T\mathcal{C}=\{(X,\Theta)\;|\; (X,\Theta)\in \ker d (\partial_\theta\psi); (X,\Theta)\in \ker d(\partial_x\psi )\}.
$$
This equality proves the lemma, since its right hand side is equal to the kernel of the Hessian of $\psi$.
\end{proof}

3. To apply the stationary phase method, write the integral \eqref{eq-tr3} in spherical 
coordinates  
$$
 \theta=r\omega, \qquad r>0, |\omega|=1,\qquad  d\theta=r^{d-1}drd\omega,
$$
where $d\omega$ is the volume form on $\mathbb{S}^{d-1}$.
Due to the homogeneity of the phase function 
\begin{eqnarray}\label{eq-tr4}\tr\Phi_{h}&=&h^{-d/2-n/2}\iiint e^{\frac {ir} h \varphi(x,x,\omega)}a (x,x,r\omega) r^{d-1} dx drd\omega\\
& \equiv& h^{-d/2-n/2} \int_0^\infty  r^{d-1} I(r,h')dr, \nonumber
\end{eqnarray}
where $h'=h/r$ and
\begin{equation}\label{eq-tr5}
 I(r,h')= \iint e^{\frac {i} {h'} \varphi(x,x,\omega)}a (x,x,r\omega) dxd\omega.
\end{equation}
This is an oscillatory integral with  parameter $h'\to 0$. Note also that its amplitude depends
on an additional positive parameter $r>\varepsilon$, where $\varepsilon$ is chosen such that $a(x,x,r\omega)\equiv 0$, whenever $r<\varepsilon$. Since the phase function in the latter integral
is independent of $r$, the stationary set of the integral \eqref{eq-tr5} also does not depend on $r$
and we can obtain uniform estimates in $r$ of this integral using the stationary phase method, see e.g.~\cite{Zwo1}.
Note that we can apply the stationary phase method in this situation, when we have a smooth compact  manifold of stationary points of the phase and the Hessian of the phase
is nondegenerate in the normal directions to  this submanifold. 
Indeed, since the phase function in \eqref{eq-tr5} is just the restriction of the phase function in \eqref{eq-tr3}
to the sphere $|\theta|=1$, it follows that this phase function is nondegenerate on the critical set 
$\mathcal{C}_0=\mathcal{C}\cap \{|\theta|=1\}$, where $\mathcal{C}$ was defined in  \eqref{eq-cc1}. 

Thus, we obtain the asymptotic expansion as $h'\to 0$
\begin{equation}\label{eq-exp4}
I(r,h')\sim \sum_{j\ge 0} h'^{j+\codim \mathcal{C}_0 /2}\int_{\mathcal{C}_0} \bigl[A_{2j} a(x,x,r\omega)\bigr]\vol_{\mathcal{C}_0}  , 
\end{equation}
from the stationary phase formula, where $A_{2j}=A_{2j}(x,\omega,D_x, D_ \omega)$ are linear differential operators
with smooth coefficients of order $\le 2j$ and $\vol_{\mathcal{C}_0}$ denotes a volume form on $\mathcal{C}_0$.
This formula has no oscillatory exponential factors, since the phase is equal to zero on $\mathcal{C}_0$:
indeed,  $\varphi(x,x,\theta)$ is expressed in terms of the differential at this point by Euler's formula, 
but the differential is zero, hence, so is the phase.
Substituting \eqref{eq-exp4} in \eqref{eq-tr4}, we formally obtain the desired asymptotic expansion \eqref{eq-5}
\begin{multline}\label{eq-m3}
\tr(\Phi_{h})=  h^{-d/2-n/2} \int_0^\infty  r^{d-1} I(r,h')dr\sim\\
\sim \int_0^\infty   \sum_{j\ge 0}r^{d-1-j-\codim \mathcal{C}_0/2} h^{-d/2-n/2+j+\codim \mathcal{C}_0/2}
 \left(
   \int_{\mathcal{C}_0} A_{2j} a(x,x,r\omega) \vol_{\mathcal{C}_0} 
 \right)dr=\\
 =h^{-(\dim L\cap\Delta )/2}\sum_{j\ge 0} h^{j}\int_{\mathcal{C}} r^{d-1-j-\codim \mathcal{C}_0/2} A_{2j}a(x,x,r\omega) \vol_{\mathcal{C}_0}dr .
\end{multline}
To complete the proof of Theorem~\ref{lem3},  it suffices  to show that all integrals in \eqref{eq-exp4}  converge absolutely
and to estimate the error terms in the asymptotic expansions. First,  the convergence    at $r=0$  is trivial, since the amplitude is identically zero in a neighborhood of the zero section $\{\theta=0\}$. Second, one checks by 
explicit differentiation  that 
$$
 |A_{2j} a(x,r\omega)| \le C r^{m+(n-d)/2} \quad \text{uniformly in }r\ge\varepsilon
$$
Hence, the integrand in the $j$-th term in  \eqref{eq-m3} is of the order 
$$
 O(r^{d-1+m +(n-d)/2-j-\codim \mathcal{C}_0/2})\le C r^{d-1+m+(n-d)/2}
$$ 
and its integral  with respect to $r$ absolutely
converges, since $m+n/2+d/2<0$ by  assumption. Finally, we estimate the remainders in the asymptotic expansions. 
We use the estimate of the difference between $I(r,h')$ and the first $N$ terms in the sum \eqref{eq-exp4}, see~\cite[Theorem 3.16]{Zwo1}, and obtain that this difference  is bounded by an expression of the form
$$  
C_N h'^{N+\codim \mathcal{C}_0/2}\iint  \max_{(x,x,r\omega)\in \supp a, |\alpha|+|\beta|\le 2N+\codim\mathcal{C}_0+1} \left|\frac{\partial^{\alpha+\beta}}{\partial x^\alpha
 \partial\omega^\beta} a(x,x,r\omega)\right|dxd\omega
$$ 
This expression is of the order ${h'}^{N+\codim\mathcal{C}_0/2}r^{m+(n-d)/2}$ for $r$ large. Integration of  this estimate with respect to $r$ shows that \eqref{eq-m3} is indeed an asymptotic expansion as $h\to 0$.

The proof of Theorem~\ref{lem3} is now complete. 
\end{proof}

\paragraph{\bf Application to quantized canonical transformations.} 

Let $\Phi$ be a quantized canonical transformation associated with the homogeneous canonical transformation
$$
 C:T^*_0M\to T^*_0M.
$$
And suppose that $C$ is of finite order ($C^k=Id$ for some $k\ge 1$). Then $C$ 
is   {\em nondegenerate} in the sense that its fixed point set (denoted by $T^*_0M^C$) is a smooth submanifold
and  at each point  in $T^*_0M^C$ we have (cf. \cite{AtBo1})
\begin{equation}\label{def11}
 \ker(1-dC)= T(T^*_0M^C),
\end{equation}
where $dC$ is the differential of $C$, i.e., the eigenspace with  eigenvalue equal to $1$ coincides with the tangent space to the fixed point set.
  
A direct computation shows that \eqref{def11} is equivalent  to the condition that the intersection 
$$
 \graph C\cap \Delta\subset T^*(M\times M)
$$ 
of the graph of $C$ and the diagonal in the product is  clean.

Thus, we can apply Theorem~\ref{lem3} to operators of the form 
$$
 Op_h(a )\Phi_{h,\varepsilon,N},
$$
where $a\in\mathbb{A}'$ is  a semiclassical symbol of sufficiently negative order (it suffices to take $\ord a<-2\dim M$), and we obtain the asymptotic expansion
\begin{equation}\label{eq-5rr}
 \tr(Op_h(a)\Phi_{h,\varepsilon,N})\sim h^{-\dim  T^*M^C/2}\sum_{j\ge 0} \alpha_jh^j, \qquad \text{ where } \alpha_j=\int_{T^*M^C}m_j
\end{equation}
in integer powers of $h$ (indeed, by \cite{Fds15}, the fixed point sets $T^*M^C$ are   even-dimensional).  
The coefficients $\alpha_j$ in \eqref{eq-5rr} do not depend on the choice of $\Phi_{h,\varepsilon,N}$ up to $j=N-1$. This is proved using the following lemma. 

\begin{lemma}\label{lemma-trace} 
 \begin{enumerate}
  \item If $a\in S^m, m\le 0$ and $j\ge 0$, then  $h^jOp_h(a)$ is an $O(h^N)$-family, where $N=\min(j,-m)$;
  \item If $A_h$ is an $O(h^N)$-family and $N>\dim M$, then the operator $A_h:L^2(M)\longrightarrow L^2(M)$  is of trace class and we have
\begin{equation}\label{eq-tra1}
 \tr A_h=O(h^{N-\dim M}).
\end{equation}
 \end{enumerate}
\end{lemma}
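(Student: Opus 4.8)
The lemma collects two routine facts of semiclassical calculus, and I would establish them separately.

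For part (1), I would unwind the definition: it suffices to show, for each $s\in\mathbb R$, that $h^jOp_h(a)\colon H^s_h(M)\to H^{s+N}_h(M)$ has norm $O(h^N)$, where $N=\min(j,-m)$. The plan is to first record the uniform-in-$h$ Sobolev mapping property of semiclassical pseudodifferential operators, namely that for $a\in S^m$ with $m\le 0$ the operator $Op_h(a)\colon H^s_h(M)\to H^{s-m}_h(M)$ is bounded with a bound independent of $h\in(0,1]$ (this is standard, see e.g.\ \cite{Mart1,Zwo1}). I would sketch the mechanism: after the substitution $\xi\mapsto h\xi$, the conjugated operator $(h^2\Delta+1)^{(s-m)/2}Op_h(a)(h^2\Delta+1)^{-s/2}$ is the ordinary quantization of a symbol bounded in $S^0$ uniformly in $h$, because $|\partial^\gamma_\xi[a(x,h\xi)]|\le C_\gamma h^{|\gamma|}(h^2|\xi|^2+1)^{(m-|\gamma|)/2}\le C_\gamma(|\xi|^2+1)^{-|\gamma|/2}$ (using $m\le 0$ and $h(|\xi|^2+1)^{1/2}\le (h^2|\xi|^2+1)^{1/2}$ for $h\le 1$), and the further symbolic composition with the $(h^2\Delta+1)$-powers only produces lower-order, hence uniformly bounded, corrections; Calderón--Vaillancourt then yields the $h$-independent $L^2$-bound. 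Granting this, $h^jOp_h(a)\colon H^s_h\to H^{s-m}_h$ has norm $O(h^j)$. Since $N\le -m$ forces $s-m\ge s+N$, the inclusion $H^{s-m}_h\hookrightarrow H^{s+N}_h$ has norm $\le 1$ (its Fourier multiplier $(h^2|\xi|^2+1)^{(N+m)/2}$ is $\le 1$). Composing and using $N\le j$ and $h\le 1$ gives $\|h^jOp_h(a)\|_{\mathcal B(H^s_h,H^{s+N}_h)}=O(h^j)=O(h^N)$.

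For part (2), set $n=\dim M$ and $\Lambda_h=(h^2\Delta+1)^{-N/2}$. By the definition of the semiclassical Sobolev norm, $\Lambda_h$ is a surjective isometry of $L^2(M)=H^0_h(M)$ onto $H^N_h(M)$. Taking $s=0$ in the hypothesis, $A_h\colon L^2(M)\to H^N_h(M)$ has norm $O(h^N)$, so $\Lambda_h^{-1}A_h\colon L^2(M)\to L^2(M)$ is bounded with norm $O(h^N)$ and $A_h=\Lambda_h(\Lambda_h^{-1}A_h)$ as operators on $L^2(M)$. It then remains to control the trace norm of $\Lambda_h$. With $0=\lambda_0\le\lambda_1\le\cdots$ the eigenvalues of $\Delta$, we have $\|\Lambda_h\|_{\mathrm{tr}}=\sum_{k\ge 0}(h^2\lambda_k+1)^{-N/2}$; I would bound this using Weyl's law $\#\{k:\lambda_k\le\mu\}\le C(1+\mu)^{n/2}$, splitting into the indices with $h^2\lambda_k<1$ (at most $\sim h^{-n}$ of them, each term $\le 1$) and those with $h^2\lambda_k\in[2^\ell,2^{\ell+1})$, $\ell\ge0$ (at most $\sim(2^\ell h^{-2})^{n/2}$ of them, each term $\le 2^{-\ell N/2}$), obtaining $\|\Lambda_h\|_{\mathrm{tr}}\le Ch^{-n}\bigl(1+\sum_{\ell\ge0}2^{\ell(n-N)/2}\bigr)=O(h^{-n})$, the series converging precisely because $N>n=\dim M$. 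Hence $A_h$ is trace class with $\|A_h\|_{\mathrm{tr}}\le\|\Lambda_h\|_{\mathrm{tr}}\,\|\Lambda_h^{-1}A_h\|_{\mathcal B(L^2(M))}=O(h^{N-n})$, and therefore $|\tr A_h|\le\|A_h\|_{\mathrm{tr}}=O(h^{N-\dim M})$.

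I do not expect a genuine obstacle. If I had to single out the most delicate point, it is making precise, in part (1), that conjugation by the powers of $(h^2\Delta+1)^{1/2}$ together with the attendant symbolic composition keeps the symbol bounded in $S^0$ uniformly in $h$, so that Calderón--Vaillancourt applies with an $h$-independent constant. Everything else reduces to Weyl's law and standard trace-ideal bookkeeping (the product of a trace-class operator and a bounded operator is trace class, with the corresponding product estimate for trace norms).
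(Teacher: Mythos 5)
Your proof is correct and follows essentially the same route as the paper: part (1) by factoring through $H^{s-m}_h$ using the uniform boundedness of $Op_h(a)$ and the contractive inclusion, and part (2) by splitting off a factor $(h^2\Delta+1)^{-N/2}$ whose trace norm is $O(h^{-\dim M})$ and pairing it with an $O(h^N)$-bounded operator. The only cosmetic difference is that you estimate $\|(h^2\Delta+1)^{-N/2}\|_1$ by Weyl's law and dyadic eigenvalue counting, whereas the paper uses the symbol integral $\iint (h^2\xi^2+1)^{-N/2}\,dx\,d\xi$; both yield the same bound.
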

\begin{proof}
 
1. We set $N=\min(j,-m)$ and obtain
$$
\|Op_h(a)\|_{\mathcal{B}(H^{s}_h(M), H^{s+N}_h(M))}\le \|Op_h(a)\|_{\mathcal{B}(H^{s}_h(M), H^{s-m}_h(M))}\cdot \|Id\|_{\mathcal{B}(H^{s-m}_h(M), H^{s+N}_h(M))}
$$
and the two factors in this formula are obviously uniformly bounded as $h\to 0$. This implies the desired statement.

2.  Let us write $A_h$ as the composition
\begin{equation}\label{comp65}
 A_h=(A_h \Lambda^N_h)\Lambda^{-N}_h, \text{ where }\Lambda^k_h=(h^2\Delta+1)^{k/2}:H^s_h(M)\longrightarrow H^{s-k}_h(M).
\end{equation}
Then the composition $A_h \Lambda^N_h$ is uniformly bounded in $L^2$, while $\Lambda^{-N}_h$ is of order $-N<-\dim M$. Hence, it is of trace class and
therefore the composition $(A_h \Lambda^N_h)\Lambda^{-N}_h$ is of trace class.  Let us now estimate the trace. We have 
\begin{equation}
 |\tr A_h|\le \|A_h\|_1\le  \|A_h \Lambda^N_h\| \|\Lambda^{-N}_h\|_1\le C \tr \Lambda^{-N}_h,
\end{equation}
where $\|B\|$ denotes the norm of operator $B$ and $\|B\|_1$ denotes its trace norm (recall that $\|B\|_1=\tr |B|$) and we use standard properties
of the trace norm (see, e.g. \cite{Shu1}, Proposition D3.7). Thus, it remains to estimate the trace of operator $\Lambda^{-N}_h$.
The principal symbol of this operator is $(h^2\xi^2+1)^{-N/2}$. Therefore, the trace of this operator is estimated by an expression of the form
$$
C\iint \frac{dxd\xi}{(h^2\xi^2+1)^{N/2}}=Ch^{-\dim M}\iint \frac{dxd\xi}{(\xi^2+1)^{N/2}}=O(h^{-\dim M}).
$$
In the last equality we used the fact that $N>\dim M$ by the assumption in our lemma. 

This completes the proof of the lemma.
\end{proof}


\section{Application to Elliptic $G$-operators}\label{sec6}

\paragraph{\bf Elliptic $G$-operators.}

Let $G$ be a finitely generated discrete group, represented on 
$L^2(M)$ by quantized canonical transformations, i.e., there is a map $g\mapsto \Phi_g$,
$g\in G$, which associates to a group element $g$ a quantized canonical transformation 
$\Phi_g$, such that $\Phi_e=I$ and $\Phi_g\Phi_h = \Phi_{gh}$.
In \cite{SSS2} we considered $G$-operators, i.e. bounded operators  
of the form 
\begin{eqnarray}\label{gop1}
D=\sum D_g \Phi_g:L^2(M)\longrightarrow L^2(M),
\end{eqnarray}
where the $D_g$ are pseudodifferential operators of order zero and only finitely many of the 
$D_g$ in the sum are different from zero. Below by a $G$-operator we mean an operator of the form \eqref{gop1} and a choice of coefficients $D_g$.

Egorov's theorem \cite{Ego1} states that for a pseudodifferential operator $A$ with 
principal symbol $\sigma_{pr}(A)$, the operator $\Phi_gA\Phi_g^{-1}$ is again a
pseudodifferential operator with principal symbol $\sigma_{pr}(A)\circ C_g^{-1}$, where 
$C_g$ is the canonical transformation associated with $\Phi_g$.    
It is a consequence of this theorem that the operators of the form $D+K$, where
$D$ is as in \eqref{gop1} and $K\in \mathcal K(L^2(M))$ is compact, form an algebra. 
 
To an operator $D$ as in \eqref{gop1}, more precisely, to this particular representation 
of $D$, we associate a principal symbol, namely the tuple $\{\sigma_{pr}(D_g)\}_{g\in G}$ 
of principal symbols of $D_g$, which can be seen as an element in the maximal $C^*$-crossed product $C(S^*M)\rtimes G$
of the algebra of continuous functions on the cosphere bundle
$S^*M$ of $M$. It turns out that $D$ is a Fredholm operator, if its symbol is invertible in $C(S^*M)\rtimes G$, see \cite[Theorem 1]{SSS2}. 
In general,  the inverse symbol has infinitely many nonzero components and, therefore, it is difficult to write explicity a $G$-operator with this symbol.
In this paper, we shall work in the situation, when the inverse symbol has finitely many components. The general case will be considered elsewhere.
Thus, we introduce the following definition. 

\begin{definition}
A $G$-operator is {\em elliptic}, if its principal symbol
$$
 \sigma_{pr}(D)\in C^\infty(S^*M)\rtimes G 
$$
is invertible in the algebraic crossed product.
\end{definition}
One easily proves that ellipticity implies the Fredholm property in Sobolev  spaces. More precisely,  the following 
 lemma holds. To formulate it, we introduce the algebraic crossed product $\Psi(M)\rtimes G$ of the algebra of classical pseudodifferential operators and $G$
 acting on $\Psi(M)$ by conjugation: $A\in\Psi(M),g\in G\mapsto \Phi_gA\Phi_g^{-1}$. Let $\Psi^m(M)\subset \Psi(M) $ be the space of operators of order $\le m$.
\begin{lemma}
Let $D$ be an elliptic $G$-operator. Then for each $N\ge 1$ there exists a $G$-operator $R \in\Psi^0(M)\rtimes G$ such that  
\begin{equation}\label{eq-ai1}
 1-DR ,1-R D\in \Psi^{-N}(M)\rtimes G.
\end{equation} 
\end{lemma}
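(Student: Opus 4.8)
The plan is to build $R$ by the standard parametrix iteration, starting from a lift of the inverse principal symbol, with Egorov's theorem supplying the algebraic structure of $\Psi(M)\rtimes G$ that makes the iteration work. The only genuinely structural input is the compatibility of the principal symbol map with the crossed-product multiplication; after that the argument is a Neumann series.

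\emph{Step 1: structure of the crossed product.} I would first record that $\Psi(M)\rtimes G$ is a filtered algebra and that the principal symbol map behaves well on it. By Egorov's theorem \cite{Ego1}, for $A\in\Psi^m(M)$ the operator $\Phi_gA\Phi_g^{-1}$ lies in $\Psi^m(M)$ with $\sigma_{pr}(\Phi_gA\Phi_g^{-1})=\sigma_{pr}(A)\circ C_g^{-1}$. Hence conjugation by $\Phi_g$ preserves orders, so the product of an element of $\Psi^{m_1}(M)\rtimes G$ with one of $\Psi^{m_2}(M)\rtimes G$ lies in $\Psi^{m_1+m_2}(M)\rtimes G$; and the map $\sigma_{pr}\colon\Psi^0(M)\rtimes G\to C^\infty(S^*M)\rtimes G$, $\sum A_g\Phi_g\mapsto\sum\sigma_{pr}(A_g)[g]$, is a homomorphism of algebras with kernel exactly $\Psi^{-1}(M)\rtimes G$ — the identity $\sigma_{pr}(\Phi_gA\Phi_g^{-1})=\sigma_{pr}(A)\circ C_g^{-1}$ being precisely what matches the twisted multiplication. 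Since we work with the \emph{algebraic} crossed product, every element in sight is a finite sum, so all operators produced below automatically have finitely many nonzero components.

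\emph{Step 2: lifting the inverse symbol.} As $D$ is elliptic, $\sigma_{pr}(D)$ is invertible in $C^\infty(S^*M)\rtimes G$; write its (two-sided) inverse as $\sum_g s_g[g]$, a finite sum with $s_g\in C^\infty(S^*M)$. Pick $R_g\in\Psi^0(M)$ with $\sigma_{pr}(R_g)=s_g$ and set $R^{(0)}=\sum_g R_g\Phi_g\in\Psi^0(M)\rtimes G$. By the homomorphism property $\sigma_{pr}(DR^{(0)})=\sigma_{pr}(R^{(0)}D)=1$, so $T:=1-DR^{(0)}$ and $S:=1-R^{(0)}D$ both lie in $\Psi^{-1}(M)\rtimes G$.

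\emph{Step 3: Neumann iteration and symmetrization.} Put $R_{\mathrm r}=R^{(0)}(1+T+\cdots+T^{N-1})$; since $T^k\in\Psi^{-k}(M)\rtimes G$ this lies in $\Psi^0(M)\rtimes G$, and $DR_{\mathrm r}=(1-T)(1+\cdots+T^{N-1})=1-T^N$ with $T^N\in\Psi^{-N}(M)\rtimes G$. Symmetrically $R_{\mathrm l}=(1+S+\cdots+S^{N-1})R^{(0)}$ satisfies $R_{\mathrm l}D=1-S^N$ with $S^N\in\Psi^{-N}(M)\rtimes G$. Taking $R:=R_{\mathrm l}$, one has from $R_{\mathrm l}=R_{\mathrm l}(DR_{\mathrm r}+T^N)=(R_{\mathrm l}D)R_{\mathrm r}+R_{\mathrm l}T^N=R_{\mathrm r}-S^NR_{\mathrm r}+R_{\mathrm l}T^N$ that $R_{\mathrm l}-R_{\mathrm r}\in\Psi^{-N}(M)\rtimes G$, hence $1-DR=T^N-D(R_{\mathrm l}-R_{\mathrm r})\in\Psi^{-N}(M)\rtimes G$ and $1-RD=S^N\in\Psi^{-N}(M)\rtimes G$, which is the assertion. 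The one point deserving care — the ``main obstacle'', modest as it is — is Step 1: that the crossed-product order filtration is multiplicative and that $\sigma_{pr}$ is an algebra homomorphism on $\Psi(M)\rtimes G$; both are immediate consequences of Egorov's theorem for the $\Phi_g$, and once they are in hand the remainder is the routine Neumann-series parametrix construction.
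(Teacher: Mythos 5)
Your proposal is correct and follows essentially the same route as the paper: lift the inverse principal symbol to $R_0=\sum_g R_g\Phi_g$ using Egorov's theorem to see that $\sigma_{pr}$ is multiplicative on $\Psi(M)\rtimes G$, then run a finite Neumann series on $1-R_0D$. The only difference is that you explicitly verify the two-sided estimate by comparing the left and right Neumann parametrices, whereas the paper constructs only $R=(1+K_1+\dots+K_1^{N-1})R_0$ and leaves the (standard) right-sided verification implicit.
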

\begin{proof}
Since $D$ is elliptic, there exists an inverse symbol
$$
  \sigma_{pr}(D)^{-1}=\{r_g\}\in C^\infty(S^*M)\rtimes G.
$$
We define the almost inverse   $G$-operator $R_0$ by
$$
 R_0=\sum_g R_g\Phi_g, \quad \text{where } \sigma_{pr}(R_g)=r_g. 
$$
Then $1-R_0D,1-DR_0\in \Psi^{-1}(M)\rtimes G$, and  we set
$$
 R = (1+K_1+\ldots+K_1^{N-1}) R_0,\quad \text{where }K_1=1-R_0 D.
$$
\end{proof}

\begin{remark}
 In general,   inverses modulo smoothing operators   of elliptic $G$-operators can not be 
 represented as finite sums as in \eqref{gop1}. For  instance, consider the operator
 $D=1-\alpha B\Phi$, where $\alpha$ is a number, $B$  is a pseudodifferential operator of negative order and 
 $\Phi$ is an invertible quantized canonical transformation. This elliptic operator   
 is invertible for small $|\alpha| $ and the inverse is equal to the infinite sum
 $$
  D^{-1}=1+\alpha B\Phi+(\alpha B\Phi)^2+\ldots,
 $$
and in general can not be represented as a finite sum.
\end{remark}

\paragraph{\bf Analytic indices localized at conjugacy classes in $G$.}

Given an element $g\in G$, we define a linear functional
\begin{equation}\label{tau1}
 \begin{array}{ccc}
   \Tr_g: \Psi^{-N}(M)\rtimes G & \longrightarrow& \mathbb{C}\vspace{2mm}\\
   \sum\limits_l K_l\Phi_l & \longmapsto & \sum\limits_{l\in\langle g\rangle}\tr\left(K_l\Phi_l\right),
 \end{array}
\end{equation}
where $\langle g\rangle\subset G$ stands for the conjugacy class of $g$, and $\tr$ is the operator trace for operators in $L^2(M)$. 
The traces in \eqref{tau1} are defined whenever $N>\dim M$. 
A direct computation shows that $\Tr_g$ is a trace. Moreover, one has 
$$
 \Tr_g(AB)=\Tr_g(BA),\quad\text{for all } A,B\in \Psi(M)\rtimes G \text{ such that } \ord A+\ord B<-\dim M.
$$

\begin{definition}
Given an elliptic $G$-operator $D$, we define its {\em index localized at the conjugacy class $\langle g\rangle\subset G$} as
\begin{equation}\label{eq-indg1}
 \ind_g D=\Tr_g(1-R  D)-\Tr_g(1-DR )=\Tr_g[D,R]\in \mathbb{C},
\end{equation}
where $R $ is an almost-inverse element as in \eqref{eq-ai1} with $N>\dim M$.
\end{definition}
\begin{proposition}\label{prop6}
The localized index $\ind_g D$ is independent of the choice of the almost-inverse operator and therefore a well-defined   
 invariant of the complete symbol of $D$. It satisfies the following properties:
\begin{enumerate}
 \item[1)]  Consider $D$  as a Fredholm operator
 $$
  D:H^s(M) \longrightarrow H^{s-m}(M)
 $$
 for some $s$. Then its Fredholm index $\ind D$ is given by
 $$
  \ind D=\sum_{\langle g\rangle\subset G}\ind_g D,
 $$  
 where the sum is over all conjugacy classes in  $G$;
 \item[2)] We have  
 \begin{equation}\label{proj0}
  \ind_g D= \Tr_g(W_DP_0W_D^{-1}-P_0),
 \end{equation}
 for the invertible $W_D$ and projection $P_0$ defined as
 \begin{equation}\label{proj1}
   W_D=\left(%
          \begin{array}{cc}
             (2-DR)D  & 1-DR \\
             RD-1 & R \\
          \end{array}%
       \right)
,\qquad  P_0=\left(
           \begin{array}{cc}
             1  & 0 \\
             0  & 0          
           \end{array}
         \right),
 \end{equation}
  where $R$ is an almost-inverse operator such that \eqref{eq-ai1} holds.
   \item[3)] If $D_t$ is a smooth family with parameter $t\in[0,1]$ of elliptic operators and there exists a smooth family of operators $R_t$ such that
   \eqref{eq-ai1} holds, then $\ind_g D_t$ does not depend on $t$. 
\end{enumerate}
\end{proposition}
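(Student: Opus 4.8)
The plan is to dispose of well-definedness first and then treat the three items in turn; the one recurring technical point is the bookkeeping of operator orders, since $\Tr_g$ is defined on $\Psi^{-N}(M)\rtimes G$ only for $N>\dim M$ and the cyclicity $\Tr_g(AB)=\Tr_g(BA)$ is available only when $\ord A+\ord B<-\dim M$. Once this is tracked, every assertion below reduces to an elementary identity in the ring $\Psi(M)\rtimes G$ and its ideal $\Psi^{-N}(M)\rtimes G$. I fix $N>\dim M$ and abbreviate $K_1=1-RD$, $K_2=1-DR$.

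\emph{Well-definedness.} For two almost-inverses $R,R'$ satisfying \eqref{eq-ai1} I would use the identity $R'-R=-(1-R'D)R+R'(1-DR)$, which exhibits $R'-R$ as an element of $\Psi^{-N}(M)\rtimes G$; hence $D(R'-R)$ and $(R'-R)D$ have order $\le -N<-\dim M$, so cyclicity of $\Tr_g$ gives $\Tr_g[D,R'-R]=0$ and therefore $\Tr_g[D,R]=\Tr_g[D,R']$. The same argument with an operator of order $-\infty$ in place of $R'-R$ shows that $\ind_g D$ is unchanged under a smoothing perturbation of $D$, so it is an invariant of the complete symbol.

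\emph{Part 1).} The starting point is the Calder\'on--Fedosov trace formula: since $K_1,K_2$ have order $-N<-\dim M$ they are of trace class (on $H^s$, on $H^{s-m}$ and on $L^2(M)$, the three traces agreeing by Lidskii's theorem), so $\ind D=\tr K_1-\tr K_2$. Writing $K_i$ as finite sums $\sum_l K_{i,l}\Phi_l$ in $\Psi^{-N}(M)\rtimes G$, linearity of $\tr$ together with the partition of $G$ into conjugacy classes gives $\tr K_i=\sum_{\langle g\rangle\subset G}\Tr_g(K_i)$, hence $\ind D=\sum_{\langle g\rangle}(\Tr_g K_1-\Tr_g K_2)=\sum_{\langle g\rangle}\ind_g D$.

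\emph{Part 2).} I would first verify that $W_D$ is invertible by exhibiting $W_D^{-1}=\left(\begin{smallmatrix} R & RD-1\\ 1-DR & (2-DR)D\end{smallmatrix}\right)$ — the products $W_DW_D^{-1}$ and $W_D^{-1}W_D$ reduce to $1$ by direct multiplication in $\Mat_2(\Psi(M)\rtimes G)$. Then $e:=W_DP_0W_D^{-1}$ is an idempotent, and multiplying out shows that the diagonal of $e$ is $(1-K_2^2,\,K_1^2)$ while its off-diagonal entries lie in $\Psi^{-N}(M)\rtimes G$; thus $e-P_0\in\Mat_2(\Psi^{-N}(M)\rtimes G)$ and $\Tr_g(e-P_0)=\Tr_g(K_1^2)-\Tr_g(K_2^2)$. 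To identify this with $\ind_g D=\Tr_g K_1-\Tr_g K_2$ I would use $DK_1=K_2D$ (equivalently $K_1R=RK_2$), which gives $K_1^2=K_1-RK_2D$; cyclicity then yields $\Tr_g(RK_2D)=\Tr_g(K_2DR)=\Tr_g K_2-\Tr_g K_2^2$, so $\Tr_g K_1^2-\Tr_g K_2^2=\Tr_g K_1-\Tr_g K_2=\ind_g D$, which is \eqref{proj0}.

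\emph{Part 3).} Using 2), I would write $\ind_g D_t=\Tr_g(e_t-P_0)$ with $e_t=W_{D_t}P_0W_{D_t}^{-1}$. Since $W_{D_t}$ and $W_{D_t}^{-1}$ are polynomial in $D_t,R_t$, the family $e_t$ is smooth, $e_t^2=e_t$, and the computation above shows $e_t-P_0$ is a smooth family in $\Mat_2(\Psi^{-N}(M)\rtimes G)$, so $\dot e_t$ lies there too and $\frac{d}{dt}\Tr_g(e_t-P_0)=\Tr_g(\dot e_t)$. From $e_t^2=e_t$ one gets $e_t(1-e_t)=(1-e_t)e_t=0$, hence $\dot e_t=e_t\dot e_t(1-e_t)+(1-e_t)\dot e_t e_t$, and applying cyclicity of $\Tr_g$ (legitimate since $e_t\dot e_t$ and $\dot e_t e_t$ have order $\le -N$) each summand has vanishing $\Tr_g$; thus $\frac{d}{dt}\ind_g D_t=0$. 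A direct alternative is to differentiate $\Tr_g((1-R_tD_t)^m)-\Tr_g((1-D_tR_t)^m)$ for a fixed $m\ge 2$, using $D_t(1-R_tD_t)=(1-D_tR_t)D_t$, $(1-R_tD_t)R_t=R_t(1-D_tR_t)$ and cyclicity; the same identities show this quantity equals $\ind_g D_t$ for every $m$. I do not expect a genuine obstacle here: the only external ingredient is the Calder\'on--Fedosov formula used in 1), and the only thing requiring consistent care is the order bookkeeping, so that every appeal to cyclicity of $\Tr_g$ and to trace-class properties is justified.
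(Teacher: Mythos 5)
Your proof is correct and follows essentially the same route as the paper's (much terser) one: well-definedness via cyclicity of $\Tr_g$, the Calder\'on--Fedosov trace formula for 1), the explicit inverse $W_D^{-1}$ together with the computation $W_DP_0W_D^{-1}-P_0=\operatorname{diag}(-K_2^2,\,K_1^2)+(\text{off-diagonal terms of order }\le -N)$ for 2), and differentiation of the smooth family of projections for 3). The only cosmetic difference is in 2), where the paper identifies $K_1^2-K_2^2$ with $[D,\,2R-RDR]$ and invokes independence of the choice of almost-inverse, whereas you close the computation directly via $DK_1=K_2D$ and cyclicity --- both are valid.
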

\begin{proof}
It is standard to prove that $\ind_g D$ is well defined, that is, it does not depend on the choice of $R$,  using the property that $\Tr_g$ is a trace.
 
Property 1) is obvious.

Property 2) follows by a direct computation. Indeed,  $W_D$ is invertible and the inverse is equal to
$$
W_D^{-1}=\left(%
          \begin{array}{cc}
             R & RD-1 \\
             1-DR & D(2-RD) \\
          \end{array}%
       \right).
$$
Then we calculate the right-hand side in \eqref{proj0} and obtain
$$
\Tr_g(W_DP_0W_D^{-1}-P_0)= \Tr_g((2-DR)DR+(RD-1)^2-1 )=
\Tr_g  [D,2R-RDR])=\ind_g D.
$$
Here we used the fact that $2R-RDR$ is an  almost-inverse for  $D$ and that $\ind_gD$ does not depend on the choice of  the almost-inverse operator. 

Let us now prove 3). We first note that the invertible element $W_{D_t}$ defined in \eqref{proj1} and the projection
$W_{D_t}P_0W_{D_t}^{-1}$ are also smooth in $t$. Hence, the trace $\Tr_g(W_{D_t}P_0W_{D_t}^{-1}-P_0)$ does not depend on $t$.
Thus, by 2) $\ind_g D_t$ also does not depend on $t$.
\end{proof}
\begin{remark}
The expressions in \eqref{proj0} and \eqref{proj1}  are just an explicit form of the boundary mapping in algebraic $K$-theory, see e.g.~\cite{Bla1,Nis3}). Note however, that we can not use the graph projection as in \cite{NeTs3}, since the algebraic crossed products we use are not spectrally invariant. 
\end{remark}

\paragraph{\bf Action of $G$ on semiclassical symbols.}

Now, given $g\in G$, we have a quantized canonical transformation $\Phi_g$. We denote the corresponding semiclassical
quantized canonical transformation as $\Phi_{g,h,\varepsilon,N}$.

Let us define the action of $g\in G$ on semiclassical symbols  $a\in \mathbb{A}'/ \mathbb{A}'_N$ by the formula 
\begin{equation}\label{eq-action1}
\varphi_{g,N}(a)=\sigma(\Phi_{g,h,\varepsilon,N} Op_h(a) \Phi_{g^{-1},h,\varepsilon,N})\in \mathbb{A}'/ \mathbb{A}'_N,
\end{equation}
where $\varepsilon=\varepsilon(a)$ is chosen as in Theorem~\ref{Egorov}. This element is well defined, since conjugation with
$\Phi_{g,h,\varepsilon,N}$ preserves the filtration of $\mathbb{A}$ (this follows from Eq.~\eqref{eq-symb4}).

Clearly, the element \eqref{eq-action1} will not change if we take a larger $N$ or a smaller $\varepsilon$.
Moreover, this element is independent of the choice of cut-off functions. Therefore, below we omit $\varepsilon$ for brevity. 
 
\begin{proposition}
\label{lem5}
\begin{enumerate}
\item The   mapping   
$$
 \begin{array}{ccc}
  \varphi_{g,N}:\mathbb{A}'/\mathbb{A}'_N & \longrightarrow & \mathbb{A}'/\mathbb{A}'_N\\
  a & \longmapsto  & \varphi_{g,N}(a)
\end{array} 
$$  
is an automorphism of the algebra $\mathbb{A}'/\mathbb{A}'_N$ and the collection of all such mappings for $g\in G$ defines
an action of $G$ on   $\mathbb{A}'/\mathbb{A}'_N$.
\item The actions $\varphi_{N}$ for different $N$ are compatible, i.e., the following diagram commutes:
\begin{equation}
 \xymatrix{
  \mathbb{A}'/\mathbb{A}'_{N+1} \ar[r]^{\varphi_{g,N+1}} \ar[d] & \mathbb{A}'/\mathbb{A}'_{N+1} \ar[d]\\
  \mathbb{A}'/\mathbb{A}'_N \ar[r]^{\varphi_{g,N}}        & \mathbb{A}'/\mathbb{A}'_{N}.  
 }
\end{equation}
\end{enumerate} 
\end{proposition}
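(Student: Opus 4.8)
The plan is to deduce both parts from Egorov's theorem (Theorem~\ref{Egorov}) and the composition formula (Proposition~\ref{fio2}), which reduce the statement to formal bookkeeping modulo $O(h^N)$-families. Two preliminary observations underlie everything. First, by Theorem~\ref{Egorov} and the isomorphism \eqref{eq-iso1}, $\Phi_{g,h,N}Op_h(a)\Phi_{g^{-1},h,N}=Op_h(\varphi_{g,N}(a))$ modulo $O(h^N)$-families, with $\varphi_{g,N}(a)=\bigl[1+\sum h^k\mu_{k,\alpha,\beta}D^\alpha_xD^\beta_\xi\bigr](C_g^{-1})^*a\in\mathbb{A}'/\mathbb{A}'_N$ as in \eqref{eq-symb4}; here the bracket operator differs from the identity only by filtration-raising terms, hence is invertible on the finitely filtered algebra $\mathbb{A}'/\mathbb{A}'_N$, and $(C_g^{-1})^*$ is invertible, so $\varphi_{g,N}$ is a bijection already for this reason. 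Second, a semiclassical deformation of $\Phi_e=I$ — built from the phase $(x-x')\cdot\theta$ and amplitude $(2\pi)^{-n}$ in \eqref{ker2} — is $Op_h$ of a symbol equal to $1$ off a neighbourhood of the zero section, so that for $\varepsilon$ small with respect to $a$ one has $\Phi_{e,h,N}Op_h(a)=Op_h(a)=Op_h(a)\,\Phi_{e,h,N}$ modulo $O(h^\infty)$-families; this I would check by splitting the excision function as $1-(1-\chi)$, observing that the first term reproduces $Op_h(a)$ exactly while the second has a phase without stationary points on the support of its (compactly $\theta$-supported) amplitude and is controlled by the integration-by-parts estimates behind Lemmas~\ref{mapping} and~\ref{negligible}.

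Granting these, part (1) is short, all identities read modulo $O(h^N)$-families with $\varepsilon>0$ taken small enough for the finitely many symbols and canonical transformations appearing in each line (admissible since $\varphi_{g,N}(a)$ is unchanged when $\varepsilon$ is decreased). For the homomorphism property I would write
$$Op_h(\varphi_{g,N}(a))\,Op_h(\varphi_{g,N}(b))=\Phi_{g,h,N}Op_h(a)\bigl(\Phi_{g^{-1},h,N}\Phi_{g,h,N}\bigr)Op_h(b)\,\Phi_{g^{-1},h,N},$$
apply Proposition~\ref{fio2} to $\Phi_{g^{-1}}\Phi_g=\Phi_e$ to turn $\Phi_{g^{-1},h,N}\Phi_{g,h,N}Op_h(b)$ into $\Phi_{e,h,N}Op_h(b)$ and then, by the second observation, into $Op_h(b)$, and use $Op_h(a)Op_h(b)=Op_h(a*b)$ together with Egorov once more; this yields $\varphi_{g,N}(a*b)=\varphi_{g,N}(a)*\varphi_{g,N}(b)$, so $\varphi_{g,N}$ is an algebra automorphism. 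The relation $\varphi_{e,N}=\mathrm{id}$ is immediate from the second observation (take the same deformation for $\Phi_{e,h,N}$ and its inverse), and the group law $\varphi_{g_1,N}\circ\varphi_{g_2,N}=\varphi_{g_1g_2,N}$ follows by the analogous computation: after inserting $Op_h(\varphi_{g_2,N}(a))=\Phi_{g_2,h,N}Op_h(a)\Phi_{g_2^{-1},h,N}$, combine $\Phi_{g_1,h,N}\Phi_{g_2,h,N}$ into $\Phi_{g_1g_2,h,N}$ by Proposition~\ref{fio2} with $Op_h$ acting on the right, and $\Phi_{g_2^{-1},h,N}\Phi_{g_1^{-1},h,N}$ into $\Phi_{(g_1g_2)^{-1},h,N}$ by the version with $Op_h$ on the left; together with $\varphi_{e,N}=\mathrm{id}$ this shows that $g\mapsto\varphi_{g,N}$ is an action of $G$.

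For part (2), the vertical arrows are the canonical surjections $\mathbb{A}'/\mathbb{A}'_{N+1}\to\mathbb{A}'/\mathbb{A}'_N$, and commutativity of the square is exactly the uniformity of the expansion \eqref{eq-symb4} in $N$: Theorem~\ref{Egorov} states that the coefficients $\mu_{k,\alpha,\beta}$ are intrinsic, expressed through the phases and amplitudes of $\Phi_g$ and $\Phi_{g^{-1}}$ and independent of $N$, so the symbol representing $\varphi_{g,N+1}(a)$ is that of $\varphi_{g,N}(a)$ with the $k=N$ terms adjoined. Since $(C_g^{-1})^*$ and the operators $h^k\mu_{k,\alpha,\beta}D^\alpha_xD^\beta_\xi$ commute with reduction modulo $\mathbb{A}'_N$, projecting the $(N+1)$-expansion down to $\mathbb{A}'/\mathbb{A}'_N$ reproduces $\varphi_{g,N}$ applied to the projected symbol, which is the desired commutativity.

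I do not expect a serious obstacle, since the two hard ingredients — Egorov's theorem and the composition formula — are already available; the one genuinely nontrivial preparatory point is the verification that a semiclassical deformation of $\Phi_e=I$ acts trivially on $Op_h(\mathbb{A}')$, which, although morally just the composition formula applied to $\Phi_g\Phi_{g^{-1}}=I$, rests in the end on an explicit oscillatory-integral estimate in the spirit of Lemmas~\ref{mapping} and~\ref{negligible}. The remaining work is purely organizational: keeping one small $\varepsilon$ throughout each identity and using that products of $O(h^N)$-families with the $O(1)$-families $\Phi_{g,h,N}$ and $Op_h(a)$ remain $O(h^N)$, which is Lemma~\ref{mapping}.
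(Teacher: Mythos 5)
Your proposal is correct and follows essentially the same route as the paper: both parts are reduced to Egorov's theorem and the composition formula of Proposition~\ref{fio2}, with all identities read modulo $O(h^N)$-families for a sufficiently small common $\varepsilon$. The only differences are embellishments rather than a new approach --- you make explicit the fact that a semiclassical deformation of $\Phi_e=I$ acts trivially on $Op_h(\mathbb{A}')$ (which the paper uses implicitly when it inserts $\Phi_{g^{-1},h,N}\Phi_{g,h,N}$ via Proposition~\ref{fio2}), and you give a direct bijectivity argument from the nilpotence of the filtration-raising part of \eqref{eq-symb4}, whereas the paper obtains invertibility of $\varphi_{g,N}$ from the group-action identity.
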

\begin{proof}

1. Let us first prove that $\varphi_{g,N}$ is an automorphism. Indeed, given $a,a'\in\mathbb{A}'$, we have
equalities modulo $O(h^N)$-families
\begin{multline}
 Op_h(\varphi_{g,N}(a_1*a_2))=\Phi_{g,h,N}Op_h(a_1*a_2)\Phi_{g^{-1},h,N}\\
 =\Phi_{g,h,N}Op_h(a_1)Op_h(a_2)\Phi_{g^{-1},h,N}\\
 =\Phi_{g,h,N}Op_h(a_1)\Phi_{g^{-1},h,N} \Phi_{g,h,N} Op_h(a_2)\Phi_{g^{-1},h,N}\\
 =Op_h(\varphi_{g,N}(a_1 ))Op_h(\varphi_{g,N}(a_2))=Op_h(\varphi_{g,N}(a_1 )* \varphi_{g,N}(a_2)).
\end{multline}
Here the first equality is true by the definition of $\varphi_{g,N}$, the second and the last equalities are true, since 
$Op_h$ is a homomorphism, the third equation follows from Proposition~\ref{fio2}; the fourth equality is again just the definition of $\varphi_{g,N}$. 

2. Let us now prove that the collection $\{\varphi_{g,N}\}_{g\in G}$ defines  a group action. 
Given $g_1,g_2\in G$,  we have to show that
\begin{equation}\label{eq-12}
 \varphi_{g_1,N}(\varphi_{g_2,N}(a))=\varphi_{g_1g_2,N}(a).
\end{equation}
Indeed, the left hand side of this equality is the symbol of the composition
\begin{equation}\label{eq-12a}
 \Phi_{g_1,h,N}  \Phi_{g_2,h,N} Op_h(a) \Phi_{g^{-1}_2,h,N} \Phi_{g^{-1}_1,h,N}. 
\end{equation}
However, by Proposition~\ref{fio2} we have  
$$
 \Phi_{g_1,h,N}  \Phi_{g_2,h,N} Op_h(a)=\Phi_{g_1g_2,h,N}   Op_h(a)\quad \mod O(h^N)\text{-families}
$$
and also
$$
Op_h(a) \Phi_{g^{-1}_2,h,N} \Phi_{g^{-1}_1,h,N}=Op_h(a) \Phi_{g^{-1}_2g^{-1}_1,h,N}\quad \mod O(h^N)\text{-families}. 
$$
Hence, the left hand side in \eqref{eq-12a} is equal to 
$$
 \Phi_{g_1g_2,h,N}   Op_h(a)\Phi_{g^{-1}_2g^{-1}_1,h,N} \quad \mod O(h^N)\text{-families},
$$
which is just the right hand side in \eqref{eq-12}. Hence, we showed that 
$\{\varphi_{g,N}\}_{g\in G}$ is a representation of $G$ by automorphisms of $\mathbb{A}'/\mathbb{A}'_{N}$.

3. Compatibility of representations $\varphi_{g,N}$ for different $N$ follows from the 
fact that the element \eqref{eq-action1}  does not change, if we take larger $N$ and smaller $\varepsilon$ on the right hand side.  
\end{proof}

\begin{corollary}
The formula  
\begin{equation}\label{eq-op5}
g\in G, a\in\mathbb{A}'\quad \longmapsto \quad  \varphi_g(a)=\Bigl[1+\sum_{k\ge 1, 0<|\alpha|+|\beta|\le 2k}h^k \mu_{g,k,\alpha,\beta} D^\alpha_x D^\beta_\xi \Bigr]{C^{-1}_g}^*a,
\end{equation}
where the coefficients $\mu_{g,k,\alpha,\beta}(x,\xi)$ were defined in \eqref{eq-symb4}, defines an action of $G$ on  $\mathbb{A}'$.
\end{corollary}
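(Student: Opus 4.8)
The plan is to verify that the right-hand side of \eqref{eq-op5} defines a map $\varphi_g\colon\mathbb A'\to\mathbb A'$ which descends, for every $N$, to the automorphism $\varphi_{g,N}$ of $\mathbb A'/\mathbb A'_N$ from Proposition~\ref{lem5}, and then to bootstrap the group-action axioms from their already established counterparts on the quotients, using that the filtration is separated: $\bigcap_{N\ge 1}\mathbb A'_N=\{0\}$, since a semiclassical symbol all of whose components vanish is identified with $0$.

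First I would show that $\varphi_g$ maps $\mathbb A'$ into itself. Because $C_g$ is a homogeneous canonical transformation of $T^*_0M$, the pullback $(C_g^{-1})^*$ is well defined and smooth on $T^*_0M$; by the $1$-homogeneity of $C_g$ in the fibre and compactness of $S^*M$, the fibre norm of $C_g^{-1}(x,\xi)$ is comparable to $|\xi|$, so $(C_g^{-1})^*$ carries a symbol vanishing near the zero section to one vanishing near the zero section and preserves symbol orders. Writing $a\sim\sum_j h^ja_j\in\mathbb A'$, the $h^m$-component of $\varphi_g(a)$ is the finite sum
\begin{equation*}
\varphi_g(a)_m=(C_g^{-1})^*a_m+\sum_{k=1}^{m}\ \sum_{0<|\alpha|+|\beta|\le 2k}\mu_{g,k,\alpha,\beta}\,D^\alpha_x D^\beta_\xi\,(C_g^{-1})^*a_{m-k}.
\end{equation*}
Since $D^\beta_\xi$ lowers the symbol order by $|\beta|$, $D^\alpha_x$ preserves it, and $\mu_{g,k,\alpha,\beta}$ is homogeneous of degree $|\beta|-k$ in $\xi$ (Theorem~\ref{Egorov}), the order bookkeeping shows that $\varphi_g$ preserves symbol orders, so $\varphi_g(a)$ is again a semiclassical symbol of order $\le 0$; and every summand vanishes near the zero section because $a$ does, so the possible singularity of $\mu_{g,k,\alpha,\beta}$ at $\xi=0$ is harmless. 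Hence $\varphi_g(a)\in\mathbb A'$, and $\varphi_g$ is canonically defined because by Theorem~\ref{Egorov} the coefficients $\mu_{g,k,\alpha,\beta}$ do not depend on the auxiliary choices.

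Next I would record that $\varphi_g$ preserves each $\mathbb A'_N$ — immediate from the displayed formula for $\varphi_g(a)_m$ — so it induces a map on $\mathbb A'/\mathbb A'_N$, and by \eqref{eq-symb4} this induced map is exactly $\varphi_{g,N}$, i.e. the projection $\mathbb A'\to\mathbb A'/\mathbb A'_N$ intertwines $\varphi_g$ and $\varphi_{g,N}$ for every $N$. Now Proposition~\ref{lem5} tells us that each $\varphi_{g,N}$ is an algebra automorphism, that $\varphi_{g_1,N}\circ\varphi_{g_2,N}=\varphi_{g_1g_2,N}$, and that $\varphi_{e,N}=\mathrm{id}$. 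Therefore, for all $a,b\in\mathbb A'$ and $g_1,g_2\in G$, each of the three elements
\begin{equation*}
\varphi_g(a*b)-\varphi_g(a)*\varphi_g(b),\qquad \varphi_{g_1}(\varphi_{g_2}(a))-\varphi_{g_1g_2}(a),\qquad \varphi_e(a)-a
\end{equation*}
maps to $0$ in $\mathbb A'/\mathbb A'_N$ for every $N$, hence equals $0$ in $\mathbb A'$ by separatedness. In particular $\varphi_g$ is an algebra automorphism of $\mathbb A'$ with inverse $\varphi_{g^{-1}}$, and $g\mapsto\varphi_g$ is a homomorphism $G\to\aut(\mathbb A')$, i.e. an action of $G$ on $\mathbb A'$.

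The only genuinely computational ingredient is the order bookkeeping in the second step, which guarantees that the infinite series in \eqref{eq-op5} is summable termwise in $h$ and returns an element of $\mathbb A'$; together with the separatedness of the filtration this is all that is needed, after which the group-action axioms follow formally from Egorov's theorem and Proposition~\ref{lem5}, so I do not anticipate a serious obstacle here.
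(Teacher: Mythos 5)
Your proposal is correct and follows essentially the route the paper intends: the corollary is stated without proof precisely because, as you observe, the formula \eqref{eq-op5} is the termwise-in-$h$ assembly of the compatible automorphisms $\varphi_{g,N}$ from Proposition~\ref{lem5}, and the action axioms descend from the quotients $\mathbb{A}'/\mathbb{A}'_N$ via the separatedness of the filtration. Your order bookkeeping (each $h^m$-component lands in $S^{-m}$ and vanishes near the zero section) is the only content the paper leaves implicit, and you have it right.
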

Hence, we can define the  algebraic crossed product $\mathbb{A}' \rtimes G$. Its elements are called
{\em semiclassical $G$-symbols}. We denote the product of two semiclassical $G$-symbols by ``*''.

\paragraph{\bf Ellipticity and parametrix construction.}

$G$ acts on $T^*_0M$ by homogeneous canonical transformations. Thus, we have crossed products
$S^j(T^*_0M)\rtimes G$ of spaces of symbols of order $j$, which vanish in a neighborhood of the zero section. 

\begin{definition}
  A symbol $a\in \mathbb{A}'\rtimes G$ of order zero is {\em elliptic} if its leading symbol $a_0\in S^0(T^*_0M )\rtimes G$ is   invertible modulo symbols of order $-1$, i.e., there exists a classical
  leading symbol $r_0\in S^{0}(T^*_0M)\rtimes G$ such that the differences
 \begin{equation}\label{eq-par5}
 a_0r_0-1,\quad  r_0a_0-1 \quad \text{ are of order }\le -1.
\end{equation} 
\end{definition} 

Denote by $\mathbb{B}$ the algebra $\mathbb{A}'\rtimes G$ with adjoint unit. Its elements can be represented
as collections 
\begin{equation}\label{coll1}
    \left\{\sum_{j\ge 0}h^j a_{l,j}(x,\xi)\right\}_{l\in G},
\end{equation}
where the coefficients $a_{l,j}(x,\xi)$ satisfy all the properties for elements in $\mathbb{A}'$, except that $a_{e,0}(x,\xi)$ is allowed 
to be equal to a nonzero constant in a neighborhood of the zero section in $T^*M$. Denote by $\mathbb{B}_{N}\subset \mathbb{B}$
the ideal of elements \eqref{coll1}, whose coefficients $a_{l,j}(x,\xi)$ have order 
$\le -N$. 

\begin{lemma}
Let   $a\in \mathbb{B}$ be an elliptic symbol of order zero. Then for each $N\ge 1$ there exists a symbol $r_N \in \mathbb{B}$ such that  
\begin{equation}\label{eq-ai2}
 1-a*r_N,1-r_N*a  \in \mathbb{B}_{N}.
\end{equation} 
\end{lemma}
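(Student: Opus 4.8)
The plan is to carry out the standard formal–parametrix construction by geometric (Neumann) series inside the filtered ring $\mathbb B$. The first task is to record that $\{\mathbb B_N\}_{N\ge 0}$ is a decreasing chain of two-sided ideals with $\mathbb B_0=\mathbb B$ and
\begin{equation}
 \mathbb B_N*\mathbb B_M\subseteq\mathbb B_{N+M}\qquad(N,M\ge 0).
\end{equation}
On the algebra $\mathbb A'$ this is immediate from the star product \eqref{eq-star1}, which lowers the order of the coefficients at least by the number of $h$'s produced. On the crossed product one must in addition check that each automorphism $\varphi_g$ of \eqref{eq-op5} does not lower the order of the symbol coefficients: this is exactly where Egorov's theorem (Theorem \ref{Egorov}) enters, through the fact that $\mu_{g,k,\alpha,\beta}(x,\xi)$ is homogeneous in $\xi$ of degree $|\beta|-k$, so that the monomial $h^k\mu_{g,k,\alpha,\beta}D^\alpha_xD^\beta_\xi$ drops the order by exactly $k$ while carrying the factor $h^k$, just like $(C^{-1}_g)^*$ alone on the leading part. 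Together with the finiteness of the sums over $G$ this yields the displayed inclusion; in particular $\mathbb B*\mathbb B_N=\mathbb B_N=\mathbb B_N*\mathbb B$ and $\mathbb B_1^{*N}:=\mathbb B_1*\cdots*\mathbb B_1\subseteq\mathbb B_N$.

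Next I would produce a zeroth approximation $r_0\in\mathbb B$ with $1-a*r_0,\ 1-r_0*a\in\mathbb B_1$. By the definition of ellipticity there is a leading symbol $r_0\in S^0(T^*_0M)\rtimes G$, a finite sum over $G$, with $a_0r_0-1$ and $r_0a_0-1$ of order $\le-1$, see \eqref{eq-par5}. Viewing $r_0$ as an element of $\mathbb B$ (choosing its representative near the zero section appropriately; that region only affects the $\mathbb B_1$-error and is absorbed by the adjoined unit), one finds from \eqref{eq-star1} that the $h^0$-coefficient of $a*r_0$ equals the product $a_0r_0$ in $S^0(T^*_0M)\rtimes G$, hence the $h^0$-coefficient of $1-a*r_0$ is $1-a_0r_0$, of order $\le-1$; and for $j\ge 1$ the $h^j$-coefficient of $a*r_0$ is automatically of order $\le-1$ because $a\in\mathbb B$ has $h^j$-coefficient of order $\le-j$ and every star-product monomial contributing at order $h^j$ drops the order accordingly. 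Thus $1-a*r_0\in\mathbb B_1$, and symmetrically $1-r_0*a\in\mathbb B_1$.

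Finally I would iterate. Set $b=1-a*r_0\in\mathbb B_1$, $c=1-r_0*a\in\mathbb B_1$, and define one-sided parametrices
\begin{equation}
 r^{R}_{N}=r_0*\sum_{k=0}^{N-1}b^{*k},\qquad r^{L}_{N}=\Bigl(\sum_{k=0}^{N-1}c^{*k}\Bigr)*r_0 .
\end{equation}
A telescoping computation gives $a*r^{R}_{N}=(1-b)*\sum_{k=0}^{N-1}b^{*k}=1-b^{*N}$ and likewise $r^{L}_{N}*a=1-c^{*N}$, and $b^{*N},c^{*N}\in\mathbb B_1^{*N}\subseteq\mathbb B_N$, so $1-a*r^{R}_{N}\in\mathbb B_N$ and $1-r^{L}_{N}*a\in\mathbb B_N$. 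Moreover the two parametrices agree modulo $\mathbb B_N$, since
\begin{equation}
 r^{L}_{N}-r^{R}_{N}=r^{L}_{N}*(1-a*r^{R}_{N})-(1-r^{L}_{N}*a)*r^{R}_{N}\in\mathbb B*\mathbb B_N+\mathbb B_N*\mathbb B\subseteq\mathbb B_N .
\end{equation}
Hence $r_N:=r^{R}_{N}$ does the job: $1-a*r_N\in\mathbb B_N$ directly, while $1-r_N*a=(1-r^{L}_{N}*a)+(r^{L}_{N}-r^{R}_{N})*a\in\mathbb B_N$, which is \eqref{eq-ai2}.

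The step I expect to be the main obstacle is the first one, namely the verification that the order filtration of the semiclassical symbol coefficients is multiplicative on the crossed product $\mathbb B=\mathbb A'\rtimes G$; this is precisely the point at which the detailed structure of the $G$-action supplied by the semiclassical Egorov theorem — in particular the homogeneity degrees of the coefficients $\mu_{g,k,\alpha,\beta}$ in \eqref{eq-symb4} — is indispensable. Once that is established, the remainder is the purely formal geometric-series parametrix construction in a filtered ring, with only the minor bookkeeping point about the behaviour near the zero section.
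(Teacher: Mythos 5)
Your proposal is correct and follows essentially the same route as the paper: choose $r_0$ from the ellipticity condition, observe $w=1-a*r_0\in\mathbb B_1$, and invert $1-w$ by a finite geometric series in the filtered ring. You merely make explicit two points the paper leaves implicit — the multiplicativity $\mathbb B_N*\mathbb B_M\subseteq\mathbb B_{N+M}$ on the crossed product (via the homogeneity of the $\mu_{g,k,\alpha,\beta}$) and the standard left/right parametrix comparison replacing the paper's ``a computation shows also that $r*a-1\in\mathbb B_{N+1}$'' — both of which are welcome additions.
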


\begin{proof}
Since $a$ is elliptic, there exists $r_0\in S^{0}(T^*_0M)\rtimes G$ such that \eqref{eq-par5} holds.
Clearly, $r_0\in \mathbb{B}$ and we have
\begin{equation*}
 a*r_0=\left(a_0+\sum_{1\le j\le N}h^ja_j\right)*r_0=a_0r_0+\sum_{1\le j\le N}h^jb_j \equiv 1-w, 
\end{equation*}
where  
$$
 w=(1-a_0r_0)-\sum_{1\le j\le N}h^jb_j \in \mathbb{B}_{1}.
$$
Hence, we obtain
$$
a*r_0*(1+w+w*w+...+w^N)=(1-w)*(1+w+w*w+...+w^N)=1-w^{N+1},
$$
where $w^{N+1}\in \mathbb{B}_{N+1}.$
Therefore, if we set
$ 
r=r_0*(1+w+w*w+...+w^N),
$ 
then we have 
$$
 a*r-1=-w^{N+1}\in \mathbb{B}_{N+1}.
$$ 
A computation shows also that $r*a-1\in \mathbb{B}_{N+1}$. Hence,  \eqref{eq-ai2} is valid. 
\end{proof}

\paragraph{\bf Localized traces of  $G$-operators.}

Given a finite order element $g\in G$ and a number $N>2\dim M$, we  now   define a linear functional on elements
$$
 a=\{a_l\}_{l\in G}\in \mathbb{A}' \rtimes G\qquad \text{such that } \ord a<-2\dim M.
$$
The desired functional, denoted by $\tau_{g,N}$, is defined as
\begin{equation}\label{eq-6}
   \tau_{g,N}(a)= \sum\limits_{l\in\langle g\rangle}  \tr(Op_h(a_{l})\Phi_{l,h,\varepsilon,N})\in 
   \left(h^{-\dim T^*M^g/2} \mathbb{C}[h]\right)/h^{N-\dim M}, 
\end{equation}
where $\left(h^{-\dim T^*M^g/2} \mathbb{C}[h]\right)/h^{N-\dim M}$ stands for the space of  Laurent
polynomials 
$$
\sum_{-\dim T^*M^g/2\le j<N-\dim M}c_j h^j
$$
and $\varepsilon$ in  $\Phi_{l,h,\varepsilon,N}$  is chosen such  that the first $N$ components in the expansion of $a_{l}\in \mathbb{A}'$
in powers of $h$ are equal to zero on the set 
$C_l\{|\xi|<2\varepsilon\}\subset T^*M$.

We claim that  $\tau_{g,N}(a)$ in \eqref{eq-6} is independent of the choice of $\varepsilon$ and $\Phi_{l,h,\varepsilon,N}$. 
Indeed, a different choice of these data  gives an operator family, which differs from the original family by an $O(h^N)$-family according to Proposition~\ref{fio1}. 
As the trace of such a family is $O(h^{N-\dim M})$ by Lemma~\ref{lemma-trace}, it is equal to zero in the quotient \eqref{eq-6}. 
Moreover, the traces for different $N$ are compatible:  
$$
 \tau_{g,N+1}(a)\equiv \tau_{g,N}(a)\mod h^{N-\dim M}. 
$$ 
Hence, in the limit $N\to\infty$ these functionals assemble in a functional denoted by
\begin{equation}\label{eq-6q}
  \tau_g: \mathbb{A}'\rtimes G \longrightarrow  h^{-\dim T^*M^g/2}\mathbb{C}[[h]]
\end{equation}
defined on symbols of  order $<-2\dim M$. Here $\mathbb{C}[[h]]$ stands for the algebra of formal power series in $h$.

\begin{proposition}\label{pro77}
The functional \eqref{eq-6q} is   a trace:   
Given $a,b\in \mathbb{A}' \rtimes G$ such that $\ord a+\ord b<-2\dim M$, we have
$$
\tau_{g}(a*b)=\tau_{g}(b*a).
$$
\end{proposition}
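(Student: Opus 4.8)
The plan is to reduce the identity $\tau_g(a*b)=\tau_g(b*a)$, term by term in the group variable, to cyclicity of the operator trace, using Egorov's theorem (Theorem~\ref{Egorov}) and the composition formula (Proposition~\ref{fio2}) to rewrite the operators that occur. Since $\tau_g$ is the inductive limit of the compatible functionals $\tau_{g,N}$ of \eqref{eq-6}, it is enough to prove $\tau_{g,N}(a*b)\equiv\tau_{g,N}(b*a)$ modulo $h^{N-\dim M}$ for every $N>2\dim M$. First I would write $a=\{a_l\}$, $b=\{b_m\}$ with only finitely many nonzero components, so that in $\mathbb A'\rtimes G$, equipped with the action $\varphi_l$ of \eqref{eq-op5}, one has $(a*b)_k=\sum_{lm=k}a_l*\varphi_l(b_m)$ and $(b*a)_k=\sum_{ml=k}b_m*\varphi_m(a_l)$. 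Because only finitely many pairs $(l,m)$ occur and each $a_l,b_m$ lies in $\mathbb A'$, hence vanishes (in its first $N$ expansion terms) near the zero section, I would fix one $\varepsilon>0$ small enough to serve in the definition \eqref{eq-6} of both $\tau_{g,N}(a*b)$ and $\tau_{g,N}(b*a)$ and in every application of Proposition~\ref{fio2} below; recall that $\tau_{g,N}$ is independent of this choice once $\varepsilon$ is small enough.

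Next I would fix a pair $(l,m)$ and track its contribution. When $lm\in\langle g\rangle$, its contribution to $\tau_{g,N}(a*b)$ is $\tr\bigl(Op_h(a_l*\varphi_l(b_m))\Phi_{lm,h,N}\bigr)$. Using that $Op_h$ is a homomorphism (Eq.~\eqref{eq-iso1}) and that, by Theorem~\ref{Egorov} together with the definition \eqref{eq-action1} of the action, $Op_h(\varphi_l(b_m))\equiv\Phi_{l,h,N}Op_h(b_m)\Phi_{l^{-1},h,N}$ modulo an $O(h^N)$-family, this equals $\tr\bigl(Op_h(a_l)\Phi_{l,h,N}Op_h(b_m)\Phi_{l^{-1},h,N}\Phi_{lm,h,N}\bigr)$ modulo $h^{N-\dim M}$: the error terms are $O(h^N)$-families composed with the uniformly bounded families $Op_h(a_l)$ and $\Phi_{lm,h,N}$ (Lemma~\ref{mapping}), hence $O(h^N)$-families, and their traces are $O(h^{N-\dim M})$ by Lemma~\ref{lemma-trace}. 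Since $\ord a_l+\ord b_m<-2\dim M$, every cyclic rearrangement of this product is of trace class, so I may cycle $\Phi_{l^{-1},h,N}\Phi_{lm,h,N}$ to the front and apply Proposition~\ref{fio2} to $\Phi_{l^{-1},h,N}\Phi_{lm,h,N}Op_h(a_l)$, with $l^{-1}(lm)=m$, replacing it by $\Phi_{m,h,N}Op_h(a_l)$ up to an $O(h^N)$-family. One more use of cyclicity shows that the $(l,m)$-contribution to $\tau_{g,N}(a*b)$ is $\tr\bigl(Op_h(a_l)\Phi_{l,h,N}Op_h(b_m)\Phi_{m,h,N}\bigr)$ modulo $h^{N-\dim M}$.

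Running the identical computation for $\tau_{g,N}(b*a)$ — now $Op_h(\varphi_m(a_l))\equiv\Phi_{m,h,N}Op_h(a_l)\Phi_{m^{-1},h,N}$, then cyclicity, Proposition~\ref{fio2} applied to $\Phi_{m^{-1},h,N}\Phi_{ml,h,N}Op_h(b_m)$ with $m^{-1}(ml)=l$, and cyclicity again — gives that the $(l,m)$-contribution to $\tau_{g,N}(b*a)$ equals the very same expression $\tr\bigl(Op_h(a_l)\Phi_{l,h,N}Op_h(b_m)\Phi_{m,h,N}\bigr)$ modulo $h^{N-\dim M}$. Finally, $lm$ and $ml$ are conjugate in $G$, so $lm\in\langle g\rangle$ if and only if $ml\in\langle g\rangle$ (and $\dim T^*M^{lm}=\dim T^*M^{ml}$); hence the two sums over conjugacy classes run over the same set of pairs $(l,m)$ and agree term by term modulo $h^{N-\dim M}$. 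Letting $N\to\infty$ yields $\tau_g(a*b)=\tau_g(b*a)$.

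The computation is essentially bookkeeping once the machinery of Sections~\ref{sec2}--\ref{sec5} is in place; the one step where care is needed is the simultaneous, uniform choice of the cut-off parameter $\varepsilon$, so that all the finitely many support hypotheses of Proposition~\ref{fio2} — vanishing of the relevant $a_l$ or $b_m$ on the sets $\{|\xi|<2\varepsilon\}$ and $C_\bullet^{-1}\{|\xi|<2\varepsilon\}$ — are met at once and are compatible with the definition of $\tau_{g,N}$, together with checking that each intermediate error genuinely is an $O(h^N)$-family so that Lemma~\ref{lemma-trace} removes it in the quotient. I do not anticipate a substantive new difficulty beyond this.
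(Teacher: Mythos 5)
Your proposal is correct and follows essentially the same route as the paper's proof: reduce to individual components $a_l$, $b_m$, use Egorov's theorem to identify $Op_h(a_l*\varphi_l(b_m))$ with $Op_h(a_l)\Phi_{l,h,N}Op_h(b_m)\Phi_{l^{-1},h,N}$ modulo $O(h^N)$-families, absorb $\Phi_{l^{-1},h,N}\Phi_{lm,h,N}$ into $\Phi_{m,h,N}$ via Proposition~\ref{fio2}, and conclude by cyclicity of the operator trace together with Lemma~\ref{lemma-trace} and the conjugacy of $lm$ and $ml$. The only (immaterial) differences are that the paper reduces to single nonzero components rather than summing over pairs, and applies the left-multiplication version of Proposition~\ref{fio2} directly instead of cycling first.
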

\begin{proof}
It suffices to prove the trace property for $a$ and $b$ with only one nonzero component denoted by $a_l$ and $b_k$,
where $lk\in \langle g\rangle.$  Then $a*b$  also has only one nonzero component denoted by $(a*b)_{lk}$.

By the definitions of the star product and the crossed product 
and Theorem~\ref{Egorov}, we have
$$
(a*b)_{lk}\equiv a_l*\sigma(\Phi_{l,\varepsilon,N}Op_h(b_k)\Phi^{-1}_{l,\varepsilon,N})
$$
modulo symbols of $O(h^N)$-families. Hence, we obtain for the corresponding operators
\begin{equation}\label{oppa1}
Op_h\bigl[(a*b)_{lk}\bigr]\equiv Op_h(a_l)\Phi_{l,\varepsilon,N}Op_h(b_k)\Phi^{-1}_{l,\varepsilon,N}\mod O(h^N)\text{-families}. 
\end{equation}
 
We can now compute the trace functionals:
\begin{multline}
 \tau_{g}(a*b)\equiv \tr(Op_h\bigl[(a*b)_{lk}\bigr]\Phi_{lk,\varepsilon,N})\equiv
 \tr(Op_h(a_l)\Phi_{l,\varepsilon,N}Op_h(b_k)\Phi^{-1}_{l,\varepsilon,N}\Phi_{lk,\varepsilon,N}) \\
 \equiv \tr(Op_h(a_l)\Phi_{l,\varepsilon,N}Op_h(b_k) \Phi_{k,\varepsilon,N})\equiv
 \tr(Op_h(b_k) \Phi_{k,\varepsilon,N}Op_h(a_l)\Phi_{l,\varepsilon,N})\equiv \tau_{g}(b*a).
\end{multline}
Here all comparisons are modulo $h^{N-\dim M}$: The first is the definition of $\tau_g$,
the second follows from \eqref{oppa1}, the third follows from the composition formula in Proposition~\ref{fio2}, 
the fourth follows since the operator trace has the trace property, while the final comparison follows from all previous
comparisons, if we consider $b*a$ instead of $a*b$.
This completes the proof. 
\end{proof}

\paragraph{\bf Algebraic indices.} Let $g\in G$ be an element of finite order. 

\begin{definition}\label{algind1}
Given an  elliptic symbol $a\in \mathbb{B}$, its {\em algebraic index localized at the conjugacy
class} $\langle g\rangle\subset G$  is defined as
\begin{equation}\label{eq-algind4}
 \widetilde{\ind}_{g,N} a=\tau_{g}(1-r_N*a)-\tau_{g}(1-a*r_N)=\tau_{g}[a,r_N]\in \left(h^{-\dim T^*M^g/2} \mathbb{C}[h]\right)/h^{N-\dim M} ,
\end{equation}
where $r$ is an almost-inverse symbol for $a$ such that \eqref{eq-ai2} holds.
\end{definition}
The algebraic index \eqref{eq-algind4} is independent of the choice of the almost-inverse symbol $r_N$
and the algebraic indices for different $N$ are compatible
$$
 \widetilde{\ind}_{g,N} a\equiv \widetilde{\ind}_{g,N+1}a \mod h^{N-\dim M}.
$$
They  define the algebraic index as $N\to \infty$
\begin{equation}\label{eq-algind4a}
 \widetilde{\ind}_{g} a\in h^{-\dim T^*M^g/2} \mathbb{C}[[h]].
\end{equation}

\begin{proposition}\label{p-5}
We have
 \begin{equation}\label{proj0a}
  \widetilde\ind_g a\equiv \tau_{g}(w_a*p_0*w_a^{-1}-p_0)\mod h^{N-\dim M},
 \end{equation}
 where 
 \begin{equation}\label{proj1a}
   w_a=\left(%
          \begin{array}{cc}
             (2-a*r_N)*a  & 1-a*r_N \\
             r_N*a-1 & r_N \\
          \end{array}%
       \right)
,\quad  p_0=\left(
           \begin{array}{cc}
             1  & 0 \\
             0  & 0          
           \end{array}
         \right),
 \end{equation}
  while $r_N$ is an almost-inverse symbol as in \eqref{eq-ai2}.
\end{proposition}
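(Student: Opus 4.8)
The plan is to follow the proof of part~2) of Proposition~\ref{prop6}, transplanting the argument from the level of operators to the level of symbols; the ring-theoretic skeleton is identical. First I would record that $w_a$, as defined in \eqref{proj1a}, is invertible in the matrix algebra $M_2(\mathbb{B})$, with two-sided inverse
\[
 w_a^{-1}=\left(\begin{array}{cc} r_N & r_N*a-1 \\ 1-a*r_N & a*(2-r_N*a)\end{array}\right),
\]
exactly as in the formula for $W_D^{-1}$ in Proposition~\ref{prop6}. This is a purely algebraic identity valid in any unital ring; it uses nothing about $\mathbb{B}$ beyond associativity and the presence of a unit, and in particular does not require $1-a*r_N$ or $1-r_N*a$ to be small. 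Consequently $w_a*p_0*w_a^{-1}$ is a well-defined idempotent in $M_2(\mathbb{B})$.

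Next I would compute $w_a*p_0*w_a^{-1}-p_0$ by expanding the matrix product. Its off-diagonal entries lie in $\mathbb{B}_N$, and its diagonal entries are $(2-a*r_N)*a*r_N-1$ and $(r_N*a-1)*(r_N*a-1)$; writing $p=1-a*r_N$ and $q=1-r_N*a$, which lie in $\mathbb{B}_N$, these equal $-p*p$ and $q*q$, hence lie in $\mathbb{B}_{2N}$. Since $N>2\dim M$, applying $\tau_g$ to this matrix --- that is, $\tau_g$ of the sum of the two diagonal entries --- is legitimate, and by the same elementary identity used in part~2) of Proposition~\ref{prop6},
\[
 \tau_g\bigl(w_a*p_0*w_a^{-1}-p_0\bigr)
 =\tau_g\bigl((2-a*r_N)*a*r_N-1+(r_N*a-1)*(r_N*a-1)\bigr)
 =\tau_g[a,s],
\]
where $s:=2r_N-r_N*a*r_N$.

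Finally, I would observe that $s$ is again an almost-inverse for $a$ in the sense of \eqref{eq-ai2}: indeed $1-a*s=(1-a*r_N)*(1-a*r_N)=p*p\in\mathbb{B}_{2N}\subset\mathbb{B}_N$ and, likewise, $1-s*a=q*q\in\mathbb{B}_N$. Since the localized algebraic index $\widetilde{\ind}_{g,N}a=\tau_g[a,r_N]$ is independent of the choice of almost-inverse symbol (as noted after Definition~\ref{algind1}), we conclude that $\tau_g[a,s]\equiv\tau_g[a,r_N]\equiv\widetilde{\ind}_g a\mod h^{N-\dim M}$, which is \eqref{proj0a}. Should one prefer not to invoke that independence, one can argue directly: $s-r_N=r_N*(1-a*r_N)=r_N*p\in\mathbb{B}_N$, so $[a,s]-[a,r_N]=[a,r_N*p]$, and because $\ord a+\ord(r_N*p)\le -N<-2\dim M$ the trace property of $\tau_g$ (Proposition~\ref{pro77}) forces $\tau_g(a*(r_N*p))=\tau_g((r_N*p)*a)$, i.e.\ $\tau_g[a,r_N*p]=0$.

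I do not anticipate a genuine obstacle here. The two points requiring care are: (i) checking that the $2\times 2$ algebraic identity for $w_a^{-1}$ and for the diagonal sum holds verbatim over $\mathbb{B}$, which is a routine verification; and (ii) the bookkeeping of symbol orders together with the truncation modulo $h^{N-\dim M}$, which is kept under control throughout by the standing assumption $N>2\dim M$, so that all products occurring in the computation have order $<-2\dim M$ and therefore lie in the domain where $\tau_g$ is defined and is a trace (Proposition~\ref{pro77}). The substantive work --- that $\tau_g$ is a trace and that the localized algebraic index is well defined --- has already been carried out.
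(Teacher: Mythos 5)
Your proposal is correct and follows essentially the same route as the paper: explicit inversion of $w_a$, reduction of $\tau_g(w_a*p_0*w_a^{-1}-p_0)$ to $\tau_g[a,2r_N-r_N*a*r_N]$, and the observation that $2r_N-r_N*a*r_N$ is again an almost-inverse so the independence of the localized algebraic index from that choice finishes the argument. Your extra direct verification via the trace property of Proposition~\ref{pro77} is a harmless (and slightly more self-contained) addition, but not a different method.
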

\begin{proof}
Property \eqref{proj0a} follows by a direct computation. Indeed,  $w_a$ is invertible 
in $\mathbb{B}$ and the inverse is equal to
$$
w_a^{-1}=\left(%
          \begin{array}{cc}
             r_N & r_N*a-1 \\
             1-a*r_N & a*(2-r_N*a) \\
          \end{array}%
       \right).
$$
Then we calculate the right-hand side in \eqref{proj0a} and obtain
\begin{multline*}
\tau_{g}(w_a*p_0*w_a^{-1}-p_0)= \tau_{g}((2 -a*r_N)*a*r_N+(r_N*a-1)^2-1 )=\\
= \tau_{g}  [a,2r_N-r_N*a*r_N] \equiv \widetilde\ind_g a \mod h^{N-\dim M}.
\end{multline*}
Here we used the fact that $2r_N-r_N*a*r_N$ is an  almost-inverse for  $a$ and that $\widetilde \ind_g a$ does not depend on the choice of the  almost-inverse symbol. 
\end{proof}

\paragraph{\bf Analytic and algebraic indices are equal.}
  
Given an elliptic symbol $a\in\mathbb{B}$, we define the semiclassical $G$-operator
  (cf.~\eqref{eq-6})
$$
Op_h(a)=\sum_{l\in G} Op_h(a_{l})\Phi_{l,h,\varepsilon,N}.
$$ 
This Fredholm family is constant in $h$ modulo infinitely smoothing operators. Hence, 
its analytic index $\ind_g Op_h(a)$ (see \eqref{eq-indg1}) is constant in $h$ by Proposition~\ref{prop6}. On the other hand, our elliptic symbol 
$a$ has algebraic index localized at $g$
$$
 \widetilde{\ind}_g a\in h^{-\dim T^*M^g/2} \mathbb{C}[[h]].
$$

The following theorem is the main result of this paper.
\begin{theorem}\label{th-1}
Given a finite order element $g\in G$, the algebraic index localized at $g$ has no negative and no positive powers of $h$, and its constant term is equal to the analytic index:
\begin{equation}\label{eq-7}
  \ind_g Op_h(a)=\left(\widetilde{\ind}_g a\right)\Bigr|_{h=0}.
\end{equation}
\end{theorem}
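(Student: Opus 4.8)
The plan is to lift the algebraic almost-inverse of $a$ to an operator almost-inverse of $Op_h(a)$ and then to match the two indices coefficient by coefficient in powers of $h$. Fix $N>2\dim M$ and choose, as in \eqref{eq-ai2}, an almost-inverse symbol $r_N\in\mathbb{B}$ for $a$, so that $1-a*r_N$ and $1-r_N*a$ lie in $\mathbb{B}_N$; both have only finitely many components, each of order $\le-N$. Set $D=Op_h(a)$ and $R=Op_h(r_N)$, which for every fixed $h$ are $G$-operators. First I would run the composition formula (Proposition~\ref{fio2}) and Egorov's theorem (Theorem~\ref{Egorov}) through the finite sums defining $D$ and $R$, exactly as in the chain of congruences in the proof of Proposition~\ref{pro77}, to obtain
\[
DR\equiv Op_h(a*r_N),\qquad RD\equiv Op_h(r_N*a)\qquad\text{modulo }O(h^N)\text{-families},
\]
once $\varepsilon$ is taken small enough relative to the sets on which the symbols involved vanish. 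Hence $1-DR=Op_h(1-a*r_N)+E_h$ and $1-RD=Op_h(1-r_N*a)+F_h$, where $E_h,F_h$ are $O(h^N)$-families and the two $Op_h$-terms have all components of order $\le-N<-\dim M$. Since $N>\dim M$, Lemma~\ref{lemma-trace}(2) shows that the $\langle g\rangle$-components of all four summands are of trace class, so $\Tr_g(1-RD)-\Tr_g(1-DR)$ is defined and, by the same standard argument that proves the well-definedness in Proposition~\ref{prop6} (which uses only the trace property of $\Tr_g$ together with trace-classness of the errors), equals $\ind_g D$.

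Next I would evaluate this quantity in $\bigl(h^{-\dim T^*M^g/2}\mathbb{C}[h]\bigr)/h^{N-\dim M}$. By \eqref{eq-6}, $\Tr_g\bigl(Op_h(c)\bigr)=\tau_{g,N}(c)$ for any crossed-product symbol $c$ of order $\le-N<-2\dim M$, so the $Op_h$-parts contribute $\tau_{g,N}(1-r_N*a)-\tau_{g,N}(1-a*r_N)=\widetilde{\ind}_{g,N}a$ in the sense of Definition~\ref{algind1}. By Lemma~\ref{lemma-trace}(2), $\Tr_g(E_h)$ and $\Tr_g(F_h)$ are $O(h^{N-\dim M})$, hence vanish in the quotient. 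This yields the basic congruence
\[
\ind_g D\equiv\widetilde{\ind}_{g,N}a\pmod{h^{N-\dim M}}.
\]

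Finally I would read off the statement. The left-hand side is a fixed complex number, independent of $h$ (as recalled just before the theorem), whereas $\widetilde{\ind}_{g,N}a$ is a Laurent polynomial all of whose exponents lie in $\bigl[-\dim T^*M^g/2,\ N-\dim M\bigr)$; a Laurent polynomial in that range which is congruent to a constant modulo $h^{N-\dim M}$ must have vanishing coefficients at every nonzero power of $h$ and constant term equal to that constant. Letting $N\to\infty$ and using the compatibility of the $\widetilde{\ind}_{g,N}a$, the formal series $\widetilde{\ind}_g a$ collapses to the single constant $\ind_g D=\ind_g Op_h(a)$, which is precisely \eqref{eq-7}. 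I expect the main obstacle to be the first step: showing that $R=Op_h(r_N)$ is a bona fide almost-inverse of $D=Op_h(a)$ and that $DR$, $RD$ coincide with $Op_h(a*r_N)$, $Op_h(r_N*a)$ modulo $O(h^N)$-families. This forces one to carry the cut-off parameter $\varepsilon$ simultaneously through the composition formula and Egorov's theorem so that they apply to every pair of components in the finite sums defining $D$ and $R$ (including the adjoined unit), and to check that the resulting $O(h^N)$-remainders, although not classical pseudodifferential operators at fixed $h$, are still of trace class on the conjugacy class $\langle g\rangle$ — which is exactly what Lemma~\ref{lemma-trace}(2) supplies.
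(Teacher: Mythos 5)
Your argument is correct and follows essentially the same route as the paper's: identify $Op_h$ of star-products with compositions of the semiclassical operators via Proposition~\ref{fio2} and Theorem~\ref{Egorov}, dispose of the $O(h^N)$-errors in the localized trace via Lemma~\ref{lemma-trace}, and compare the $h$-independent analytic index with the truncated Laurent polynomial to force all nonconstant coefficients to vanish. The only (cosmetic) difference is that you work directly with the commutator formulas \eqref{eq-indg1} and \eqref{eq-algind4}, whereas the paper passes through the equivalent conjugated-projection expressions of Propositions~\ref{prop6} and~\ref{p-5}.
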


\begin{proof}
Given $N>2\dim M$, we consider the element  (see \eqref{proj1a})
$$
w_a=\left(%
          \begin{array}{cc}
             (2-a*r_N)*a  & 1-a*r_N\\
             r_N*a-1 & r_N \\
          \end{array}%
       \right)\in \mathbb{B},
$$
where an almost inverse symbol $r_N$ is chosen such that \eqref{eq-ai2} holds.

Denote for brevity $A=Op_h(a)$ and similarly $R=Op_h(r_N)$.
Then the analytic index of $A$ is independent of $h$, and by  Proposition~\ref{prop6}  we have for the analytic index
\begin{equation}\label{eq-7a}
 \ind_g A=\Tr_g(W_AP_0 W_A^{-1}-P_0), 
\end{equation}
where (cf.~\eqref{proj1})
$$
 W_A=\left(%
          \begin{array}{cc}
             (2-AR)A  & 1-AR \\
             RA-1 & R \\
          \end{array}%
       \right).
$$
By the definition of the $*$-product in $\mathbb{B}$ the difference
$$
 W_AP_0 W_A^{-1}- Op_h(w_a*p_0*w_a^{-1})
$$
is an $O(h^N)$-family. Further, we obtain the comparison
\begin{equation}\label{eq-7b}
 \Tr_g(W_AP_0 W_A^{-1}-P_0)\equiv \tau_{g}(w_a*p_0*w_a^{-1}-p_0) \mod h^{N-\dim M}.
\end{equation}
by the definition of  the trace $\tau_{g}$ in \eqref{eq-6} and Lemma~\ref{lemma-trace}.
Finally, the right hand side in \eqref{eq-7b} is equal to the algebraic index $\mod  h^{N-\dim M}$ by Proposition~\ref{p-5}:
\begin{equation}\label{eq-7c}
 \tau_{g}(w_a*p_0*w_a^{-1}-p_0)\equiv  \widetilde{\ind}_g a \mod h^{N-\dim M}.
\end{equation}
Therefore, equalities \eqref{eq-7a}, \eqref{eq-7b} and \eqref{eq-7c} imply that the algebraic index has only the constant term and is equal to the
analytic index, i.e., we obtain the desired formula \eqref{eq-7}.

This ends the proof of Theorem~\ref{th-1}.
\end{proof}

\paragraph{\bf The Fredholm index.}

Theorem~\ref{th-1} treats indices localized at torsion elements of the group.
It turns out that for some infinite order elements of the group the localized index is always equal to zero.
More precisely, the following vanishing result holds, cf.~Proposition~9.4 in~\cite{NaSaSt17}.
\begin{proposition}\label{prop7}
 Given an elliptic operator $D$ and $g_0\in G$, we have
 $$
  \ind_{g_0} D=0, 
 $$
 whenever there exists a group homomorphism $\chi:G\to \mathbb{Z}$  such that $\chi(g_0)\ne 0.$
\end{proposition}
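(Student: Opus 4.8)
The plan is to use $\chi$ to deform $D$ within the class of elliptic $G$-operators along a path for which the localized index gets multiplied by the phase $e^{it\chi(g_0)}$, and then to invoke homotopy invariance (Proposition~\ref{prop6}(3)) to force that phase to be trivial. Concretely, $\chi$ gives rise to a \emph{gauge twist}: since $\chi(gh)=\chi(g)+\chi(h)$ and $\chi(e)=0$, the assignment $\sum_l A_l\Phi_l\mapsto\sum_l e^{it\chi(l)}A_l\Phi_l$ extends, for each $t\in\mathbb R$, to a unital algebra automorphism $\alpha_t$ of the algebraic crossed product $\Psi(M)\rtimes G$, and likewise of $C^\infty(S^*M)\rtimes G$; because $\alpha_t$ only rescales components by nonzero scalars it preserves the order filtration, in particular it maps $\Psi^{-N}(M)\rtimes G$ to itself. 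Hence the twisted operator
\begin{equation*}
  D_t=\sum_g e^{it\chi(g)}D_g\Phi_g
\end{equation*}
is again an elliptic $G$-operator for every $t$, with principal symbol $\alpha_t(\sigma_{pr}(D))$; the family $\{D_t\}$ is smooth in $t$ with $D_0=D$.

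Next I would lift the parametrix. Fixing $N>\dim M$ and an almost-inverse $R\in\Psi^0(M)\rtimes G$ as in \eqref{eq-ai1}, put $R_t=\alpha_t(R)$. Since $\alpha_t$ is a unital algebra homomorphism, $1-D_tR_t=\alpha_t(1-DR)$ and $1-R_tD_t=\alpha_t(1-RD)$, both lying in $\Psi^{-N}(M)\rtimes G$; so $\{R_t\}$ is a smooth family of almost-inverses for $\{D_t\}$, and Proposition~\ref{prop6}(3) shows $t\mapsto\ind_{g_0}D_t$ is constant.

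Finally I would evaluate this constant. By \eqref{eq-indg1} and the trace property of $\Tr_{g_0}$ we have $\ind_{g_0}D_t=\Tr_{g_0}[D_t,R_t]=\Tr_{g_0}(\alpha_t([D,R]))$. Writing $[D,R]=\sum_l X_l\Phi_l$ and using the definition \eqref{tau1} of $\Tr_{g_0}$ together with the fact that $\chi$, valued in the abelian group $\mathbb Z$, is constant on each conjugacy class, so that $\chi(l)=\chi(g_0)$ for all $l\in\langle g_0\rangle$, we obtain
\begin{equation*}
  \Tr_{g_0}\bigl(\alpha_t([D,R])\bigr)=\sum_{l\in\langle g_0\rangle}e^{it\chi(l)}\tr(X_l\Phi_l)=e^{it\chi(g_0)}\,\ind_{g_0}D .
\end{equation*}
Together with the $t$-independence this yields $e^{it\chi(g_0)}\ind_{g_0}D=\ind_{g_0}D$ for all $t$; since $\chi(g_0)\ne0$ one may choose $t$ with $e^{it\chi(g_0)}\ne1$, forcing $\ind_{g_0}D=0$. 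I expect the only point requiring real care to be the bookkeeping that keeps the twist $\alpha_t$ inside the \emph{algebraic} crossed products and order-preserving, so that $D_t$ is elliptic in the sense defined above and $R_t$ is an honest almost-inverse with the same $N$; this is precisely what lets us apply Proposition~\ref{prop6}(3), after which the rest is a direct computation.
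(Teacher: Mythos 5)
Your proposal is correct and follows essentially the same route as the paper: both twist $D$ and its parametrix by the automorphisms $\sum_l A_l\Phi_l\mapsto\sum_l e^{it\chi(l)}A_l\Phi_l$, invoke the homotopy invariance of Proposition~\ref{prop6}(3), and observe that $\Tr_{g_0}$ picks up the factor $e^{it\chi(g_0)}$, forcing the localized index to vanish. Your added remarks (that $\chi$ is constant on conjugacy classes and that the twist preserves the order filtration) are exactly the implicit bookkeeping in the paper's argument.
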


\begin{proof}
 We use $\chi$ to define the family of unital automorphisms 
 $$
 U_t\in {\rm Aut}(\Psi(M)\rtimes G),\quad U_t\{D_g\}=\{e^{it\chi(g )}D_g\}, \quad t\in [0,2\pi].
 $$
Given 
an elliptic operator $D$ with almost-inverse $R$, the homotopies $D_t=U_t(D), R_t=U_t(R)$
satisfy the assumptions of Proposition~\ref{prop6}, and hence the index of this family does not depend on $t$:
\begin{equation}\label{eq81}
\ind_{g_0} D_t=\ind_{g_0} D.
\end{equation}
On the other hand, by the definition of the localized index we have
 \begin{eqnarray}\label{eq82}
\lefteqn{\ind_{g_0} D_t=\Tr_{g_0} [D_t,R_t]\nonumber}\\
&=&\Tr_{g_0} U_t([D,R])=e^{it\chi(g_0)} \Tr_{g_0} [D,R]=e^{it\chi(g_0)}\ind_{g_0} D.
 \end{eqnarray}
 It now follows from \eqref{eq81} and \eqref{eq82} and our condition $\chi(g_0)\ne 0$ that $\ind_{g_0} D=0$.
\end{proof}
The conditions of this proposition are satisfied for  all infinite-order elements in finite extensions of Abelian groups (in particular, for all finite groups and Abelian groups).
Hence, we obtain  the following corollary from Propositions~\ref{prop6},~\ref{prop7} and Theorem~\ref{th-1}.
\begin{corollary}
 Given an elliptic symbol $a\in C^\infty(S^*M)\rtimes  G$, where $G$ is a finite extension of an Abelian group, the Fredholm index of the corresponding $G$-operator denoted by $A$
 is equal to the sum of localized algebraic indices over torsion conjugacy classes in $G$:
\begin{equation}\label{eq83}
\ind A= \sum_{\langle g\rangle\subset {\rm Tor}\;G} \left(\widetilde{\ind}_g a\right)\Bigr|_{h=0}.
\end{equation}
Here ${\rm Tor}\;G$ is the torsion subgroup of $G$.
\end{corollary}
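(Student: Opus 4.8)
The plan is to combine the three cited results, using a short group-theoretic observation to discard the non-torsion part of the resulting sum. First I would apply the first part of Proposition~\ref{prop6}: since $a$ has order zero, $A$ is Fredholm on $H^{s}(M)$ and
\begin{equation*}
 \ind A=\sum_{\langle g\rangle\subset G}\ind_g A ,
\end{equation*}
the sum ranging over all conjugacy classes of $G$, with each $\ind_g A$ depending only on the principal symbol $a$.

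Next I would use the hypothesis that $G$ is finite-by-Abelian, i.e., that it has a finite normal subgroup $F$ with $G/F$ Abelian; since $G$ is finitely generated, $G/F\cong\mathbb{Z}^{k}\oplus T$ with $T$ finite. Write $\pi:G\to G/F$ for the quotient map. If $g\in G$ has infinite order then so does $\pi(g)$, because otherwise $g^{n}\in F$ for some $n\ge 1$ and hence $g^{n|F|}=e$; thus the $\mathbb{Z}^{k}$-component of $\pi(g)$ is nonzero, and composing $\pi$ with a coordinate projection $\mathbb{Z}^{k}\oplus T\to\mathbb{Z}$ produces a homomorphism $\chi:G\to\mathbb{Z}$ with $\chi(g)\neq 0$. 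By Proposition~\ref{prop7} we get $\ind_g A=0$. The same computation shows that $g$ is torsion exactly when $\pi(g)\in T$, so $\Tor G=\pi^{-1}(T)$ is a finite normal subgroup, and, since conjugation preserves the order of an element, a conjugacy class lies in $\Tor G$ precisely when it is the class of a torsion element. Hence the sum above collapses to $\ind A=\sum_{\langle g\rangle\subset\Tor G}\ind_g A$.

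Finally, for each torsion conjugacy class I would invoke Theorem~\ref{th-1}. Lifting $a$ to an elliptic element of $\mathbb{B}=\mathbb{A}'\rtimes G$ with leading symbol $a$, the semiclassical $G$-operator $Op_h(a)=\sum_{l}Op_h(a_{l})\Phi_{l,h,\varepsilon,N}$ is, for every $h$, an elliptic $G$-operator with principal symbol $a$, so by the homotopy invariance in Proposition~\ref{prop6} its localized index coincides with that of $A$, that is, $\ind_g A=\ind_g Op_h(a)$; and Theorem~\ref{th-1} identifies this number with $\left(\widetilde{\ind}_g a\right)\big|_{h=0}$, which in particular depends only on the leading symbol and so is well defined for $a\in C^\infty(S^*M)\rtimes G$. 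Substituting into the previous display yields exactly~\eqref{eq83}.

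The proof is largely assembly once Theorem~\ref{th-1} is available. The step I expect to require genuine care, and the only one where the hypothesis on $G$ enters, is the reduction to torsion classes: one must check that in a finite-by-Abelian group the torsion elements form a finite normal subgroup and that every element of infinite order is detected by a homomorphism to $\mathbb{Z}$, so that Proposition~\ref{prop7} applies. A further point that deserves attention is the identification, in the last step, of the analytic index of the given $G$-operator $A$ with that of its semiclassical deformation $Op_h(a)$; I would deduce it from the third part of Proposition~\ref{prop6} by joining $A$ to $Op_h(a)$ through the affine family of elliptic $G$-operators that share the principal symbol $a$.
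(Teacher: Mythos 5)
Your proposal is correct and follows essentially the same route as the paper, which derives the corollary directly from Proposition~\ref{prop6}(1), Proposition~\ref{prop7} and Theorem~\ref{th-1}. You additionally spell out two points the paper leaves implicit --- that in a finite-by-Abelian group every infinite-order element is detected by a homomorphism to $\mathbb{Z}$, and that $\ind_g A=\ind_g Op_h(a)$ via the homotopy invariance of Proposition~\ref{prop6}(3) --- both of which are handled correctly.
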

\begin{remark}
 Formula~\eqref{eq83} also holds for torsion free groups of polynomial growth.  The proof follows from the equality $\ind D=\ind_e D$,
 which can be obtained as in~\cite{SaSt23}.
\end{remark}

\end{document}